\date{\today}
\title[Analytic torsion of cones]{The analytic torsion of the finite metric cone over a compact manifold}
\thanks{2010 {\em Mathematics Subject Classification: 58J52}.\\
}
\author{L. Hartmann  and M. Spreafico}
\address[Luiz Hartmann]{\tt UFSCar, Universidade Federal de S\~{a}o Carlos,  Brazil.  Partially supported by CNPq and FAPESP 2013/04396-6}
\email{hartmann@dm.ufscar.br}
\address[Mauro Spreafico]{\tt Dipartimento di matematica e fisica, Universit\`a del Salento, Lecce, Italia.}
\email{mauro.spreafico@unisalento.it}
\numberwithin{equation}{section}
\newtheorem{theo}{Theorem}[section]
\newtheorem{lem}{Lemma}[section]
\newtheorem{corol}{Corollary}[section]
\newtheorem{defi}{Definition}[section]
\newtheorem{prop}{Proposition}[section]
\newtheorem{rem}{Remark}[section]
\newcommand{\beq}{\begin{equation}}
\newcommand{\eeq}{\end{equation}}
\renewcommand{\Re}{{\rm Re}}
\renewcommand{\b}{{\partial}}
\newcommand{\rk}{{\rm rk}}
\newcommand{\da}{{\dagger}}
\renewcommand{\det}{{\rm det\hspace{1pt}}}
\newcommand{\bu}{{\bullet}}
\newcommand{\Sp}{{\rm Sp}}
\newcommand{\sgn}{{\rm sgn}}
\newcommand{\Det}{{\rm Det\hspace{1pt}}}
\newcommand{\Z}{{\mathds{Z}}}
\newcommand{\R}{{\mathds{R}}}
\newcommand{\C}{{\mathds{C}}}
\newcommand\e{{\rm e}}
\newcommand{\es}{\mathsf{e}}
\newcommand{\gs}{\mathsf{g}}
\newcommand{\vs}{\mathsf{v}}
\newcommand{\alphas}{\bm{\alpha}}
\renewcommand{\P}{\mathcal{P}}
\renewcommand{\H}{\mathcal{H}}
\newcommand{\T}{{\mathcal{T}}}
\renewcommand{\SS}{{\mathcal{S}}}
\renewcommand{\AA}{\mathcal{A}}
\DeclareMathOperator*{\As}{A\hspace{-5.5pt}\slash}
\newcommand{\A}{{\As}}
\DeclareMathOperator*{\Bs}{B\hspace{-5.5pt}\slash}
\newcommand{\B}{{\Bs}}
\DeclareMathOperator*{\Rz}{Res_0}
\DeclareMathOperator*{\Ru}{Res_1}
\begin{document}


\maketitle

\begin{abstract}  We give an explicit formula for the $L^2$ analytic torsion of the finite metric cone over an oriented compact connected Riemannian manifold. 
We provide an interpretation of the different factors appearing in this formula. We prove that the analytic torsion of the cone is the finite part of the limit obtained collapsing one of the boundaries, of the ratio of the analytic torsion of the frustum to a regularising factor. We show that the regularising factor comes from the set of the non square integrable eigenfunctions of  the Laplace Beltrami operator on the cone.
\end{abstract}


\section{Introduction and statement of the main result}
\label{s0}

Let $(M,g)$ be  a compact connected oriented Riemannian manifold  without boundary of dimension $n$ with metric $g$. Let $\Delta$ denotes the Hodge-Laplace operator  on $M$ in the metric $g$. Then, $\Delta$ has a non  negative discrete spectrum $\Sp \Delta$ and the {\it zeta function} of $\Delta$ is well defined by 
\[
\zeta(s,\Delta^{(q)})=\sum_{\lambda\in \Sp_+\Delta^{(q)}} \lambda^{-s},
\]
for $\Re(s)>\frac{n}{2}$, and by analytic continuation elsewhere, and is regular at $s=0$. The {\it analytic torsion} $T(M,g)$ of the pair $(M,g)$ is  defined by
\[
\log T(M,g)=\frac{1}{2}\sum_{q=1}^{n} (-1)^q q \zeta'(0,\Delta^{(q)}).
\]

If the manifold $M$ has a boundary $\b M$, the Laplace operator is assumed to be defined by suitable  boundary conditions BC \cite[Section 3]{RS}. In such a case, it is convenient to split the logarithm of the analytic torsion into two parts, the first being a global term and the second a local one, defined on a neighborhood of the boundary \cite[Section 3]{Che1}.  The first term coincides with the Reidemeister torsion \cite{Mil} of either $M$ of the pair $(M,\b M)$, with the Ray and Singer homology basis \cite{RS}, by the Cheeger M\"{u}ller theorem \cite{Che1} \cite{Mul}, so
\beq\label{e000}
\begin{aligned}
\log T_{\rm abs}(M,g)&=\log \tau(M,g)+\log T_{\rm bound, abs}(\b M),\\
\log T_{\rm rel}(M,g)&=\log \tau((M,\b M),g)+\log T_{\rm bound, rel}(\b M),
\end{aligned}
\eeq
the second term splits as
\[
\log T_{\rm bound, BC}(\b M)=\frac{1}{4}\chi(\b M)\log 2+A_{\rm BM, BC}(\b M),
\]
where $\chi$ is the Euler characteristic, and the last term is called the anomaly boundary term, and was described in the more general case in \cite{BM1} and \cite{BM2}.

It is clear that what is necessary in order to define the analytic torsion $T(M,g)$ is that the spectrum of the Hodge-Laplace operator satisfies some assumptions that guarantee the possibility of defining the zeta function and of proving its regularity at $s=0$. It is also clear that this follows by some spectral properties of the Hodge-Laplace operator. There are several approaches to describe these properties. We will follow the one of Spreafico, introduced in \cite{Spr4} and \cite{Spr9}: so we require  that $\Sp_+(\Delta)$ is a graded regular sequence of spectral type (of non positive order), as defined in \cite[Definitions 2.1, 2.6]{Spr9}. The fact that this is true for the Hodge-Laplace operator on a compact manifold is well know.

The given definition of the analytic torsion 
extends easily  considering forms with values in some vector bundle $V_\rho$ associated to some orthogonal representation $\rho$ of the fundamental group of $W$ \cite[Section 1]{RS}. Under our approach what is necessary is that the spectrum of the resulting operator is a regular sequence of spectral type of non positive order, and again this is well known. Since the results of this paper are independent of these extensions, we will consider the simpler case of the Hodge-Laplace operator itself.

An other possible generalization of the given definition of analytic torsion, and this is the case that we will consider here,  is when the underlying space is no longer a compact manifold, but some type of open manifold, or manifold with singularities. In this paper we consider the case of the cone over a manifold $C(W)$, as defined below. 

\begin{defi} \label{metric} Let $(W^{(m)},g)$ be an oriented compact connected Riemannian manifold of dimension $m$ without boundary with metric $g$. 
Let $0\leq l_1\leq l_2$ be real numbers. Consider the space $C_{[l_1,l_2]}(W)=[l_1,l_2]\times W$ with the  metric (defined for $x>0$ when $l_1=0$)
\[
dx\otimes dx+x^2 g.
\]

We call $C_{[l_1>0,l_2]}(W)$ the {\it finite metric frustum} over $W$;   we call $C_{(0,l]}(W)$ the {\it finite metric cone} over $W$, and we denote it by $C_l(W)$.


\end{defi}

It is clear that in order to obtain a suitable extension of the definition of the analytic torsion to the cone, we need first a suitable definition of the Hodge-Laplace operator. Spectral analysis on cones was developed by Cheeger \cite{Che2} \cite{Che3}, that in particular showed that the formal Hodge-Laplace operator $\Delta$ has a self adjoint extension on the space of square integrable forms.  A  complete set of solutions of the eigenvalues equation for $\Delta$ can be described in terms of a complete  discrete resolution of the Hodge-Lapalce operator on the section of the cone, see Lemma \ref{l2} \cite[Section 3]{Che2}. Considering square integrable forms and applying the boundary conditions, we obtain an explicit description of the spectrum of $\Delta$ in terms of the spectrum of the Hodge-Laplace operator on the section, see Lemmas \ref{l3}. 

\begin{rem}\label{ideal} It is important to observe here that beside the usual (either absolute or relative) boundary conditions, due to the presence of a non empty boundary, when the section $W$ has even dimension $m=2p$, further boundary conditions, called  {\rm ideal boundary conditions}, where introduced by Cheeger \cite{Che2}, as we recall here. Assume that there exists a decomposition $\H^p(W)=V_a\oplus V_r$, where $V_a$ and $V_r$ are  maximal self annihilating subspaces for the cup product paring. Then, a $p$ dimensional form belongs to the domain of the Laplace operator if its components in $V_a$ and $V_r$ satisfy Neumann and Dirichlet conditions respectively at $x=0$ (see \cite{Che2} pg. 580 for details).  Observe that our determination of the spectrum of the Laplace operator on  the cone is obtained assuming this decomposition and these conditions. Moreover, where ever not explicitly stated, the above decomposition and the ideal conditions will be assumed. We conclude this remark recalling that ideal boundary conditions are necessary to guarantee Poincar\'e duality on the cone.
\end{rem}

We can then prove that $\Sp_+(\Delta)$ is a regular sequence of spectral type and therefore the analytic torsion of the cone is well defined. 
We are in the position of extending the above definition of the analytic torsion to this setting, and  we call the resulting object the $L^2$ analytic torsion of the cone, $T_{\rm abs, ideal}(C_l(W))$ (we use the same notation and we  restrict to absolute BC, since the relative torsion follows by Poincar\'e duality \cite[Section 4]{HS2}). Of course it is not obvious at all if the invariant obtained in this way has some geometric or topological meaning. Since the spaces of the $L^2$ harmonic forms on $W$ are proved to be isomorphic to the intersection cohomology of $W$, the natural candidate for the $L^2$ analytic torsion is the intersection torsion. It was proved in \cite{HS3} that indeed $L^2$ analytic torsion and intersection torsion of the cone over an odd dimensional manifold coincide. The case of an even dimensional section is not clear yet, due to some difficulty in producing a natural definition for the intersection torsion in this case. There is work in progress in this direction. Here we tackle the analytic side of the problem.

The main purpose of this work is to compute $T_{\rm abs, ideal}(C_l(W))$, or more precisely to give formulas for it in terms of other either geometric or spectral invariants, and in particular invariants of the section $(W,g)$. A key point here is the following. The description of the spectrum of the Hodge-Laplace operator on the cone in terms of the spectrum of the Hodge-Laplace operator on the section makes possible to express (or decompose) all the spectral functions (in particular the zeta function and the logarithmic Gamma function, see Subsection \ref{s3}) on the cone in terms of the spectral functions on the sections, and to apply the technique introduced in \cite{Spr9} in order to tackle the derivative at zero of a class of double series. This is the main technical point, and in fact this is the reason that permits to obtain the final results. We did follow the same approach first in \cite{HS1}, where we gave the formula for the torsion of the cone over a sphere, and then in \cite{HS2}, where we considered as section any compact connected manifold of odd dimension. These results are superseded by the formulas obtained in the present work, where the section can have any dimension. We stress the fact that, as observed in \cite{HS2}, the calculation are exactly the same and there is no more difficulty to deal with the even dimensional section case that with  the odd dimensional one. Moreover, while the formulas in the odd dimensional section case have a clear geometric interpretation in terms of intersection torsion, due to \cite{HS4}, the even dimensional section case is still quite obscure. However, due to the recent interest in the subject (see \cite{MV}), we decided to present the formulas for the general case and the details of the calculation. Indeed, the presentation followed in this paper adds some insights in the possible interpretation of the result, as we explain at the end of this section, but first we present our first main result in the following theorem.

\begin{theo}\label{t0} Let $(W^{(m)},g)$ be an oriented compact connected Riemannian manifold of dimension $m$ with metric $g$. The $L^2$ analytic torsion $T_{\rm abs, ideal}(C_l(W))$ of the cone over $W$ with absolute  and ideal  boundary conditions is as follows, where $r_q=\rk H_q(W)$, and  $m_{{\rm cex},q,n}$, $\alpha_q$ and $\mu_{q,n}$ are defined in  Lemma \ref{l2}, $p\geq 1$:
\begin{align*}
\log T_{\rm abs}(C_l (W^{(2p-1)}))
=&\frac{1}{2}\sum_{q=0}^{p-1} (-1)^q (2p-2q)r_q\log l+\frac{1}{2}\log T(W,g)-\frac{1}{2} \sum_{q=0}^{p-1} (-1)^{q}  r_q\log(2(p-q))\\&+A_{\rm BM, abs}(\b C_l (W)),
\end{align*}

\begin{align*}
\log T_{\rm abs, ideal}(C_l(W^{(2p)}))
=&\left(\frac{1}{2}\sum_{q=0}^{p-1} (-1)^{q}  (2p-2q+1)r_q+(-1)^{p} \frac{1}{4}r_p\right)\log l\\
&-\frac{1}{2} \sum_{q=0}^{p-1} (-1)^{q}  r_q\log (2p-2q+1)((2p-2q-1)!!)^2\\
&+\frac{1}{2}\sum^{p-1}_{q=0} (-1)^q \sum_{n=1}^\infty m_{{\rm cex},q,n}\log\frac{1+\frac{\alpha_q}{\mu_{q,n}}}{1-\frac{\alpha_q}{\mu_{q,n}}}\\
&+\frac{1}{2}\chi( W)\log 2+A_{\rm BM, abs}(\b C_l (W)).
\end{align*}

\end{theo}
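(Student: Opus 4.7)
The plan is to compute each zeta function $\z(s,\Delta^{(q)})$ on the cone via the explicit spectral resolution of Lemmas \ref{l2} and \ref{l3}, and then assemble the alternating sum defining $\log T$. Each square-integrable eigenform of $\Delta$ on $C_l(W)$ is indexed by a triple $(q',n,k)$, where $q'$ is a form-degree on the section, $n$ labels a coexact eigenvalue of the Hodge--Laplace operator on $W$ (with multiplicity $m_{{\rm cex},q',n}$ and associated parameters $\mu_{q',n}$, $\alpha_{q'}$), and $k$ labels the positive zeros of a Bessel function whose index is built from $\mu_{q',n}$ and $\alpha_{q'}$. Harmonic forms, and in the even case the middle-degree $p$-forms subject to the ideal boundary conditions of Remark \ref{ideal}, must be isolated. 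The radial rescaling $x\mapsto l u$ yields the identity $\z(s,\Delta^{(q)}_{C_l(W)})=l^{2s}\z(s,\Delta^{(q)}_{C_1(W)})$, so differentiation at $s=0$ produces the $\log l$ coefficients directly from the residues $\z(0,\Delta^{(q)}_{C_1(W)})$; the remaining content of the formulas is the finite part at $l=1$.

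The core technical step is the derivative at $s=0$ of a double series. For fixed $n$, the inner series over the Bessel zeros has a classical Mellin representation through $\log\Gamma$, and for the full double sum Spreafico's machinery from \cite{Spr9} expresses $\z'(0,\Delta^{(q)})$ as a sum over $n$ of explicit functions of $\mu_{q',n}\pm\alpha_{q'}$ plus a finite list of residue contributions attached to the poles of the zeta function of $W$. These residues produce the combinatorial constants $\log(2(p-q))$ in the odd case and $\log((2p-2q+1)((2p-2q-1)!!)^2)$ in the even case, each weighted by $r_q$. Forming the alternating sum $\sum(-1)^q q\,\z'(0,\Delta^{(q)})$ and using the Hodge star to pair coexact $q$-forms on $W$ with coexact $(m-q)$-forms collapses the absolutely convergent part: in the odd case $m=2p-1$ the surviving contribution reproduces the Ray--Singer series defining $\tfrac{1}{2}\log T(W,g)$, while in the even case $m=2p$ the pairing $q\leftrightarrow m-q$ does not cancel the terms depending on $\alpha_{q'}$, leaving precisely the explicit series $\sum m_{{\rm cex},q,n}\log\frac{1+\alpha_q/\mu_{q,n}}{1-\alpha_q/\mu_{q,n}}$.

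The terms proportional to $r_p$ and $\chi(W)$ in the even case arise from the harmonic contribution at the middle degree $p$, governed by the decomposition $\H^p(W)=V_a\oplus V_r$; applying Neumann conditions on $V_a$ and Dirichlet on $V_r$ yields the fractional coefficient $\tfrac{1}{4}$ in front of $r_p\log l$ and, on summing over degrees with Poincar\'e-dual weights, the $\tfrac{1}{2}\chi(W)\log 2$ correction. The anomaly $A_{{\rm BM, abs}}(\b C_l(W))$ at the smooth boundary $\{l\}\times W$ is inserted through the Br\"uning--Ma formulas \cite{BM1,BM2}, which are directly applicable in a collar neighbourhood where the cone is a standard warped product. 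I expect the main obstacle to be the uniform control needed to interchange the derivative at $s=0$ with the $n$-sum in the double-series representation, together with the bookkeeping of the poles of the section zeta function; this is the part that demands the full strength of Spreafico's framework. Once these analytic steps are in place, the structural derivation of the formula for even-dimensional sections follows the template of \cite{HS1,HS2} essentially without modification, the only genuinely new ingredient being the careful handling of the ideal boundary conditions at the middle dimension.
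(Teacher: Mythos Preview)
Your overall architecture matches the paper's: use Lemmas~\ref{l2}--\ref{l3} to write each $\zeta(s,\Delta^{(q)})$ as a double Bessel-zero series plus simple harmonic series, exploit Hodge duality on the section to collapse the alternating sum, and apply the spectral decomposition of \cite{Spr9} to extract $\zeta'(0)$. However, there are two concrete gaps.

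\textbf{The anomaly term is not inserted; it must be identified.} You write that $A_{\rm BM,abs}(\b C_l(W))$ is obtained by applying \cite{BM1,BM2} ``directly'' in a collar of $\{l\}\times W$. But the Br\"uning--Ma formulas apply to smooth compact manifolds with boundary, and the cone is not one: the tip is a genuine singularity, and there is no a priori reason the singular contribution from Spreafico's method should agree with the Br\"uning--Ma integral over the smooth boundary. In the paper, this identification is the content of Section~\ref{nove}. The spectral computation on the cone produces a ``singular part'' $t'_{0,{\rm sing}}(0)+t'_{1,{\rm sing}}(0)$ consisting of the residue terms of Theorem~\ref{t4}. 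To prove this equals $A_{\rm BM,abs}(\b C_l(W))$, the paper runs the \emph{same} spectral computation on the frustum $C_{[l_1,l_2]}(W)$, which \emph{is} a smooth manifold, so that its singular part is known to be $A_{\rm BM}(\b C_{[l_1,l_2]}(W))$; then Lemma~\ref{tec1} and the factorization $\hat\Phi_{q,j,\pm}(s;l_1,l_2)=(\pm)^j l_1^{2s}\Phi_{q,j}(s)+(\mp)^j l_2^{2s}\hat\Phi_{q,j,\pm}(s)$ show the frustum's singular part is exactly twice the cone's. Without this comparison you have no way to name the singular residue terms, and the statement of Theorem~\ref{t0} has no content.

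\textbf{The combinatorial constants do not come from the residues.} You attribute the factors $\log(2(p-q))$ and $\log\big((2p-2q+1)((2p-2q-1)!!)^2\big)$ to ``the residue contributions attached to the poles of the zeta function of $W$''. In the paper's scheme those residues feed only into the singular part just discussed. The combinatorial constants arise instead from the \emph{simple} harmonic series $t_2,t_3$ (Section~\ref{har11}), evaluated via the explicit Bessel zeta values of equations~(\ref{p000})--(\ref{p00}); the $\tfrac12\log T(W,g)$ and the even-case series $\sum m_{{\rm cex},q,n}\log\frac{1+\alpha_q/\mu_{q,n}}{1-\alpha_q/\mu_{q,n}}$ come from the \emph{regular} part $-A_{0,0}(0)-A'_{0,1}(0)$ of the double series, not from the residues. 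Getting this bookkeeping wrong means you cannot assemble the final formula correctly, even if each individual ingredient is computed.
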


Next, we give an interpretation  of the formulas given in Theorem \ref{t0}. This is done in two steps. In the first, described in whole details in Section \ref{sgeo}, we show that  all the sums appearing in the formula of the torsion in the odd case, and some of these sums in the even case, coincide with the determinant of the change of basis between the basis of the harmonic forms on the cone and the basis of the harmonic forms on the sections, see Theorem \ref{t1} of Section \ref{sgeo}. This is a classical geometric contribution in the Reidemeister torsion of the cone (see \cite{HS4}), and therefore this interpretation clarifies completely the appearance of these sums. In particular, this describes completely the $L^2$-analytic torsion of a cone over an odd dimensional manifold, and also suggests that the Euler part of the boundary contribution is anomalous in the analytic torsion (see Remark \ref{euler} at the end of Section \ref{sgeo}). In the other case, i.e. for a cone over an even dimensional manifold, other sums appear in the formula for the analytic torsion. An interpretation of these further sums, collected into the two terms $B_1$ and $B_2$ in Theorem \ref{t1}, is the second step of our interpretation of the formula for the analytic torsion of the cone, and is presented in the final section of the paper. The interpretation is suggested  after proving that the formula for the torsion of the cone can be obtained as some limit case of the formula for the torsion of the frustum. We postpone the discussion of these results to the last section. 

We conclude this introductory section with a few remarks on the anomaly term appearing in the even case.
We observe that the anomaly term can be also written in terms of residues of some spectral functions, see Subsection \ref{singconesing}, or Theorem 1.1 of \cite{HS1}. Formulas for particular sections, as spheres of discs, are given in \cite{HS1} and \cite{HMS} \cite{Har}.

\begin{rem} By duality:
\[
\frac{1}{2}\sum^{p-1}_{q=0} (-1)^q \sum_{n=1}^\infty m_{{\rm cex},q,n}\log\frac{1+\frac{\alpha_q}{\mu_{q,n}}}{1-\frac{\alpha_q}{\mu_{q,n}}}
=\frac{1}{2}\sum^{2p-1}_{q=0} (-1)^q \sum_{n=1}^\infty m_{{\rm cex},q,n}\log\left(1+\frac{\alpha_q}{\mu_{q,n}}\right).
\]

\end{rem}

\begin{rem} Using Proposition 2.9 of \cite{Spr4} the first term in the last line of the second formula in the theorem reads
\begin{align*}
\frac{1}{2}\sum^{p-1}_{q=0} (-1)^q \sum_{n=1}^\infty m_{{\rm cex},q,n}\log\frac{1+\frac{\alpha_q}{\mu_{q,n}}}{1-\frac{\alpha_q}{\mu_{q,n}}}
&=\frac{1}{2}\sum^{p-1}_{q=0} (-1)^q\left(\zeta_q'(0,\alpha_q)-\zeta_q'(0,-\alpha_q)\right),\\
&=\frac{1}{2}\sum^{p-1}_{q=0} (-1)^q\log\frac{\det_\zeta \left((\Delta_{(W,g)}^{(q)}+\alpha_q^2)^\frac{1}{2}-\alpha_q\right)}{\det_\zeta\left((\Delta_{(W,g)}^{(q)}+\alpha_q^2)^\frac{1}{2}+\alpha_q\right)},
\end{align*}
where $\Delta_{(W,g)}$ is the Hodge-Laplace operator on the section of the cone $(W,g)$, and
\[
\zeta_q(s,x)=\sum_{n=1}^\infty m_{{\rm cex},q,n} (\mu_{q,n}+x)^{-s}.
\]

\end{rem}

\section{A geometric interpretation of the analytic torsion}
\label{sgeo}

The spaces of harmonic forms on the frustum (with absolute an mixed BC) were computed in \cite{HS3}. The space of harmonic forms on the cone (with absolute and relative BC) were compute \cite{HS4}. In particular,  in the even case $m=2p$ the result changes if we assume the Cheeger ideal BC, as described in the introduction. A simple calculation, proceeding as in the proof of Lemma 4.1 of \cite{HS4}, gives the following result.

\begin{lem} We have the following isomorphisms of vector spaces induced by the extension of the inclusion of the forms:  
\begin{align*}
\H^q_{\rm abs}(C_l (W))&=\begin{cases}\H^q(W), &0\leq q\leq p-1,\\
 \{0\}, & p\leq q\leq 2p-1,\end{cases}&\dim W=2p-1,\\
\H^q_{\rm abs}(C_l (W))&=\begin{cases}\H^q(W), &0\leq q\leq p,\\
 \{0\}, & p+1\leq q\leq 2p+1,\end{cases}&\dim W=2p,\\
\H^q_{\rm abs, ideal}(C_l W)&=
\begin{cases} 
\H^q (W) & {\text{if}}\; 0\leq q\leq p-1,\\
V_{\rm a} & {\text{if}}\; q = p,\\
0 & {\text{if}}\; p+1\leq q\leq 2p+1,
\end{cases}&\dim W=2p,\\
\H^q_{\rm abs}(C_{[l_1,l_2]} (W))&=\H^q(W).
\end{align*}
\end{lem}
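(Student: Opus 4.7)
The plan is to use Cheeger's separation of variables on $C_l(W)$ to produce an explicit finite family of power-law candidates for harmonic forms, and then to cut out the actual harmonic space by imposing, in order, $L^2$-integrability at the cone tip, the prescribed boundary conditions at $x = l$, and, where relevant, the ideal conditions in middle degree.

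First, decompose a $q$-form as $\omega = f_1(x)\,\eta_1 + f_2(x)\,dx\wedge\eta_2$ with $\eta_j$ pulled back from $W$, and expand each $\eta_j$ in the Hodge decomposition on $W$. The equation $\Delta\omega = 0$ decouples across this decomposition, and on the harmonic summand it reduces to an Euler equation of the form $f'' + \frac{m-2q}{x}f' = 0$, because harmonic $\eta$ contributes no potential term (cf.\ Lemma \ref{l2}). Hence, for each $\eta \in \H^q(W)$ the candidates are $x^a\eta$ with $a \in \{0,\;2q-m+1\}$, together with analogous $dx$-wedge candidates built from $\H^{q-1}(W)$.

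Next, the pointwise norm of $x^a\eta$ in the cone metric scales like $x^{a-q}$, so $L^2$-integrability near $x = 0$ requires $2a + m - 2q > -1$. For $q \leq p-1$ only $a = 0$ is admissible among the $x^a\eta$ candidates; the surviving $L^2$ $dx$-wedge candidates are killed by the absolute condition $\iota_\nu\omega|_{x=l} = 0$. For $q \geq p+1$ the admissible candidate $x^{2q-m+1}\eta$ fails the second absolute condition $\iota_\nu d\omega|_{x=l} = 0$, since $d\omega$ has a nonzero $dx\wedge\eta$ part; an analogous computation eliminates the $dx$-wedge candidates. In the range $0 \leq q \leq p - 1$ the surviving candidate $\omega = \eta$ satisfies both absolute conditions automatically, because $\eta$ is closed and carries no $dx$-part, yielding the claimed isomorphism $\H^q(W) \cong \H^q_{\mathrm{abs}}(C_l(W))$.

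The middle degree $q = p$ with $m = 2p$ is delicate: both exponents $0$ and $1$ give $L^2$ candidates $\eta$ and $x\eta$ for $\eta \in \H^p(W)$; $\omega = \eta$ satisfies absolute BC, while $\omega = x\eta$ fails at $x = l$ since $\iota_\nu d(x\eta)|_{x=l} = \eta \neq 0$. This accounts for all of $\H^p(W)$ in the plain absolute case. Imposing in addition the ideal condition at $x = 0$ demands that the Neumann (regular at $0$) component of $\omega$ lie in $V_a$ and the Dirichlet (vanishing at $0$) component in $V_r$; since the surviving solution $\omega = \eta$ is Neumann, only the $V_a$ summand of $\H^p(W)$ remains, matching $\H^p_{\mathrm{abs,ideal}}(C_l(W)) \cong V_a$. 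For the frustum there is no singular tip, so both power-law solutions are admissible on every summand, and absolute BC at the two smooth boundaries pins down a unique harmonic representative for each $\eta \in \H^q(W)$, giving $\H^q(W)$ in every degree. The main obstacle is the careful bookkeeping across the $L^2$ threshold, the absolute BC, and the ideal conditions; this is exactly the argument of Lemma 4.1 of \cite{HS4}, which extends to the present setting essentially verbatim.
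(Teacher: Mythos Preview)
Your proposal is correct and follows essentially the same approach as the paper, which itself only says ``a simple calculation, proceeding as in the proof of Lemma 4.1 of \cite{HS4}''; you have supplied a faithful outline of that calculation---power-law solutions of the radial Euler equation, the $L^2$ cutoff at the tip, the absolute conditions at $x=l$, and the ideal splitting in middle degree when $m=2p$. One small point you leave implicit: you should also note that the coexact and exact summands of the section decomposition contribute no closed-and-coclosed $L^2$ forms satisfying the boundary conditions (this is where the requirement $d\omega=d^\dagger\omega=0$, rather than merely $\Delta\omega=0$, is used), but this is part of the routine bookkeeping you already defer to \cite{HS4}.
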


Proceeding as in Section 3.5 of \cite{HS3}, we have a the following commutative diagram of isomorphisms of vector spaces (we give here the diagram for the cone, the one for the frustum is in \cite{HS3}), for $0\leq q\leq \left[\frac{m}{2}\right]$,

\centerline{
\xymatrix{\H^q_{\rm abs}(C_l (W))\ar[r]^{\star_{C_l(W)}}&\H^{m-q+1}_{\rm rel}(C_l(W))\ar[r]^{\AA_{C_l(W), {\rm rel}}^{m-q+1}}&H^{m-q+1}(\widehat{C_l (W)}, \widehat{\b C_l (W)})&H_q(C_l(W))\ar[l]_{\hspace{40pt}\P}\\
\H^q( W)\ar[r]^{\star_{W}}\ar[u]_{\frac{\ddot{(\_)}}{\sqrt{\gamma_q}}}&
\H^{m-q}( W)\ar[r]^{\AA^{m-q}_{W}}\ar[u]_{\frac{(-1)^q x^{m-2q}dx\wedge \_}{\sqrt{\gamma_q}}}&H^{m-q}(\widehat{ W})\ar[u]&H_q( W)\ar[l]_{\P}\ar[u]_{(-1)^q\sqrt{\gamma_q}}
}}
\noindent where $\AA$ is the de Rham map on the cone, $\P$ Poincar\`e duality, $\star$ the Hodge isomorphism, and the factor $\gamma_q$ is the ratio between the $L^2$ norm of the constant extension of a form $\omega$ on the cone and the $L^2$ norm of $\omega$ (the double dot indicates the costant extension):
\[
\gamma_q=\frac{\|\ddot\omega_{C_l}\|^2_{C_l(W)}}{\|\omega\|^2_W}=\int_0^l x^{m-2q} dx=\frac{l^{m-2q+1}}{m-2q+1}.
\]

The same analysis on the frustum gives (see \cite[Section 3.5]{HS3} for all definitions and details)
\[
\Gamma_q=\frac{\|\ddot\omega_{F}\|^2_{C_{[l_1,l_2]}(W)}}{\|\omega\|^2_W}=\int_{l_1}^{l_2} x^{m-2q} dx=\left\{
\begin{array}{cc}
\frac{1}{m+1-2q}\left(l_2^{m+1-2q}-l_1^{m+1-2q}\right)& {\rm if} \; m+1-2q\neq0, \\
\ln\frac{l_2}{l_1}, & {\rm if} \; m+1-2q=0.
\end{array}
\right.
\]

Note that for all $q< \left[\frac{m}{2}\right]$,
\[
\lim_{l_1\to 0^+, l_2=l} \Gamma_q=\gamma_q.
\]

Recall that for  a real vector space $V$ we denote $\Lambda^{\dim V} V$ by $\det V$, and we call this line the  determinant line of $V$ ($\det 0:=\R$). 
If $\vs=\{v_1, \dots, v_{\dim V}\}$ is a basis for $V$, we use the notation $\det \vs$ for $v_1\wedge\dots\wedge v_{\dim V}$. For a finite dimensional graded vector space $V_\bu=( V_q)_{q\in \Z}=\bigoplus_{q\in \Z} V_q$, set
\[
\Det^{m}_n V_\bu:=\bigotimes_{q=n}^m (\det V_q)^{\da^q},
\]
where $V^{\da^{2p}}=V$ and $V^{\da^{2p+1}}=V^\da$. This one dimensional vector space is called the determinant line of $V_\bu$. 

With this notation, if $\alphas$ is a (graded) basis for the harmonic forms on $W$, we have that the ratio between the determinant of the homology graded basis  induced by an harmonic basis of the frustum and of its section, and of the cone and of this section are, respectively:
\begin{align*}
\frac{{\rm Det} \ddot\alphas_F}{{\rm Det} \alphas}&=\prod_{q=0}^m \Gamma_q^{\frac{(-1)^q r_q}{2}},& &{\rm absolute~ BC}\\
\frac{{\rm Det}^{\left[\frac{m-1}{2}\right]}_0 \ddot\alphas_{C_l}}{{\rm Det}^{\left[\frac{m-1}{2}\right]}_0 \alphas}&=\prod_{q=0}^{\left[\frac{m-1}{2}\right]} \gamma_q^{\frac{(-1)^q r_q}{2}}, & &{\rm absolute~ BC}\\
\frac{{\rm Det}^{p-1}_0 \ddot\alphas_{C_l}}{{\rm Det}^{p-1}_0 \alphas}&=\gamma_p^{\frac{(-1)^p r_p}{4}}\prod_{q=0}^{p-1} \gamma_q^{\frac{(-1)^q r_q}{2}}, & m=2p,~& {\rm absolute, ideal~ BC}.
\end{align*}

For completeness we gave here also the result without ideal BC here, however in the following the formulas are all given assuming ideal BC. A simple calculation completes the proof of the following corollary of Theorem \ref{t0} (a similar formula holds in the case $m=0$).

\begin{theo}\label{t1} The  analytic torsion of the cone reads:
\begin{align*}
\log T_{\rm abs, ideal}(C_l(W^{(m)}))=&\frac{1}{2}\log T(W,g)+\log \frac{{\rm Det}^{\left[\frac{m-1}{2}\right]}_0 \ddot\alphas}{{\rm Det}^{\left[\frac{m-1}{2}\right]}_0 \alphas}+\frac{1}{4}\chi(W)\log 2+A_{\rm BS,abs}(\b C_l(W))\\
&+B^{(m)}_1(C_l(W))+B^{(m)}_2(C_l(W)),
\end{align*}
where $B^{(m)}_{1/2}(C_l(W)$ are the following anomaly terms (vanishing when $m=2p-1$ is odd):
\begin{align*}
B^{(2p)}_1(C_l(W))=& - \sum_{q=0}^{p-1} (-1)^{q}  r_q\log (2p-2q-1)!!+\frac{1}{2}\sum^{p-1}_{q=0} (-1)^q \sum_{n=1}^\infty m_{{\rm cex},q,n}\log\frac{1+\frac{\alpha_q}{\mu_{q,n}}}{1-\frac{\alpha_q}{\mu_{q,n}}},\\
B^{(2p)}_2(C_l(W))=&\frac{1}{4}\chi(W)\log 2.
\end{align*}

The  analytic torsion of the frustum reads:
\[
\log T_{\rm abs}(C_{[l_1,l_2]}(W))=\log T(W,g)+\log \frac{{\rm Det} \ddot\alphas}{{\rm Det} \alphas}+\frac{1}{2}\chi(W)\log 2+A_{\rm BS,abs}(\b C_{[l_1,l_2]}(W)).
\]

\end{theo}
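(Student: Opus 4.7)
The plan is to derive Theorem \ref{t1} as a bookkeeping rearrangement of Theorem \ref{t0}, using the three determinant-line identities for $\mathrm{Det}\,\ddot\alphas/\mathrm{Det}\,\alphas$ displayed just above; the frustum statement is a separate consequence of the smooth Cheeger--M\"uller--Br\"uning--Ma formula \eqref{e000}.

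For the cone in the odd case $m=2p-1$, I would first expand $\log({\rm Det}^{p-1}_{0}\ddot\alphas/{\rm Det}^{p-1}_{0}\alphas)=\tfrac{1}{2}\sum_{q=0}^{p-1}(-1)^{q}r_{q}\log\gamma_{q}$ using $\gamma_{q}=l^{2p-2q}/(2p-2q)$. This reproduces exactly the $\log l$ coefficients and the constants $-\log(2(p-q))$ appearing in the first formula of Theorem \ref{t0}; since $\chi(W)=0$ in odd dimension and $B_{1}^{(m)}=B_{2}^{(m)}=0$ by convention, the odd-case identity then follows at once.

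For the even case $m=2p$ the analogous expansion, now including the ideal-BC half-weight $\gamma_{p}^{(-1)^{p}r_{p}/4}$ (which encodes $\dim V_{\mathrm{a}}=r_{p}/2$), contributes
\[
\left[\frac{(-1)^{p}r_{p}}{4}+\frac{1}{2}\sum_{q=0}^{p-1}(-1)^{q}(2p-2q+1)r_{q}\right]\log l\;-\;\frac{1}{2}\sum_{q=0}^{p-1}(-1)^{q}r_{q}\log(2p-2q+1).
\]
Matching this against Theorem \ref{t0} leaves a residual constant $-\sum_{q=0}^{p-1}(-1)^{q}r_{q}\log((2p-2q-1)!!)$, which is the first summand of $B_{1}^{(2p)}$; the $m_{\mathrm{cex},q,n}$-series is the second summand verbatim; and $\tfrac{1}{2}\chi(W)\log 2$ splits as $\tfrac{1}{4}\chi(W)\log 2+B_{2}^{(2p)}$. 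The implicit $\tfrac{1}{2}\log T(W,g)$ term is harmless because $\log T(W,g)=0$ on a closed oriented even-dimensional manifold: the McKean--Singer identity forces $\sum_{k}(-1)^{k}\zeta(s,\Delta^{(k)})\equiv 0$, hence $\sum_{k}(-1)^{k}\zeta'(0,\Delta^{(k)})=0$, and together with Hodge duality $\zeta'(0,\Delta^{(k)})=\zeta'(0,\Delta^{(n-k)})$ this kills the Ray--Singer torsion.

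For the frustum, $C_{[l_{1},l_{2}]}(W)$ is a smooth compact Riemannian manifold with boundary $W\sqcup W$, so \eqref{e000} applies directly. The frustum deformation retracts onto $W$, so its Reidemeister torsion computed in the lifted harmonic basis $\ddot\alphas$ equals $\tau(W,g)$ in the basis $\alphas$ multiplied by the change-of-basis determinant ${\rm Det}\,\ddot\alphas/{\rm Det}\,\alphas$; combined with $\tau(W,g)=T(W,g)$ (Cheeger--M\"uller on the closed manifold $W$) and $\chi(\partial C_{[l_{1},l_{2}]}(W))=2\chi(W)$, this yields the displayed identity. I expect the only real subtlety throughout to be the ideal-BC middle-degree half-weight $\gamma_{p}^{(-1)^{p}r_{p}/4}$: only $V_{\mathrm{a}}\subset\H^{p}(W)$ extends to a harmonic form on the cone, which is precisely what produces both that factor and the isolated $(-1)^{p}r_{p}/4\cdot\log l$ summand in Theorem \ref{t0}; the rest is direct algebra once Theorem \ref{t0} and the determinant-line computations are in hand.
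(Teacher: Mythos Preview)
Your proposal is correct and follows essentially the same route as the paper. The paper explicitly states that Theorem \ref{t1} is obtained by ``a simple calculation'' from Theorem \ref{t0} together with the determinant-line identities displayed just above it; your odd- and even-case algebra is precisely that calculation (including the correct observation that $\log T(W,g)=0$ when $\dim W$ is even, which is why no such term appears in the even formula of Theorem \ref{t0}). For the frustum you invoke \eqref{e000} directly on the smooth manifold and reduce $\log\tau(C_{[l_1,l_2]}(W),g)$ to $\log T(W,g)+\log(\mathrm{Det}\,\ddot\alphas/\mathrm{Det}\,\alphas)$ via simple-homotopy invariance and change of basis; the paper does not spell this out in Section \ref{sgeo} but defers to \cite{HS3} (and later re-derives it spectrally in Section \ref{8.3}), so your argument is in fact more self-contained than the paper's at this point.
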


\begin{rem}\label{euler} As announced in the introduction, the formula in Theorem \ref{t1} completely describes the analytic torsion of a cone over an odd dimensional manifold in terms of topological and geometric quantities. In particular, the splitting between global and boundary terms is evident (compare with the formula in equation (\ref{e000}) for a regular manifold, and with the discussion in Section \ref{s4}). In the even case, the same analysis suggests the description of the torsion given in the formula, with two new anomaly terms, called $B_1$ and $B_2$. If now we read the formula for the torsion in the even case looking for the global and the boundary contributions, we find out that part of the boundary contribution can be interpret as anomalous. 
This is justified as follow. Recall that the boundary term  splits into two parts: one is the classical part, see the work of L\"{u}ck \cite{Luc} and depends on the Euler characteristic of the boundary, the other is the anomaly boundary term, computed by Br\"{u}ning and Ma \cite{BM1,BM2}. Then, we realise that (collecting together all the terms where the Euler characteristic appears) the classical boundary term in the cone coincides with the one of the frustum, even if the boundary is different. In other words, it seems that the analytic torsion of the cone does not see that the boundary at $x=0$ collapses to a point. Following this interpretation, we split this boundary term into two parts: the first is
\[
\frac{1}{4}\chi(W)\log 2,
\]
and is the classical contribution of the boundary, the second one is $B_2(W)$, and is understood as an anomaly boundary term.
\end{rem}

\section{Spectral properties of the Hodge-Laplace operator}
\label{s1}

Consider the metric in Definition \ref{metric} either on the cone $C_l(W)$ or on the frustum $C_{[l_1>0,l_2]}(W)$. Let  $\omega\in \Omega^{q}(C_l (W))$ ( $\omega\in \Omega^{q}(C_{[l_1>0,l_2]}(W))$), set
\[
\omega(x,y)=f_1(x)\omega_1(y)+f_2(x)dx\wedge \omega_2(y),
\]
with smooth functions $f_1$ and $f_2$, and $\omega_j\in \Omega( W)$. Then (we denote operators acting on the section by a tilde),
\begin{align*}
\label{f1}\star \omega(x,y)&= x^{m-2q+2} f_2(x)\tilde\star \omega_2(y)+(-1)^q x^{m-2q}f_1(x) dx\wedge\tilde\star \omega_1(y),
\end{align*}
\[
\begin{aligned}d \omega(x,y)   &= f_1(x)\tilde d \omega_1(y) + \b_x f_1(x) dx \wedge \omega_1(y) - f_2(x) dx \wedge d\omega_2(y),\\
d^\dagger \omega(x,y)&= x^{-2} f_1(x)\tilde d^{\dag}\omega_1 (y) -\left((m-2q+2)x^{-1}f_2(x) + \b_x f_2(x)\right)\omega_2(y)\\
&-x^{-2}f_2(x) dx \wedge \tilde d^{\dag} \omega_2(y),
\end{aligned}
\]

\begin{align*}
\Delta\omega(x,y)&= \left(-\b_x^2 f_1(x) -(m-2q)x^{-1}\b_x f_1(x)\right)\omega_1(y) + x^{-2}f_1(x)\tilde
\Delta\omega_1(y)-2x^{-1}f_2(x)\tilde d\omega_2(y)\\
&+dx\wedge \left(x^{-2}f_2(x) \tilde\Delta\omega_2(y)+\omega_2(y)\left(-\b^2_x f_2(x) -(m-2q+2)x^{-1}\b_x f_2(x)\right.\right.\\
&\left.\left. + (m-2q+2)x^{-2}f_2(x) \right) -2x^{-3}f_1(x)\tilde d^{\dag}\omega_1(y)\right).
\end{align*}

\begin{lem}[Cheeger \cite{Che2}]\label{l2}
Let $\{\varphi_{{\rm har}}^{(q)},\varphi_{{\rm cex},n}^{(q)},\varphi_{{\rm ex},n}^{(q)}\}$ be an orthonormal  basis of $\Omega^q(W)$
consisting of harmonic,  coexact and exact eigenforms of $\tilde\Delta^{(q)}$. Let $\lambda_{q,n}$
denotes the  eigenvalue of $\varphi_{{\rm cex},n}^{(q)}$ and $m_{{\rm cex},q,n}$ its multiplicity. 
Let $J_\nu$ be the Bessel function of index $\nu$. Define 
\begin{align*}
\alpha_q &=\frac{1}{2}(1+2q-m), &\mu_{q,n} &= \sqrt{\lambda_{q,n}+\alpha_q^2}.
\end{align*}

Then, assuming that $\mu_{q,n}$ is not an integer,  the solutions of the equation $\Delta u=\lambda^2 u$,
with $\lambda\not=0$, are  of the following six types:
\begin{align*}
\psi^{(q)}_{\pm, 1,n,\lambda} =& x^{\alpha_q} J_{\pm\mu_{q,n}}(\lambda x) \varphi_{{\rm cex},n}^{(q)},\\
\psi^{(q)}_{\pm, 2,n,\lambda} =& x^{\alpha_{q-1}} J_{\pm\mu_{q-1,n}}(\lambda x) \tilde d\varphi_{{\rm cex},n}^{(q-1)} +
\b_x(x^{\alpha_{q-1}}J_{\pm\mu_{q-1,n}}(\lambda x)) dx \wedge \varphi_{{\rm cex},n}^{(q-1)}\\
\psi^{(q)}_{\pm,3,n,\lambda} =& x^{2\alpha_{q-1}+1}\b_x(x^{-\alpha_{q-1}}J_{\pm\mu_{q-1,n}}(\lambda x) )\tilde d\varphi_{{\rm cex},n}^{(q-1)}\\
\nonumber &+ x^{\alpha_{q-1}-1}J_{\pm\mu_{q-1,n}}(\lambda x) dx \wedge \tilde d^{\dag} \tilde d \varphi_{{\rm cex},n}^{(q-1)}\\
\psi^{(q)}_{\pm,4,n,\lambda} =& x^{\alpha_{q-2}+1}J_{\pm\mu_{q-2,n}}(\lambda x) dx \wedge \tilde d \varphi_{{\rm cex},n}^{(q-2)}\\
\psi^{(q)}_{\pm,E,\lambda} =& x^{\alpha_{q}} J_{\pm|\alpha_{q}|}(\lambda x) \varphi_{{\rm har}}^{(q)}\\
\psi^{(q)}_{\pm,O,\lambda} =& \b_x(x^{\alpha_{q-1}}J_{\pm|\alpha_{q-1}|}(\lambda x)) dx \wedge \varphi_{{\rm har}}^{(q-1)}.
\end{align*}

When the  index is an integer the $-$ solutions must be modified including some logarithmic term (see for example \cite{Wat} for a set of linear independent solutions of the Bessel equation).
\end{lem}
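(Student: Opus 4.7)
The plan is to separate variables in $\Delta u=\lambda^2 u$ and then use the Hodge decomposition on $W$. Every $q$-form on the cone takes the shape $\omega(x,y)=f_1(x)\omega_1(y)+f_2(x)\,dx\wedge\omega_2(y)$ with $\omega_1\in\Omega^q(W)$ and $\omega_2\in\Omega^{q-1}(W)$; expanding $\omega_1,\omega_2$ along the orthonormal Hodge basis of $\Omega^{\bullet}(W)$ reduces the PDE to a family of scalar or $2\times 2$ ODE systems for $(f_1,f_2)$. The structural observation from the displayed formula for $\Delta\omega$ is that the coupling between $\omega_1$ and $\omega_2$ occurs only through the two terms $-2x^{-1}f_2\,\tilde d\omega_2$ and $-2x^{-3}f_1\,\tilde d^{\dagger}\omega_1$. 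Thus the problem is block-diagonal with respect to the following five subspaces, in only one of which the coupling is genuinely non-trivial: (i) $\omega_1$ coexact, $\omega_2=0$; (ii) $\omega_1=\tilde d\varphi_{{\rm cex},n}^{(q-1)}$ paired with $\omega_2=\varphi_{{\rm cex},n}^{(q-1)}$; (iii) $\omega_1=0$, $\omega_2=\tilde d\varphi_{{\rm cex},n}^{(q-2)}$; (iv) $\omega_1=\varphi_{\rm har}^{(q)}$, $\omega_2=0$; (v) $\omega_1=0$, $\omega_2=\varphi_{\rm har}^{(q-1)}$. In cases (i), (iii), (iv), (v) one of $\tilde d\omega_2$ or $\tilde d^{\dagger}\omega_1$ vanishes automatically, so the system decouples.

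For each decoupled block I would reduce to a single Bessel equation via the ansatz $f=x^{\alpha}g(\lambda x)$. In case (i) the equation collapses to
\[
-\b_x^2 f_1-(m-2q)x^{-1}\b_x f_1+\lambda_{q,n}x^{-2}f_1=\lambda^2 f_1,
\]
and the substitution $f_1=x^{\alpha_q}g(\lambda x)$ kills the first-order term via the identity $2\alpha_q+(m-2q)=1$, which follows from $\alpha_q=(1+2q-m)/2$. The residual equation for $g$ is Bessel's equation of index $\mu_{q,n}=\sqrt{\lambda_{q,n}+\alpha_q^2}$, whose two independent solutions are $J_{\pm\mu_{q,n}}(\lambda x)$, yielding $\psi^{(q)}_{\pm,1,n,\lambda}$. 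Cases (iii), (iv), (v) are handled identically and produce $\psi^{(q)}_{\pm,4,n,\lambda}$, $\psi^{(q)}_{\pm,E,\lambda}$ and $\psi^{(q)}_{\pm,O,\lambda}$; in the harmonic cases the section eigenvalue vanishes, so the Bessel index collapses to $|\alpha_q|$, respectively $|\alpha_{q-1}|$.

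The main obstacle is block (ii), which must produce the four independent solutions $\psi^{(q)}_{\pm,2,n,\lambda}$ and $\psi^{(q)}_{\pm,3,n,\lambda}$. Rather than solving the $2\times 2$ ODE system directly, I would use that the cone operators $d$ and $d^{\dagger}$ commute with $\Delta$ and preserve eigenvalues. Starting from $\psi^{(q-1)}_{\pm,1,n,\lambda}=h(x)\,\varphi$ with $h(x)=x^{\alpha_{q-1}}J_{\pm\mu_{q-1,n}}(\lambda x)$ and $\varphi=\varphi_{{\rm cex},n}^{(q-1)}$, the formula for $d$ displayed above the lemma gives
\[
d\psi^{(q-1)}_{\pm,1,n,\lambda}=h(x)\,\tilde d\varphi+\b_x h(x)\,dx\wedge\varphi,
\]
which is exactly $\psi^{(q)}_{\pm,2,n,\lambda}$. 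Symmetrically, applying the formula for $d^{\dagger}$ to $\psi^{(q+1)}_{\pm,4,n,\lambda}=x^{\alpha_{q-1}+1}J_{\pm\mu_{q-1,n}}(\lambda x)\,dx\wedge\tilde d\varphi$ yields $\psi^{(q)}_{\pm,3,n,\lambda}$ (up to the overall constant $\lambda_{q-1,n}$) after using $\tilde d^{\dagger}\tilde d\varphi=\lambda_{q-1,n}\varphi$ and the identity $(m-2q)+\alpha_{q-1}+1=-\alpha_{q-1}$. Linear independence of $\psi^{(q)}_{\pm,2}$ and $\psi^{(q)}_{\pm,3}$ is immediate from the $x$-dependence of their projections onto the independent basis forms $\tilde d\varphi$ and $dx\wedge\varphi$, so these four functions span the full four-dimensional solution space of the coupled block.

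Finally, when $\mu_{q,n}$ (or $|\alpha_q|$, $|\alpha_{q-1}|$ in the harmonic cases) is a non-negative integer, $J_{\mu}$ and $J_{-\mu}$ are linearly dependent, and the minus-sign solutions must be replaced by a second independent Bessel solution such as $Y_{\mu}$, which involves a logarithm; the ansatze above remain valid with this replacement.
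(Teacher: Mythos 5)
The paper does not prove Lemma~\ref{l2}: it is stated with the attribution ``[Cheeger]'' and the reader is referred to \cite{Che2}, while the paper itself supplies only the explicit formulas for $\star$, $d$, $d^{\dagger}$, $\Delta$ displayed just above the statement. Your reconstruction is correct and is essentially the standard separation-of-variables argument that Cheeger uses, so there is nothing to contrast with. Two small points are worth flagging. First, the ansatz $f=x^{\alpha_q}g(\lambda x)$ in block (i) does not ``kill'' the first-order term; the identity $2\alpha_q+(m-2q)=1$ turns the coefficient of $g'$ into the Bessel-normalised $-\lambda x$, while the identity $-\alpha_q(\alpha_q-1)-(m-2q)\alpha_q+\lambda_{q,n}=\mu_{q,n}^2$ produces the index. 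Second, in block (v) the direct ansatz $f_2=x^a g(\lambda x)$ forces $a=\alpha_{q-1}$ but the resulting Bessel index is $|\alpha_{q-1}-1|=|\alpha_{q-2}|$, not $|\alpha_{q-1}|$ as you write; this is consistent with the statement because $\b_x\bigl(x^{\alpha_{q-1}}J_{\pm|\alpha_{q-1}|}(\lambda x)\bigr)=\pm\lambda\, x^{\alpha_{q-1}}J_{\pm(|\alpha_{q-1}|-1)}(\lambda x)$, i.e.\ the stated $\psi^{(q)}_{\pm,O}$ is already the $J_{\pm|\alpha_{q-2}|}$ solution written in derivative form. The cleanest route for block (v), matching exactly what you do for block (ii), is to apply $d$ to $\psi^{(q-1)}_{\pm,E}$ and use $\tilde d\varphi^{(q-1)}_{\rm har}=0$, which hands you the paper's formula for $\psi^{(q)}_{\pm,O}$ verbatim; the $d$/$d^{\dagger}$ device you employ for the coupled block thus actually handles all three of the ``derivative'' types (2, 3, $O$) uniformly.
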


Following \cite{Che2} and \cite{BS2}, the formal Hodge-Laplace operator in equation (\ref{l3}) defines a concrete self adjoint operator with domain in the space of the  square integrable forms on the cone (see \cite{HS1} for details). This operator (that we denote by the same symbol $\Delta$) has a pure point spectrum as described in the following lemmas, whose proof follows applying BC and square integrability to the solutions described in Lemma \ref{l2} (see \cite{HS4} for details on the proof, and observe that more care is necessary in the even dimensional case, where we must take into account also the ideal BC). 

\begin{lem}\label{l3} The positive part of the spectrum of the Hodge-Laplace operator  on $C_l(W)$, with
absolute boundary conditions on $\b C_l (W)$ is as follows, where $0\leq q\leq m+1$. If $m=\dim W=2p-1$, $p\geq 1$:
\begin{align*}
\Sp_+ \Delta_{\rm abs}^{(q)} &= \left\{m_{{\rm cex},q,n} : \hat j^{2}_{\mu_{q,n},\alpha_q,k}/l^{2}\right\}_{n,k=1}^{\infty}
\cup
\left\{m_{{\rm cex},q-1,n} : \hat j^{2}_{\mu_{q-1,n},\alpha_{q-1},k}/l^{2}\right\}_{n,k=1}^{\infty} \\
&\cup \left\{m_{{\rm cex},q-1,n} : j^{2}_{\mu_{q-1,n},k}/l^{2}\right\}_{n,k=1}^{\infty} \cup \left\{m _{{\rm cex},q-2,n} :
j^{2}_{\mu_{q-2,n},k}/l^{2}\right\}_{n,k=1}^{\infty} \\
&\cup \left\{m_{{\rm har},q}:\hat j^2_{|\alpha_q|,\alpha_q,k}/l^{2}\right\}_{k=1}^{\infty} \cup \left\{ m_{{\rm har},q-1}:\hat
j^2_{|\alpha_{q-1}|,\alpha_q,k}/l^{2}\right\}_{k=1}^{\infty}.
\end{align*}

If $m=\dim W=2p$, $p\geq 1$:
\begin{align*}
\Sp_+ \Delta_{\rm abs, ideal}^{(q\not= p, p+1)} &= \left\{m_{{\rm cex},q,n} : \hat j^{2}_{\mu_{q,n},\alpha_q,k}/l^{2}\right\}_{n,k=1}^{\infty}
\cup
\left\{m_{{\rm cex},q-1,n} : \hat j^{2}_{\mu_{q-1,n},\alpha_{q-1},k}/l^{2}\right\}_{n,k=1}^{\infty} \\
&\cup \left\{m_{{\rm cex},q-1,n} : j^{2}_{\mu_{q-1,n},k}/l^{2}\right\}_{n,k=1}^{\infty} \cup \left\{m _{{\rm cex},q-2,n} :
j^{2}_{\mu_{q-2,n},k}/l^{2}\right\}_{n,k=1}^{\infty} \\
&\cup \left\{m_{{\rm har},q}:\hat j^2_{|\alpha_q|,\alpha_q,k}/l^{2}\right\}_{k=1}^{\infty} \cup \left\{ m_{{\rm har},q-1}:\hat j^2_{|\alpha_{q-1}|,\alpha_{q-1},k}/l^{2}\right\}_{k=1}^{\infty},
\end{align*}
\begin{align*}
\Sp_+ \Delta_{\rm abs, ideal}^{(p)} &= \left\{m_{{\rm cex},p,n} : \hat j^{2}_{\mu_{p,n},\alpha_p,k}/l^{2}\right\}_{n,k=1}^{\infty}
\cup
\left\{m_{{\rm cex},p-1,n} : \hat j^{2}_{\mu_{p-1,n},\alpha_{p-1},k}/l^{2}\right\}_{n,k=1}^{\infty} \\
&\cup \left\{m_{{\rm cex},p-1,n} : j^{2}_{\mu_{p-1,n},k}/l^{2}\right\}_{n,k=1}^{\infty} \cup \left\{m _{{\rm cex},p-2,n} :
j^{2}_{\mu_{p-2,n},k}/l^{2}\right\}_{n,k=1}^{\infty} \\
&\cup \left\{\frac{1}{2}m_{{\rm har},p}: j^2_{\frac{1}{2}}/l^{2}\right\}_{k=1}^{\infty} \cup \left\{\frac{1}{2}m_{{\rm har},p}: j^2_{-\frac{1}{2}}/l^{2}\right\}_{k=1}^{\infty}\cup \left\{ m_{{\rm har},p-1}:\hat j^2_{|\alpha_{p-1}|,\alpha_{p-1},k}/l^{2}\right\}_{k=1}^{\infty},
\end{align*}
\begin{align*}
\Sp_+ \Delta_{\rm abs, ideal}^{(p+1)} &= \left\{m_{{\rm cex},p+1,n} : \hat j^{2}_{\mu_{p+1,n},\alpha_{p+1},k}/l^{2}\right\}_{n,k=1}^{\infty}
\cup
\left\{m_{{\rm cex},p,n} : \hat j^{2}_{\mu_{p,n},\alpha_{p},k}/l^{2}\right\}_{n,k=1}^{\infty} \\
&\cup \left\{m_{{\rm cex},p,n} : j^{2}_{\mu_{p,n},k}/l^{2}\right\}_{n,k=1}^{\infty} \cup \left\{m _{{\rm cex},p-1,n} :
j^{2}_{\mu_{p-1,n},k}/l^{2}\right\}_{n,k=1}^{\infty} \\
&\cup \left\{ m_{{\rm har},p+1}:\hat j^2_{|\alpha_{p+1}|,\alpha_{p+1},k}/l^{2}\right\}_{k=1}^{\infty} \cup \left\{\frac{1}{2}m_{{\rm har},p}: 
j^2_{-\frac{1}{2}}/l^{2}\right\}_{k=1}^{\infty} \cup \left\{\frac{1}{2}m_{{\rm har},p}: j^2_{\frac{1}{2}}/l^{2}\right\}_{k=1}^{\infty},
\end{align*}
where  the $j_{\mu,k}$ are the positive zeros of the Bessel function $J_{\mu}(x)$,  the $\hat j_{\mu,c,k}$ are the positive zeros of
the function $\hat J_{\mu,c}(x) = c J_\mu (x) + x J'_\mu(x)$,  $c\in \R$, $\alpha_q$ and $\mu_{q,n}$ are defined in Lemma \ref{l2}.
\end{lem}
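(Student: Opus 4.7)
The plan is to apply the standard strategy for cones: Lemma \ref{l2} gives a complete list of formal solutions $\psi^{(q)}_{\pm,*,n,\lambda}$ of $\Delta u=\lambda^2 u$ on $C_l(W)$, and I will first impose square integrability at the tip $x=0$ to select admissible branches, then impose absolute (and, in middle degree, ideal) boundary conditions at $x=l$. Using the asymptotic $J_\nu(\lambda x)\sim (\lambda x/2)^\nu/\Gamma(\nu+1)$ together with the cone measure $x^m\, dx\, d{\rm vol}_W$, a direct check shows that the $\psi_-$ branches are not in $L^2$ near $x=0$ unless the Bessel index equals $\tfrac12$. Since $\alpha_q$ is an integer when $m=2p-1$ and a half-integer when $m=2p$, this borderline case occurs precisely at middle degree $q=p$ of an even-dimensional section, exactly where the Cheeger ideal BC enter.

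Away from the middle degree only the $\psi_{+,*,n,\lambda}$ survive. For a form $\omega=f_1(x)\omega_1+f_2(x)\, dx\wedge\omega_2$ the absolute conditions $i^*(\star\omega)=i^*(\star d\omega)=0$ at $x=l$, combined with the formulas for $\star$ and $d$ displayed at the start of Section \ref{s1}, reduce to $f_2(l)=0$ on normal components and a first-order Robin-type vanishing on $f_1$ at $x=l$. Coupling this with the identity $\partial_x(x^\alpha J_\mu(\lambda x))=x^{\alpha-1}\hat J_{\mu,\alpha}(\lambda x)$, each of the six families in Lemma \ref{l2} reduces to a clean Bessel zero equation: $\psi_{+,1,n,\lambda}$ produces $\hat J_{\mu_{q,n},\alpha_q}(\lambda l)=0$; the pair $\psi_{+,2,n,\lambda},\psi_{+,3,n,\lambda}$ (both built on $\varphi^{(q-1)}_{{\rm cex},n}$, with the relation $\tilde d^\dagger\tilde d\varphi^{(q-1)}_{{\rm cex},n}=\lambda_{q-1,n}\varphi^{(q-1)}_{{\rm cex},n}$ simplifying the normal component of $\psi_{+,3}$) yields $\hat J_{\mu_{q-1,n},\alpha_{q-1}}(\lambda l)=0$ from $\psi_{+,2}$ and $J_{\mu_{q-1,n}}(\lambda l)=0$ from $\psi_{+,3}$; $\psi_{+,4,n,\lambda}$ gives $J_{\mu_{q-2,n}}(\lambda l)=0$; and the harmonic families $\psi_{+,E,\lambda}$, $\psi_{+,O,\lambda}$ produce the two $\hat J$-contributions with indices $|\alpha_q|$ and $|\alpha_{q-1}|$. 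Each family enters with the multiplicity of its underlying eigenform of $\tilde\Delta$ on $W$, giving the stated factors $m_{{\rm cex},q',n}$ and $m_{{\rm har},q'}$.

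For the middle degree $q=p$ when $m=2p$, both $x^{1/2}J_{1/2}(\lambda x)=\sqrt{2/(\pi\lambda)}\sin(\lambda x)$ and $x^{1/2}J_{-1/2}(\lambda x)=\sqrt{2/(\pi\lambda)}\cos(\lambda x)$ lie in $L^2$ near $x=0$, so I will invoke the ideal BC of the Remark: on the splitting $\H^p(W)=V_a\oplus V_r$, take Neumann (the cosine branch) on $V_a$ and Dirichlet (the sine branch) on $V_r$. The absolute BC at $x=l$ then read $\sin(\lambda l)=0$ on the $V_a$-part (so $\lambda l=k\pi=j_{1/2,k}$) and $\cos(\lambda l)=0$ on the $V_r$-part (so $\lambda l=(k-\tfrac12)\pi=j_{-1/2,k}$), each contributing with multiplicity $\tfrac12 m_{{\rm har},p}$ since $\dim V_a=\dim V_r=\tfrac12 m_{{\rm har},p}$. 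A parallel analysis of the $\psi^{(p+1)}_{+,O,\lambda}$ family produces the same split inside $\Sp_+\Delta^{(p+1)}_{{\rm abs,ideal}}$. The main technical obstacles will be the two-branch system arising in types $\psi_2,\psi_3$ and the careful identification of the $J_{\pm 1/2}$ branches with Cheeger's ideal BC in middle degree; completeness of the six families in $L^2(\Omega^\bu(C_l(W)))$ follows from the fibrewise Hodge decomposition on $W$ as in Cheeger's original argument.
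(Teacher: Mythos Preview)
Your outline is the same strategy the paper invokes (and defers to \cite{HS4} for details): take Cheeger's six families of formal solutions from Lemma \ref{l2}, impose square integrability at the tip together with the chosen self-adjoint extension, then read off the absolute BC at $x=l$ (plus the ideal BC in middle degree when $m=2p$). Your identification of the BC with $\hat J_{\mu,\alpha}(\lambda l)=0$ and $J_{\mu}(\lambda l)=0$ for the various types, and the $J_{\pm 1/2}$ split in middle degree, are all correct; in particular the type-$2$/type-$3$ system genuinely diagonalises as you claim, since subtracting the two BC equations gives $b(\lambda l)^2 J_{\mu}(\lambda l)=0$.

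One point deserves sharpening. Your claim that ``the $\psi_-$ branches are not in $L^2$ near $x=0$ unless the Bessel index equals $\tfrac12$'' is not literally true: with the cone measure one finds $\int_0 x\,|J_{-\mu}(\lambda x)|^2\,dx<\infty$ whenever $\mu<1$, so for small eigenvalues $\lambda_{q,n}$ (giving $\mu_{q,n}<1$), or for harmonic indices $|\alpha_q|<1$, both branches are square integrable. What actually singles out the $J_{+\mu}$ branch in those limit-circle cases is the \emph{choice of self-adjoint extension} (the Friedrichs extension, equivalently Cheeger's $L^2$--de Rham domain), not square integrability alone. The middle degree $q=p$ for $m=2p$ is special not because it is the only place both branches are $L^2$, but because it is the place where the extension is not canonically fixed by the de Rham complex and the ideal BC of Remark \ref{ideal} must be invoked. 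This does not change any of your eigenvalue equations; it only affects the justification for discarding the minus solutions.
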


\begin{lem}\label{l3b}
The positive part of the spectrum of the Hodge-Laplace operator  on $C_{[l_1,l_2]}(W)$, $l_1>0$, with
relative boundary conditions on $\b_1 C_{[l_1,l_2]}(W)$ and absolute boundary conditions on $\b_2 C_{[l_1,l_2]}(W)$ is ($0\leq q\leq m+1$):
\begin{align*}
\Sp_+ \Delta_{\rm rel~\b_1,abs~\b_2}^{(q)}  &= \left\{m_{{\rm cex},q,n} : \hat f^{2}_{\mu_{q,n},\alpha_q,k}(l_1,l_2)\right\}_{n,k=1}^{\infty}
\cup
\left\{m_{{\rm cex},q-1,n} : \hat f^{2}_{\mu_{q-1,n},\alpha_{q-1},k}(l_1,l_2)\right\}_{n,k=1}^{\infty} \\
&\cup \left\{m_{{\rm cex},q-1,n} : \hat f^{2}_{\mu_{q-1,n},-\alpha_{q-1},k}(l_2,l_1)\right\}_{n,k=1}^{\infty} \cup \left\{m _{{\rm cex},q-2,n} :
\hat f^{2}_{\mu_{q-2,n},-\alpha_{q-2},k}(l_2,l_1)\right\}_{n,k=1}^{\infty} \\
&\cup \left\{m_{{\rm har},q}:\hat f^2_{|\alpha_q|,\alpha_q,k}(l_1,l_2)\right\}_{k=1}^{\infty} \cup \left\{ m_{{\rm har},q-1}:\hat
f^2_{|\alpha_{q-1}|,\alpha_{q-1},k}(l_1,l_2)\right\}_{k=1}^{\infty},
\end{align*}
where  the $\hat f_{\mu,c,k}(a,b)$ are the zeros of the function 
\[
\hat F_{\mu,c} (x;l_1,l_2)=
J_{\mu}(l_1 x) (c Y_{\mu}(l_2 x) + l_2 x Y'_{\mu}(l_2 x)) - Y_{\mu}(l_1 x) (c J_{\mu}(l_2 x) + l_2 x J'_{\mu}(l_2 x)) ,
\]
with real $c\not=0$, and $\alpha_q$ and $\mu_{q,n}$ as defined in Lemma \ref{l2}.
\end{lem}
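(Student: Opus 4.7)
\textbf{Proof proposal for Lemma \ref{l3b}.} The plan is to mimic the analysis of the cone case in Lemma \ref{l3}, but now on a frustum $C_{[l_1,l_2]}(W)$ with $l_1>0$. The key difference is that both boundaries now lie at strictly positive radial coordinate, so square integrability at the tip no longer discards half of the Bessel solutions; both $J_\mu$ and $Y_\mu$ must be retained, and both boundary conditions must be imposed as genuine constraints at $x=l_1$ and $x=l_2$.

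First I would write the general radial part of a solution in each of the six families $\psi^{(q)}_{\pm,j,n,\lambda}$ of Lemma \ref{l2} as a linear combination $A J_\mu(\lambda x)+B Y_\mu(\lambda x)$ (with the appropriate $\mu\in\{\mu_{q,n},\mu_{q-1,n},\mu_{q-2,n},|\alpha_q|,|\alpha_{q-1}|\}$ and prefactor $x^{\alpha}$). Using the expressions for $\star$, $d$, $d^\dagger$ at the start of Section \ref{s1}, I would then translate the relative boundary condition at $\b_1$ (normal part of $\omega$ and of $d^\dagger\omega$ vanish) and the absolute boundary condition at $\b_2$ (normal part of $\star\omega$ and of $\star d\omega$ vanish) into conditions on $(A,B)$ at $x=l_1$ and $x=l_2$. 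The calculation is the same as in \cite{HS4}, except that the condition previously imposed by square integrability at the tip is replaced by the relative boundary condition at $l_1$.

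For each type the computation yields a $2\times 2$ homogeneous linear system in $(A,B)$; the eigenvalues $\lambda$ are those for which its determinant vanishes. For types $1$ and $E$ (coexact $q$-forms and harmonic $q$-forms) the relative condition at $l_1$ reduces to the Dirichlet-type equation $A J_\mu(\lambda l_1)+B Y_\mu(\lambda l_1)=0$, while the absolute condition at $l_2$ reduces to $A\hat J_{\mu,\alpha_q}(\lambda l_2)+B\hat Y_{\mu,\alpha_q}(\lambda l_2)=0$; the determinant is exactly $\hat F_{\mu,\alpha_q}(\lambda;l_1,l_2)$, so the eigenvalues are the zeros $\hat f_{\mu,\alpha_q,k}(l_1,l_2)^2$, and similarly for type $2$ with index shift to $\mu_{q-1,n}$, $\alpha_{q-1}$. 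For types $3$, $4$ and $O$ (containing the $dx\wedge$ factor) the role of the two boundaries is exchanged: the relative condition at $l_1$ becomes a $\hat J$-type condition while the absolute condition at $l_2$ becomes the Dirichlet-type condition, and the effective weight in the $\hat J$ is $-\alpha_{q-1}$, $-\alpha_{q-2}$, or $-|\alpha_{q-1}|$ because of the different power of $x$ appearing in front of the Bessel function in Lemma \ref{l2}. The determinant is then $\hat F_{\mu,-\alpha}(\lambda;l_2,l_1)$, producing the factors $\hat f_{\mu,-\alpha,k}(l_2,l_1)^2$ in the statement.

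Collecting the six contributions, duplicated by a mild case analysis when some $\mu$ is an integer (where the $-$ solutions in Lemma \ref{l2} are replaced by the logarithmic ones, but the boundary determinant is still given by $\hat F_{\mu,c}$ via the Wronskian relation between $J_\mu,Y_\mu$), one obtains the spectrum exactly as stated. The main obstacle is the bookkeeping in the $dx\wedge$ families: keeping track of whether $\b_x$ acts on $x^{\alpha_{q-1}}J_\mu$ or on $x^{-\alpha_{q-1}}J_\mu$, and of how the splitting of $\omega$ into its $\omega_1$ and $dx\wedge\omega_2$ parts swaps the roles of the relative and absolute conditions, so that the arguments in $\hat f_{\mu,c,k}(\cdot,\cdot)$ get interchanged and the sign of $c$ flips in precisely the pattern stated.
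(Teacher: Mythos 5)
Your overall strategy — substituting $A J_\mu(\lambda x)+B Y_\mu(\lambda x)$ into the six families of Lemma \ref{l2}, imposing the relative condition at $x=l_1$ and the absolute condition at $x=l_2$, and reading off a $2\times 2$ boundary determinant for each family — is the right one and is what the paper relies on (it points to \cite{HS4} for the details). For Types $1$, $2$, $E$, $3$, $4$ your bookkeeping is correct. But your classification of Type $O$ is wrong, and with it the corresponding entry of the spectrum.

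You group Types $3$, $4$, $O$ together as ``the $dx\wedge$ families'' and assert that for all three the boundaries are swapped, giving a factor $\hat F_{\mu,-\alpha}(\cdot;l_2,l_1)$. For Type $O$ this contradicts the stated lemma, where the $m_{{\rm har},q-1}$ contribution is $\hat f^{2}_{|\alpha_{q-1}|,\alpha_{q-1},k}(l_1,l_2)$ — unswapped order, weight $+\alpha_{q-1}$. The reason your heuristic fails is the $\b_x$ sitting inside the Type $O$ solution: $\psi^{(q)}_{O}=\b_x\!\left(x^{\alpha_{q-1}}g(\lambda x)\right)dx\wedge\varphi_{\rm har}^{(q-1)}$. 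Because of it, the absolute condition at $l_2$ (vanishing of the normal component $f_2=\b_x(x^{\alpha_{q-1}}g)$) reduces to $\alpha_{q-1}g(l_2)+l_2 g'(l_2)=0$, a $\hat J$-type condition — not the Dirichlet condition $g(l_2)=0$ that you get for Type $4$, where no $\b_x$ occurs. Dually, the relative condition at $l_1$, i.e.\ the vanishing of the tangential part of $d^\dagger\omega$, simplifies via the Bessel equation: one checks that $(m-2q+2)x^{-1}f_2+\b_x f_2=-\lambda^2 x^{\alpha_{q-1}}g(x)$, so the condition is just $g(l_1)=0$, Dirichlet. Hence Type $O$ gives the determinant $\hat F_{|\alpha_{q-1}|,\alpha_{q-1}}(\lambda;l_1,l_2)$, exactly as stated. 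The clean way to see which way each family goes is not by counting $dx\wedge$'s but by the exact/coexact pairing under $d$: one has $\psi^{(q)}_{O}=d\,\psi^{(q-1)}_{E}$ and $\psi^{(q)}_{2}=d\,\psi^{(q-1)}_{1}$, so Types $O$, $E$, $2$, $1$ all inherit the $(l_1,l_2)$ behaviour, while the dual pair $3$, $4$ gets the swapped $(l_2,l_1)$ behaviour with $-\alpha$. As written, your argument would produce an incorrect $m_{{\rm har},q-1}$ term.
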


\begin{rem} Application of the BC in Lemma \ref{l2} would give the zeros of the function
\[
J_{\mu}(l_1 x) (c J_{-\mu}(l_2 x) + l_2 x J'_{-\mu}(l_2 x)) - J_{-\mu}(l_1 x) (c J_{\mu}(l_2 x) + l_2 x J'_{\mu}(l_2 x))
=-\sin(\pi\mu)  \hat F_{\mu,c} (x;l_1,l_2),
\]
however, for the following analysis, it is much more convenient to work with the function $Y$ instead that with the function $J_-$.
\end{rem}

\begin{lem}\label{al1}
The positive part of the spectrum of the Hodge-Laplace operator  on $C_{[l_1,l_2]}(W)$, $l_1>0$, with absolute boundary conditions is ($0\leq q\leq m+1$):
\begin{align*}
\Sp_+ \Delta_{\rm abs}^{(q)}  &= \left\{m_{{\rm cex},q,n} : \hat v^{2}_{\mu_{q,n},\alpha_q,k}\right\}_{n,k=1}^{\infty}
\cup
\left\{m_{{\rm cex},q-1,n} : \hat v^{2}_{\mu_{q-1,n},\alpha_{q-1},k}\right\}_{n,k=1}^{\infty} \\
&\cup \left\{m_{{\rm cex},q-1,n} : v^{2}_{\mu_{q-1,n},k}\right\}_{n,k=1}^{\infty} \cup \left\{m _{{\rm cex},q-2,n} :
v^{2}_{\mu_{q-2,n},k}\right\}_{n,k=1}^{\infty} \\
&\cup \left\{m_{{\rm har},q}:\hat v^2_{|\alpha_q|,\alpha_q,k}\right\}_{k=1}^{\infty} \cup \left\{ m_{{\rm har},q-1}:\hat
v^2_{|\alpha_{q-1}|,\alpha_{q-1},k}\right\}_{k=1}^{\infty},
\end{align*}
where  the $v_{\mu,k}$ are the zeros of the function 
\[
\Upsilon_{\mu} (x)=
J_{\mu}(l_2x) Y_{\mu}(l_1 x) - Y_{\mu}(l_1 x) J_{\mu}(l_2 x),
\]
and   the $\hat v_{\mu,c,k}$ are the zeros of the function 
\[
\hat \Upsilon_{\mu,c} (x)=(c J_{\mu}(l_2 x) + l_2 x J'_{\mu}(l_2 x))
 (c Y_{\mu}(l_1 x) + l_1 x Y'_{\mu}(l_1 x)) -  (c Y_{\mu}(l_2 x) + l_2 x Y'_{\mu}(l_2 x)) (c J_{\mu}(l_1 x) + l_1 x J'_{\mu}(l_1 x)),
\]
with real $c\not=0$, and $\alpha_q$ and $\mu_{q,n}$ as defined in Lemma \ref{l2}.
\end{lem}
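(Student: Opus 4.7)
The proof is structurally identical to that of Lemmas \ref{l3} and \ref{l3b}, with the only change being the replacement of relative boundary conditions at $x=l_1$ by absolute ones. Since $l_1>0$, square integrability at the apex no longer selects a preferred independent solution; for each of the six types of formal eigenforms in Lemma \ref{l2} I therefore form the general eigenform on the frustum by substituting $J_{\pm\mu}(\lambda x)$ with an arbitrary combination $Z_\mu(\lambda x):=A\,J_\mu(\lambda x)+B\,Y_\mu(\lambda x)$. Using $\{J_\mu,Y_\mu\}$ as the fundamental system is uniform in $\mu$ (including integer values), so the logarithmic modification mentioned in Lemma \ref{l2} needs no separate treatment. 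Writing $\omega(x,y)=f_1(x)\omega_1(y)+f_2(x)\,dx\wedge\omega_2(y)$ and using the formulas for $d\omega$ and $\star\omega$ recalled at the beginning of Section \ref{s1}, absolute BC at $x=l_j$ with outward normal $\pm\partial_x$ become
\[
f_2(l_j)=0,\qquad \partial_x f_1(l_j)=0,\qquad j=1,2.
\]
These four scalar equations give, for each type, a homogeneous $2\times 2$ linear system in $(A,B)$ whose nontrivial solvability is the vanishing of a $2\times 2$ determinant in $\lambda$, to be identified with $\Upsilon_\mu(\lambda)$ or $\hat\Upsilon_{\mu,c}(\lambda)$.

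The identification is case by case. For types 1 and $E$, where $f_2\equiv0$ and $f_1(x)=x^{\alpha_q}Z_\mu(\lambda x)$, the identity
\[
\partial_x(x^{\alpha_q}Z_\mu(\lambda x))=x^{\alpha_q-1}\bigl(\alpha_q Z_\mu(\lambda x)+\lambda x Z'_\mu(\lambda x)\bigr)
\]
turns each BC into $\alpha_q Z_\mu(l_j\lambda)+l_j\lambda Z'_\mu(l_j\lambda)=0$, and the determinant of the resulting system is exactly $\hat\Upsilon_{\mu,\alpha_q}(\lambda)$, with $\mu=\mu_{q,n}$ or $|\alpha_q|$. For type 2 (and symmetrically $O$) the $dx$-coefficient is literally $\partial_x$ applied to the tangential coefficient, so both BC coincide and again produce $\hat\Upsilon_{\mu_{q-1,n},\alpha_{q-1}}(\lambda)$. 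For types 3 and 4, one has $f_2(x)=x^{\alpha-1}Z_\mu(\lambda x)$ up to a multiplicative constant, so $f_2(l_j)=0$ directly gives $Z_\mu(l_j\lambda)=0$ at both endpoints, whose determinant is $\Upsilon_\mu(\lambda)$; a short calculation using the Bessel equation $(\lambda x)^2 Z''_\mu(\lambda x)+\lambda x\,Z'_\mu(\lambda x)+((\lambda x)^2-\mu^2)Z_\mu(\lambda x)=0$ shows that the auxiliary condition $\partial_x f_1(l_j)=0$ is automatically satisfied whenever $Z_\mu(l_j\lambda)=0$, so the $f_2$-condition alone determines the eigenvalues.

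The main obstacle is bookkeeping rather than conceptual: one must verify type by type that the two absolute BC at each endpoint collapse to a single scalar condition on $Z_\mu$ (plain for types 3, 4; $\hat{}$-type for types 1, 2, $E$, $O$), track the shift of section degree (degree $q$ for types 1 and $E$; degree $q-1$ for types 2, 3 and $O$; degree $q-2$ for type 4), and attach the correct multiplicities $m_{{\rm cex},\cdot,n}$ and $m_{{\rm har},\cdot}$ inherited from the spectral resolution of $\tilde\Delta$ on $W$. Collecting the six resulting families yields exactly the spectrum stated in Lemma \ref{al1}.
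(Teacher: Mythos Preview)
Your proposal is correct and follows exactly the approach the paper indicates: the paper does not give a detailed proof of Lemma \ref{al1} but states that it follows by applying the boundary conditions to the solutions described in Lemma \ref{l2} (referring to \cite{HS4} for details), and you carry this out explicitly and correctly. Your key computation for type 3---that $\partial_x f_1(l_j)$ is a multiple of $Z_\mu(\lambda l_j)$ via the Bessel equation, so the $f_2$-condition alone determines the eigenvalues---is the only nontrivial step and is handled properly; the minor imprecision in the phrasing for type $O$ (where $f_1\equiv 0$, so $f_2$ is not literally $\partial_x f_1$) does not affect the outcome.
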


\section{Simplifying the torsion zeta function}
\label{s2}

We define the  {\it torsion zeta function} by
\[
t_{M^{(n)}}(s)=\frac{1}{2}\sum_{q=1}^{n} (-1)^q q \zeta(s,\Delta^{(q)}),
\]
and the  analytic torsion is:
\[
\log T(M^{(n)},g)=t_{M^{(n)}}'(0).
\]

In this section, we consider the torsion zeta function   for the cone with absolute BC and for the frustum with mixed and absolute BC respectively. We proceed to some simplifications of it. We present the proof in the case of the cone, the proof for the frustum is analogous.

\begin{lem}\label{l000} The torsion zeta function of the cone is:
\[
t_{\rm Cone}^{(m)}(s)=t_0^{(m)}(s)+t_1^{(m)}(s)+t^{(m)}_2(s)+t^{(m)}_3(s),
\]
with
\begin{align*}
t_0^{(m)}(s)&= \frac{l^{2s}}{2}\sum^{\left[\frac{m}{2}\right]-1}_{q=0} (-1)^q \left((Z_q(s)-\hat Z_{q,+}(s))
+(-1)^{m-1}(Z_q(s)-\hat Z_{q,-}(s))\right),\\
t_1^{(2p-1)}(s)&= (-1)^{p-1}\frac{l^{2s}}{2}  \left(  Z_{p-1}(s)-\hat Z_{p-1,0}(s) \right),&t_1^{(2p)}(s)&=0\\
t_2^{(m)}(s)&= \frac{l^{2s}}{2} \sum_{q=0}^{\left[\frac{m-1}{2}\right]} (-1)^{q+1}  m_{{\rm har},q}\left(z_{q-1,-}(s)+(-1)^m z_{q,-}(s)\right),\\
t_3^{(2p)}(s)&= (-1)^{p+1} m_{{\rm har},p}\frac{l^{2s}}{4}\left(z_{p,+}(s)+z_{p,-}(s)\right),&t_3^{(2p-1)}(s)&=0,
\end{align*}
where $\hat Z_{q,0}$ denotes $\hat Z_{q,\pm}$ with $\alpha_q=0$, and
\begin{align*}
Z_q(s)&=\sum^{\infty}_{n,k=1} m_{{\rm cex},q,n}j^{-2s}_{\mu_{q,n},k},&
\hat Z_{q,\pm}(s)&=\sum^{\infty}_{n,k=1} m_{{\rm cex},q,n}\hat j^{-2s}_{\mu_{q,n},\pm\alpha_q,k},
&z_{q,\pm}(s)&=\sum_{k=1}^{\infty} j^{-2s}_{\pm\alpha_{q},k}.
\end{align*}

\end{lem}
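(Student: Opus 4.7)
The plan is to insert the spectral description of $\Delta^{(q)}_{\rm abs, ideal}$ from Lemma \ref{l3} into the definition of $t_{\rm Cone}^{(m)}(s)$, reorganize the resulting six (or more, in the even case) families by the degree of the underlying coexact/harmonic form on $W$, and then use two elementary ingredients to collapse the result: (i) the telescoping identities $(-1)^q q+(-1)^{q+1}(q+1)=-(-1)^q$ and $(-1)^q q+2(-1)^{q+1}(q+1)+(-1)^{q+2}(q+2)=0$, which arise because the spectrum in degree $q$ involves coexact forms of degrees $q$, $q-1$ and $q-2$; and (ii) Hodge–Poincar\'e duality on the section $W$, which yields $\mu_{q,n}=\mu_{m-q-1,n}$, $m_{{\rm cex},q,n}=m_{{\rm cex},m-q-1,n}$, and $\alpha_{m-q-1}=-\alpha_q$, and therefore $Z_q=Z_{m-q-1}$ and $\hat Z_{q,+}=\hat Z_{m-q-1,-}$.

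First I would treat the two ``non-hat'' families $Z_{q-1}$ and $Z_{q-2}$ appearing in $\Sp_+\Delta^{(q)}$ together. Shifting the index so that both contributions are expressed at the same degree $q'$, their coefficients in $\tfrac12\sum(-1)^q q\,\zeta(s,\Delta^{(q)})$ telescope by (i) to produce exactly the $(-1)^{q}Z_{q}$ contribution. Applying the duality (ii) to fold the upper half of the range onto the lower one then produces the factors $(1+(-1)^{m-1})$ in front of $Z_q$ and explains why the two signs $\hat Z_{q,+}$ and $\hat Z_{q,-}$ appear symmetrically in $t_0^{(m)}(s)$. The self‑paired term $q=p-1$ (for which $\alpha_{p-1}=0$ in odd dimension, so $\hat Z_{p-1,+}=\hat Z_{p-1,-}=\hat Z_{p-1,0}$) gets left over from the folding and produces the single term $t_1^{(2p-1)}$.

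Next I would treat the two harmonic families $\{m_{\mathrm{har},q}:\hat j^2_{|\alpha_q|,\alpha_q,k}\}$ and $\{m_{\mathrm{har},q-1}:\hat j^2_{|\alpha_{q-1}|,\alpha_{q-1},k}\}$ (or $\alpha_q$ in the odd case). Using the Bessel recurrence $cJ_\nu(x)+xJ'_\nu(x)=xJ_{\nu-1}(x)$ when $c=\nu$, and the analogous identity $-\nu J_\nu(x)+xJ'_\nu(x)=-xJ_{\nu+1}(x)$, the function $\hat J_{|\alpha|,\alpha}$ factors out an $x$ and its positive zeros become the zeros $j_{\pm\alpha,k}$; this converts the harmonic Bessel sums into the $z_{q,\pm}$ of the statement. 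Applying the same telescoping and duality argument as above to the two harmonic families, together with the extra combinatorial identity that handles the $m_{{\rm har},q-1}$ degree shift, produces $t_2^{(m)}(s)$; the signs $(-1)^m$ in the formula come from the folding under (ii) exactly as for $t_0$.

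Finally I would deal with the peculiarities of the even case $m=2p$. The ideal boundary conditions modify $\Sp_+\Delta^{(p)}$ and $\Sp_+\Delta^{(p+1)}$ so that the harmonic contribution in middle degree is split into two halves of multiplicity $\tfrac12 m_{{\rm har},p}$, with Bessel indices $\pm\tfrac12$, which are precisely the indices $|\alpha_p|=\tfrac12$ and $|\alpha_{p-1}|=\tfrac12$. Isolating this middle‑degree contribution from the rest of the harmonic calculation produces $t_3^{(2p)}$ and simultaneously accounts for the vanishing of $t_1^{(2p)}$ (there is no self‑paired degree under the duality $q\leftrightarrow m-q-1=2p-q-1$ when $m=2p$). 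The main obstacle in executing the plan is entirely bookkeeping: handling the boundary values $q=0,1$ of the telescoping, the self‑paired degree (when $\alpha=0$), the sign change between odd and even $m$, and the ideal‑BC contribution at degrees $p$ and $p+1$ without double counting. No new analytic input is needed; all steps are formal rearrangements of the spectra listed in Lemma \ref{l3}.
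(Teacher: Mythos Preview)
Your plan is correct and matches the paper's proof essentially step for step: the paper first performs the telescoping you describe (it simply writes ``rearranging the sums'') to arrive at $\sum_q(-1)^q(Z_q-\hat Z_{q,+})$ and the harmonic sum, then folds by Hodge duality $\mu_{q,n}=\mu_{m-1-q,n}$, $\alpha_{m-1-q}=-\alpha_q$ to obtain $t_0^{(m)}$ and the leftover middle term $t_1^{(2p-1)}$, and finally uses the Bessel recurrences $zJ'_\nu=zJ_{\nu-1}-\nu J_\nu$ and $zJ'_\nu=-zJ_{\nu+1}+\nu J_\nu$ together with Poincar\'e duality $m_{{\rm har},q}=m_{{\rm har},m-q}$ to produce $t_2^{(m)}$ and $t_3^{(2p)}$. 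One small correction to your write-up: the recurrence sends $\hat J_{|\alpha_q|,\alpha_q}$ to $\pm xJ_{\pm\alpha_{q-1}}$, so the resulting zeros are $j_{\pm\alpha_{q-1},k}$ rather than $j_{\pm\alpha_q,k}$; this index shift by one is exactly what produces the $z_{q-1,-}$ term in $t_2^{(m)}$.
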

\begin{proof} Rearranging the sums and isolating the case $q=p$, $\alpha_p=\frac{1}{2}$ when $m=2p$, we obtain:
if $m=2p-1$,
\begin{align*}
t_{\rm Cone}^{(2p-1)}(s)=& \frac{l^{2s}}{2} \sum^{2p-1}_{q=0} (-1)^q \sum^{\infty}_{n,k=1} m_{{\rm cex},q,n}\left( j^{-2s}_{\mu_{q,n},k} - \hat j^{-2s}_{\mu_{q,n},\alpha_q,k}\right)+\frac{l^{2s}}{2}\sum^{2p-1}_{q=0} (-1)^{q+1} \sum_{k=1}^\infty m_{{\rm har},q} \hat  j_{|\alpha_q|,\alpha_q,k}^{-2s},
\end{align*}
if $m=2p$, 
\begin{align*}
t_{\rm Cone}^{(2p)}(s)=& \frac{l^{2s}}{2} \sum^{2p-1}_{q=0} (-1)^q \sum^{\infty}_{n,k=1} m_{{\rm cex},q,n}\left( j^{-2s}_{\mu_{q,n},k} - \hat j^{-2s}_{\mu_{q,n},\alpha_q,k}\right)+\frac{l^{2s}}{2}\sum^{2p}_{q=0,q\not = p} (-1)^{q+1} \sum_{k=1}^\infty m_{{\rm har},q} \hat j_{|\alpha_q|,\alpha_q,k}^{-2s}\\
&+ (-1)^{p+1} \frac{l^{2s}}{4}\sum_{k=1}^{\infty} m_{{\rm har},p}\left(j_{\frac{1}{2},k}^{-2s}+j_{-\frac{1}{2},k}^{-2s}\right).
\end{align*}

Using  Hodge duality 
on coexact $q$-forms on the section, we obtain the following identities:
\begin{align*}
m_{{\rm cex},q,n}&= m_{{\rm cex},m-1-q,n},\\
\lambda_{q,n} & = \lambda_{m-1-q,n},\\ 
\alpha_{q} & =-\alpha_{m-1-q},\\
\mu_{q,n} &=\mu_{m-1-q,n},
\end{align*}
and hence the first term in the previous equations reads:
\begin{align}
\nonumber\frac{l^{2s}}{2} \sum^{m-1}_{q=0} (-1)^q &\sum^{\infty}_{n,k=1} m_{{\rm cex},q,n}\left( j^{-2s}_{\mu_{q,n},k} - \hat j^{-2s}_{\mu_{q,n},\alpha_q,k}\right)\\
\label{main}=& \frac{l^{2s}}{2}\sum^{\left[\frac{m}{2}\right]-1}_{q=0} (-1)^q \sum^{\infty}_{n,k=1} m_{{\rm cex},q,n} 
\left(\left( j^{-2s}_{\mu_{q,n},k} - \hat j^{-2s}_{\mu_{q,n},\alpha_q,k}\right)
+(-1)^{m-1}\left( j^{-2s}_{\mu_{q,n},k} - \hat j^{-2s}_{\mu_{q,n},-\alpha_q,k}\right)\right)\\
\nonumber&+\left({\rm term~with~}q=\left[\frac{m}{2}\right]\right),
\end{align}
where the last term appears only if $\left[\frac{m}{2}\right]$ is an integer, namely if $m=2p-1$ is odd, and has the following form. Since when $q=p-1$, $m=2p-1$, $\alpha_{p-1}=0$, (and $j_{\nu,0,k}=\lambda_{p-1,n}=j'_{\nu,k}$), then
\[
\left({\rm term~with~}q=\left[\frac{m}{2}\right]\right)= (-1)^{p-1}\frac{l^{2s}}{2} \sum^{\infty}_{n,k=1} m_{{\rm cex},p-1,n} \left( j^{-2s}_{\mu_{p-1,n},k} -\hat j_{\mu_{p-1,n},0,k}^{-2s} \right).
\]

The sign in front to the second term in equation (\ref{main}) is the key difference between the even and the odd case.

Next consider the term involving the harmonics, i.e. the sums
\begin{align*}
k^{(2p-1)}(s)&=\frac{1}{2}\sum^{2p-1}_{q=0} (-1)^{q+1} \sum_{k=1}^\infty m_{{\rm har},q} \hat j_{|\alpha_q|,\alpha_q,k}^{-2s},&
k^{(2p)}(s)&=\frac{1}{2}\sum^{2p}_{q=0,q\not=p} (-1)^{q+1} \sum_{k=1}^\infty m_{{\rm har},q} \hat j_{|\alpha_q|,\alpha_q,k}^{-2s}.
\end{align*} 

Consider the function 
\[
\hat J_{|\alpha_q|,\alpha_q} (x)=
\alpha_q J_{|\alpha_q|}( x) + l x J'_{|\alpha_q|}(x) ,
\]

Since
\[
zZ'_{\mu}(z) = -z Z_{\mu+1}(z) + \mu Z_\mu(z),\;\;{\rm and}\;\;zZ'_{\mu}(z) = z Z_{\mu-1}(z) - \mu Z_\mu(z),
\] 
where $Z$ is either $J_+$ or $J_-$, it follows that 
\begin{align*}
\hat Z_{-\alpha_q,\alpha_q}(z) &:= \alpha_q Z_{-\alpha_q}(z) +z Z'_{-\alpha_q}(z)= -z Z_{-\alpha_q + 1}(z) =- zZ_{-\alpha_{q-1}}(z),\\
\hat Z_{\alpha_q,\alpha_q}(z) &:= \alpha_q Z_{\alpha_q}(z) +z Z'_{\alpha_q}(z)=zZ_{\alpha_q -1}(z)=zZ_{\alpha_{q -1}}(z).
\end{align*}

This permit to simplify  $\hat J_{|\alpha_q|,\alpha_q}$ as follows. If  $\alpha_q$ is negative,  then 
\begin{align*}
\hat J_{|\alpha_q|,\alpha_q} (x)
&=-x J_{-\alpha_{q-1}}(x),
\end{align*}
and this means that $\hat j_{|\alpha_{q}|,\alpha_q,k} =  j_{-\alpha_{q-1},k}$, for these $q$. If  is positive, then
\begin{align*}
\hat J_{|\alpha_q|,\alpha_q} (x)
&=x J_{\alpha_{q-1}}(x),
\end{align*}
and this means that $\hat j_{|\alpha_{q}|,\alpha_q,k} =  j_{\alpha_{q-1},k}$, for these $q$. When $\alpha_q=0$ (and this happens only if $m=2p-1$, $q=p-1$), we have $\hat J_{0,0} (x)=x J_{\pm 1}(x)$, and hence we can use either $j_{1,k}$ or $j_{-1,k}$. Next, if $m=2p-1$, $\alpha_q$ is negative for $0\leq q\leq p-2$, $\alpha_{p-1}=0$, and $\alpha_q$ is positive for $p\leq q\leq 2p-1$, whence
\begin{align*}
k^{(2p-1)}(s)=&\frac{1}{2} \sum_{q=0}^{p-2}
(-1)^{q+1}\sum_{k=1}^{\infty}\frac{m_{{\rm har},q}}{ j^{2s}_{-\alpha_{q-1},k}} + \frac{1}{2}(-1)^{p}
\sum_{k=1}^{\infty}\frac{m_{{\rm har},1}}{ j^{-2s}_{\alpha_{1,k}}} +\frac{1}{2}\sum_{q=p}^{2p-1}(-1)^{q+1}\sum_{k=1}^{\infty}\frac{m_{{\rm har},q}}{ j^{-2s}_{\alpha_{q-1},k} },
\end{align*}
if $m=2p$, $\alpha_q$ is negative for $0\leq q\leq p-1$,  and $\alpha_q$ is positive for $p\leq q\leq 2p-1$, whence
\begin{align*}
k^{(2p)}(s)=& \frac{1}{2}\sum_{q=0}^{p-1}
(-1)^{q+1}\sum_{k=1}^{\infty}\frac{m_{{\rm har},q}}{ j^{2s}_{-\alpha_{q-1},k}}  +\frac{1}{2}\sum_{q=p}^{2p}(-1)^{q+1}\sum_{k=1}^{\infty}\frac{m_{{\rm har},q}}{ j^{-2s}_{\alpha_{q-1},k} },
\end{align*}
and since by Poincarr\'e duality $m_{{\rm har},q}=m_{{\rm har},m-q}$, and $\alpha_{m-q}=-\alpha_{q-1}$, we have the thesis (note that when $m=2p$, $\alpha_p=\frac{1}{2}$). \end{proof}

For the frustum, we define  (for  $c>0$) the function 
\begin{align*}
F_{\nu} (x;l_1,l_2)&=  J_{\nu}(l_1 x) Y_{\nu-1}(l_2 x) - Y_{\nu}(l_1 x) J_{\nu-1}(l_2 x),
\end{align*}
and let $f_{\nu,k}(l_1,l_2)$ denote its zeros.  Then, we have the following result.

\begin{lem}\label{fit} The torsion zeta function of the frustum with mixed BC is
\[
t_{\rm Frustum, mix}^{(m)}(s)=w_0^{(m)}(s)+w_1^{(m)}(s)+w^{(m)}_2(s)+w^{(m)}_3(s),
\]
with
\begin{align*}
w_0^{(m)}(s)&= \frac{1}{2}\sum^{\left[\frac{m}{2}\right]-1}_{q=0} (-1)^q \left((\hat D_{q,-}(s;l_2,l_1)-\hat D_{q,+}(s;l_1,l_2))\right.\\
&\left.+(-1)^{m-1}(\hat D_{q,+}(s;l_2,l_1)-\hat D_{q,-}(s;l_1,l_2))\right),\\
w_1^{(2p-1)}(s)&= (-1)^{p-1}\frac{1}{2}  \left(  \hat D_{p-1,0}(s;l_2,l_1)-\hat D_{p-1,0}(s;l_1,l_2) \right),&w_1^{(2p)}(s)&=0,\\
w_2^{(m)}(s)&= \frac{1}{2} \sum_{q=0}^{\left[\frac{m-1}{2}\right]} (-1)^{q+1}  \left(d_{q}(s;l_2,l_1)+(-1)^m d_{q}(s;l_1,l_2)\right),\\
w_3^{(2p)}(s)=&(-1)^{p+1}\frac{1}{2}d_p(s;l_1,l_2),&w_3^{(2p-1)}(s)&=0, 
\end{align*}
where $\hat D_{p-1,0}$ is given either by $\hat D_{p-1,+}$ or $\hat D_{p-1,-}$, since $\alpha_{p-1}=0$, when $m=2p-1$, and
\begin{align*}
\hat D_{q,\pm}(s;l_1,l_2)&=\sum^{\infty}_{n,k=1} m_{{\rm cex},q,n} \hat f^{-2s}_{\mu_{q,n},\pm\alpha_q,k}(l_1,l_2),&d_q(s;l_1,l_2)&=m_{{\rm har},q}\sum_{k=1}^{\infty} f^{-2s}_{-\alpha_{q-1},k}(l_1,l_2).
\end{align*}

\end{lem}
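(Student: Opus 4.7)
The plan is to mirror the proof of Lemma \ref{l000} step by step, substituting the frustum spectrum from Lemma \ref{l3b} in place of the cone spectrum from Lemma \ref{l3}. The first step is to insert the eigenvalues $\hat f^{2}_{\mu_{q,n},\pm\alpha_q,k}(l_i,l_j)$ and the harmonic eigenvalues $\hat f^{2}_{|\alpha_q|,\alpha_q,k}(l_1,l_2)$ directly into the definition $t(s)=\tfrac12\sum_{q=1}^{m+1}(-1)^q q\,\zeta(s,\Delta^{(q)})$, then reorganise by the type of term. Just as in Lemma \ref{l000}, the six families in Lemma \ref{l3b} produce one family from $q$ and one from $q-1$ (etc.); after the Abel-type shift in the index, each coexact eigenform at level $q$ contributes $(-1)^q\hat D_{q,\pm}(s;\cdot,\cdot)$, and each harmonic eigenform contributes $(-1)^{q+1}$ times a single-argument series.

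Next I apply Hodge duality on $W$ exactly as in the cone proof: the identities $m_{\mathrm{cex},q,n}=m_{\mathrm{cex},m-1-q,n}$, $\mu_{q,n}=\mu_{m-1-q,n}$ and $\alpha_q=-\alpha_{m-1-q}$ fold the sum $0\le q\le m-1$ onto the range $0\le q\le [m/2]-1$, with a leftover middle term when $m=2p-1$ (where $\alpha_{p-1}=0$) that becomes $w_1^{(2p-1)}$. The crucial new feature compared with the cone is that mixed BC breaks the symmetry between the two boundary components: duality swaps the roles of $\partial_1$ and $\partial_2$, so the partner of $\hat D_{q,+}(s;l_1,l_2)$ inside the folded sum is $\hat D_{q,-}(s;l_2,l_1)$ (the $\alpha_q\mapsto -\alpha_q$ flip and the $(l_1,l_2)\mapsto(l_2,l_1)$ swap occur simultaneously). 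This yields exactly the pattern of signs $(-1)^{m-1}$ in $w_0^{(m)}$.

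For the harmonic contributions I use the Bessel identities $zZ'_\mu(z)=-zZ_{\mu+1}(z)+\mu Z_\mu(z)$ and $zZ'_\mu(z)=zZ_{\mu-1}(z)-\mu Z_\mu(z)$ applied to both $J$ and $Y$ independently. With $c=|\alpha_q|$, the combinations $cJ_{|\alpha_q|}(lx)+lxJ'_{|\alpha_q|}(lx)$ and $cY_{|\alpha_q|}(lx)+lxY'_{|\alpha_q|}(lx)$ collapse (up to a sign depending on $\sgn\alpha_q$) to $\mp lx\,J_{-\alpha_{q-1}}(lx)$ and $\mp lx\,Y_{-\alpha_{q-1}}(lx)$. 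Substituting in $\hat F_{|\alpha_q|,\alpha_q}$ produces, up to a nonzero elementary prefactor in $x$, the function $F_{-\alpha_{q-1}}(x;l_1,l_2)$ of the statement, so $\hat f_{|\alpha_q|,\alpha_q,k}(l_1,l_2)=f_{-\alpha_{q-1},k}(l_1,l_2)$. Combining this with Poincaré duality $m_{\mathrm{har},q}=m_{\mathrm{har},m-q}$ and $\alpha_{m-q}=-\alpha_{q-1}$, and again keeping track of the $(l_1,l_2)\leftrightarrow(l_2,l_1)$ swap induced by duality, compresses the harmonic sum into $w_2^{(m)}$. The special middle term $q=p$ in the even case, where $\alpha_p=\tfrac12$ and the harmonic contribution is halved by the ideal decomposition $\H^p(W)=V_a\oplus V_r$, separates out and gives $w_3^{(2p)}$ after using $\hat F_{1/2,1/2}$ which simplifies to an elementary trigonometric combination.

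The main technical obstacle will be the bookkeeping of the paired arguments $(l_1,l_2)$ versus $(l_2,l_1)$: the mixed BC (relative on $\partial_1$, absolute on $\partial_2$) means that after Hodge duality on $W$ the roles of the two boundary components interchange, and every sign that in the cone case came purely from flipping $\alpha_q\to -\alpha_q$ now must be accompanied by this argument swap. Carefully tracking these swaps through both the coexact and harmonic sums, and checking that the residual term at $q=[m/2]$ in the odd-dimensional case indeed reduces to $w_1^{(2p-1)}$ with $\hat D_{p-1,0}$ written in both orders, is the step where most effort goes. Once these are done, reading off the four pieces $w_0, w_1, w_2, w_3$ is immediate.
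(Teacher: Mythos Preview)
Your plan is essentially the same as the paper's: the authors state explicitly that the proof of Lemma \ref{fit} is analogous to that of Lemma \ref{l000}, and indeed one rearranges the spectrum of Lemma \ref{l3b}, folds the coexact sum via Hodge duality on $W$ (tracking the simultaneous flip $\alpha_q\mapsto -\alpha_q$ and swap $(l_1,l_2)\leftrightarrow(l_2,l_1)$ forced by the mixed BC), and simplifies the harmonic terms through the Bessel recursions to pass from $\hat F_{|\alpha_q|,\alpha_q}$ to $F_{-\alpha_{q-1}}$.

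There is one conceptual slip in your last paragraph. You attribute the isolated term $w_3^{(2p)}$ to the ideal boundary decomposition $\H^p(W)=V_a\oplus V_r$, but the frustum $C_{[l_1,l_2]}(W)$ with $l_1>0$ is a smooth manifold with boundary and carries \emph{no} ideal BC: compare Lemma \ref{l3b}, where the spectrum is given uniformly in $q$, with Lemma \ref{l3}, where the degrees $q=p,p+1$ are singled out precisely because of the ideal conditions at the tip. The middle term $w_3^{(2p)}$ arises solely because Poincar\'e duality on $W$ pairs $q$ with $m-q$, leaving the self-dual degree $q=p$ unpaired; no extra factor of $\tfrac12$ from ideal BC enters here (the $\tfrac12$ in $w_3^{(2p)}$ is just the overall $\tfrac12$ from the torsion zeta function). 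Once you drop that claim, your outline matches the paper's argument.
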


\begin{lem}\label{al2} The torsion zeta function on the frustum with absolute BC reads
\[
t^{(m)}_{\rm Frustum, abs}(s)=y^{(m)}_0(s)+y^{(m)}_1(s)+y^{(m)}_2(s)+y^{(m)}_3(s),
\]
where:
\begin{align*}
y_0^{(m)}(s)&= \frac{1}{2}\sum^{\left[\frac{m}{2}\right]-1}_{q=0} (-1)^q \left((E_q(s)-\hat E_{q,+}(s))
+(-1)^{m-1}(E_q(s)-\hat E_{q,-}(s))\right),\\
y_1^{(2p-1)}(s)&= (-1)^{p-1}\frac{1}{2}  \left(  E_{p-1}(s)-\hat E_{p-1,0}(s) \right),&y_1^{(2p)}(s)&=0,\\
y_2^{(m)}(s)&= \frac{1}{2} \sum_{q=0}^{\left[\frac{m-1}{2}\right]} (-1)^{q+1}  m_{{\rm har},q}\left(e_{q-1,-}(s)+(-1)^m e_{q,-}(s)\right),\\
y_3^{(2p)}(s)&= (-1)^{p+1} m_{{\rm har},p}\frac{1}{2}e_{p,-}(s),&y_3^{(2p-1)}(s)&=0,
\end{align*}
where $\hat E_{q,0}$ denotes $\hat E_{q,\pm}$ with $\alpha_q=0$, and
\begin{align*}
E_q(s)&=\sum^{\infty}_{n,k=1} m_{{\rm cex},q,n}\upsilon^{-2s}_{\mu_{q,n},k},&
\hat E_{q,\pm}(s)&=\sum^{\infty}_{n,k=1} m_{{\rm cex},q,n}\hat \upsilon^{-2s}_{\mu_{q,n},\pm\alpha_q,k},
&e_{q,-}(s)&=\sum_{k=1}^{\infty} \upsilon^{-2s}_{-\alpha_{q},k}.
\end{align*}
\end{lem}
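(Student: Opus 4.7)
The plan is to mimic the proof of Lemma \ref{l000} line by line, substituting the spectrum from Lemma \ref{al1} for that from Lemma \ref{l3}. Formally, the frustum-with-absolute-BC spectrum has exactly the same six families as the cone spectrum, with $j_{\mu,k}$ and $\hat j_{\mu,c,k}$ replaced by $\upsilon_{\mu,k}$ and $\hat \upsilon_{\mu,c,k}$, and crucially with no separate middle-dimension case (no ideal BC to worry about). So I expect the bookkeeping to be strictly easier than in Lemma \ref{l000}.

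First I would assemble $t^{(m)}_{\rm Frustum, abs}(s)=\frac{1}{2}\sum_{q=1}^{m+1}(-1)^q q\,\zeta(s,\Delta^{(q)}_{\rm abs})$ directly from Lemma \ref{al1}, and perform the standard re-indexing (shifting $q-1$ and $q-2$ back to $q$ and collecting) to group the coexact contributions into a single double sum of the shape
\[
\tfrac{1}{2}\sum_{q=0}^{m-1}(-1)^q\sum_{n,k} m_{{\rm cex},q,n}\bigl(\upsilon^{-2s}_{\mu_{q,n},k}-\hat\upsilon^{-2s}_{\mu_{q,n},\alpha_q,k}\bigr)
\]
plus the harmonic pieces. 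Then I would use Hodge duality on coexact $q$-forms on $W$ (the identities $m_{{\rm cex},q,n}=m_{{\rm cex},m-1-q,n}$, $\mu_{q,n}=\mu_{m-1-q,n}$, $\alpha_q=-\alpha_{m-1-q}$, exactly as in Lemma \ref{l000}) to fold the range $0\le q\le m-1$ into $0\le q\le [m/2]-1$, producing the pair $E_q-\hat E_{q,+}$ together with its sign-twisted partner $(-1)^{m-1}(E_q-\hat E_{q,-})$; this yields $y_0^{(m)}$. When $m=2p-1$ the folded sum leaves a leftover middle term at $q=p-1$, where $\alpha_{p-1}=0$ collapses the two pieces into $y_1^{(2p-1)}$, while for $m=2p$ no such leftover appears and $y_1^{(2p)}=0$.

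For the harmonic contribution I would apply the Bessel recurrences
\[
\alpha_q Z_{\alpha_q}(z)+zZ'_{\alpha_q}(z)=zZ_{\alpha_{q-1}}(z),\qquad \alpha_q Z_{-\alpha_q}(z)+zZ'_{-\alpha_q}(z)=-zZ_{-\alpha_{q-1}}(z),
\]
(with $Z$ either $J$ or $Y$) to each of the four Bessel-type factors in $\hat\Upsilon_{|\alpha_q|,\alpha_q}(x)$. Bilinearity and a common factor $l_1 l_2 x^2$ then identify the zeros of $\hat\Upsilon_{|\alpha_q|,\alpha_q}$ with those of $\Upsilon_{-\alpha_{q-1}}$ when $\alpha_q<0$ and with those of $\Upsilon_{\alpha_{q-1}}$ when $\alpha_q>0$. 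Splitting the harmonic sum at the middle dimension according to the sign of $\alpha_q$, and then applying Poincar\'e duality $m_{{\rm har},q}=m_{{\rm har},m-q}$ together with $\alpha_{m-q}=-\alpha_{q-1}$ to fold the positive-$\alpha$ half onto the negative one, produces $y_2^{(m)}$; the case $m=2p$, $q=p$ (where $\alpha_p=\tfrac12>0$) is not absorbed by this folding and survives as $y_3^{(2p)}$, with a single $e_{p,-}$ rather than the two terms $z_{p,\pm}$ seen in $t_3^{(2p)}$ (this is precisely where the absence of ideal BC simplifies the output).

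The only step requiring genuine care, rather than pure transcription, is the Bessel simplification of $\hat\Upsilon_{|\alpha_q|,\alpha_q}$: here each factor $cZ_\mu(l_jx)+l_j xZ'_\mu(l_j x)$ is the recurrence applied in the variable $l_j x$ rather than $x$, so one must track the $l_j$ scaling to verify that the common prefactor pulls out cleanly across all four products and hence does not relocate any zeros. Once that identification is in place, the rearrangement of signs for even versus odd $m$ and the final folding via Poincar\'e duality are direct analogues of the manipulations in the proof of Lemma \ref{l000}, and the resulting decomposition is exactly the one stated.
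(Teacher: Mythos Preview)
Your proposal is correct and is exactly the approach the paper intends: the paper gives no separate proof of this lemma, stating only that ``the proof for the frustum is analogous'' to that of Lemma \ref{l000}, and your outline carries out precisely that analogy. Your identification of the one nontrivial point --- the bilinear Bessel simplification of $\hat\Upsilon_{|\alpha_q|,\alpha_q}$ with the $l_j$ scalings tracked --- and your explanation of why $y_3^{(2p)}$ carries a single $e_{p,-}$ (no ideal BC, so the $q=p$ harmonic term enters the standard sum and is the Poincar\'e-duality fixed point) are both accurate.
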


\section{Zeta determinants}
\label{s3}

We recall in this section the main points of the technique that we will use to compute the derivative at $s=0$ of the zeta functions appearing in the torsion zeta functions introduced  in Section \ref{s2}. This section is essentially contained in Section 4 of \cite{HS1}, to which we refer  for details, and based on \cite{Spr9}. Given a sequence $S=\{a_n\}_{n=1}^\infty$ of spectral type, we define the {\it zeta function}  by 
\[
\zeta(s,S)=\sum_{n=1}^\infty a_n^{-s},
\]
when $\Re(s)> \es(S)$, and by analytic continuation otherwise, and for all $\lambda\in\rho(S)=\C-S$, we define the {\it Gamma function}   by the canonical product,
\beq\label{gamma}
\frac{1}{\Gamma(-\lambda,S)}=\prod_{n=1}^\infty\left(1+\frac{-\lambda}{a_n}\right)\e^{\sum_{j=1}^{\gs(S)}\frac{(-1)^j}{j}\frac{(-\lambda)^j}{a_n^j}}.
\eeq

Given a double sequence $S=\{\lambda_{n,k}\}_{n,k=1}^\infty$  of non
vanishing complex numbers with unique accumulation point at the
infinity, finite exponent $s_0=\es(S)$ and genus $p=\gs(S)$, we use the notation $S_n$ ($S_k$) to denote the simple sequence with fixed $n$ ($k$), we call the exponents of $S_n$ and $S_k$ the {\it relative exponents} of $S$, and we use the notation $(s_0=\es(S),s_1=\es(S_k),s_2=\es(S_n))$; we define {\it relative genus} accordingly.

\begin{defi} Let $S=\{\lambda_{n,k}\}_{n,k=1}^\infty$ be a double
sequence with finite exponents $(s_0,s_1,s_2)$, genus
$(p_0,p_1,p_2)$, and positive spectral sector
$\Sigma_{\theta_0,c_0}$. Let $U=\{u_n\}_{n=1}^\infty$ be a totally
regular sequence of spectral type of infinite order with exponent
$r_0$, genus $q$, domain $D_{\phi,d}$. We say that $S$ is
spectrally decomposable over $U$ with power $\kappa$, length $\ell$ and
asymptotic domain $D_{\theta,c}$, with $c={\rm min}(c_0,d,c')$,
$\theta={\rm max}(\theta_0,\phi,\theta')$, if there exist positive
real numbers $\kappa$, $\ell$ (integer), $c'$, and $\theta'$, with
$0< \theta'<\pi$,   such that:
\begin{enumerate}
\item the sequence
$u_n^{-\kappa}S_n=\left\{\frac{\lambda_{n,k}}{u^\kappa_n}\right\}_{k=1}^\infty$ has
spectral sector $\Sigma_{\theta',c'}$, and is a totally regular
sequence of spectral type of infinite order for each $n$;
\item the logarithmic $\Gamma$-function associated to  $S_n/u_n^\kappa$ has an asymptotic expansion  for large
$n$ uniformly in $\lambda$ for $\lambda$ in
$D_{\theta,c}$, of the following form
\beq\label{exp}
\log\Gamma(-\lambda,u_n^{-\kappa} S_n)=\sum_{h=0}^{\ell}
\phi_{\sigma_h}(\lambda) u_n^{-\sigma_h}+\sum_{l=0}^{L}
P_{\rho_l}(\lambda) u_n^{-\rho_l}\log u_n+o(u_n^{-r_0}),
\eeq
where $\sigma_h$ and $\rho_l$ are real numbers with $\sigma_0<\dots <\sigma_\ell$, $\rho_0<\dots <\rho_L$, the
$P_{\rho_l}(\lambda)$ are polynomials in $\lambda$ satisfying the condition $P_{\rho_l}(0)=0$, $\ell$ and $L$ are the larger integers 
such that $\sigma_\ell\leq r_0$ and $\rho_L\leq r_0$.
\end{enumerate}
\label{spdec}
\end{defi}

Define the following functions, ($\Lambda_{\theta,c}=\left\{z\in \C~|~|\arg(z-c)|= \frac{\theta}{2}\right\}$, oriented counter clockwise):
\beq\label{fi1}
\Phi_{\sigma_h}(s)=\int_0^\infty t^{s-1}\frac{1}{2\pi i}\int_{\Lambda_{\theta,c}}\frac{\e^{-\lambda t}}{-\lambda} \phi_{\sigma_h}(\lambda) d\lambda dt.
\eeq

By Lemma 3.3 of \cite{Spr9}, for all $n$, we have the expansions:
\beq\label{form}\begin{aligned}
\log\Gamma(-\lambda,S_n/{u_n^\kappa})&\sim\sum_{j=0}^\infty a_{\alpha_j,0,n}
(-\lambda)^{\alpha_j}+\sum_{k=0}^{p_2} a_{k,1,n}(-\lambda)^k\log(-\lambda),\\
\phi_{\sigma_h}(\lambda)&\sim\sum_{j=0}^\infty b_{\sigma_h,\alpha_j,0}
(-\lambda)^{\alpha_j}+\sum_{k=0}^{p_2} b_{\sigma_h,k,1}(-\lambda)^k\log(-\lambda),
\end{aligned}
\eeq
for large $\lambda$ in $D_{\theta,c}$. We set (see Lemma 3.5 of \cite{Spr9})
\beq\label{fi2}
\begin{aligned}
A_{0,0}(s)&=\sum_{n=1}^\infty \left(a_{0, 0,n} -{\sum_{h=0}^\ell}{^{\displaystyle
'}}
b_{\sigma_h,0,0}u_n^{-\sigma_h}\right)u_n^{-\kappa s},\\
A_{j,1}(s)&=\sum_{n=1}^\infty \left(a_{j, 1,n} -{\sum_{h=0}^\ell}{^{\displaystyle
'}}
b_{\sigma_h,j,1}u_n^{-\sigma_h}\right)u_n^{-\kappa s},
~~~0\leq j\leq p_2,
\end{aligned}
\eeq
where the notation $\sum'$ means that only the terms such that $\zeta(s,U)$ has a pole at $s=\sigma_h$ appear in the sum.

\begin{theo} \label{t4} Let $S$ be spectrally decomposable over $U$ as in Definition \ref{spdec}. Assume that the functions $\Phi_{\sigma_h}(s)$ have at most simple poles for $s=0$. Then,
$\zeta(s,S)$ is regular at $s=0$, and
\begin{align*}
\zeta(0,S)=&-A_{0,1}(0)+\frac{1}{\kappa}{\sum_{h=0}^\ell} \Ru_{s=0}\Phi_{\sigma_h}(s)\Ru_{s=\sigma_h}\zeta(s,U),\\
\zeta'(0,S)=&-A_{0,0}(0)-A_{0,1}'(0)+\frac{\gamma}{\kappa}\sum_{h=0}^\ell\Ru_{s=0}\Phi_{\sigma_h}(s)\Ru_{s=\sigma_h}\zeta(s,U)\\
&+\frac{1}{\kappa}\sum_{h=0}^\ell\Rz_{s=0}\Phi_{\sigma_h}(s)\Ru_{s=\sigma_h}\zeta(s,U)+{\sum_{h=0}^\ell}{^{\displaystyle
'}}\Ru_{s=0}\Phi_{\sigma_h}(s)\Rz_{s=\sigma_h}\zeta(s,U),
\end{align*}
where the notation $\sum'$ means that only the terms such that $\zeta(s,U)$ has a pole at $s=\sigma_h$ appear in the sum.

\end{theo}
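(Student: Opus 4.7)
The plan is to combine the scaling identity $\zeta(s,S)=\sum_{n=1}^\infty u_n^{-\kappa s}\zeta(s,u_n^{-\kappa}S_n)$ with the standard fact that for any totally regular sequence $T$ of spectral type, the values $\zeta(0,T)$ and $\zeta'(0,T)$ can be read off from the large-$\lambda$ asymptotic expansion of $\log\Gamma(-\lambda,T)$: specifically $\zeta(0,T)=-a_{0,1}$ and $\zeta'(0,T)=-a_{0,0}$, where $a_{0,0}$ and $a_{0,1}$ are the constant and $\log(-\lambda)$ coefficients in the expansion displayed in the statement following Definition 4.1. Applying this to $T=u_n^{-\kappa}S_n$ and expanding $u_n^{-\kappa s}=1-\kappa s\log u_n+O(s^2)$ produces $\zeta(0,S_n)=-a_{0,1,n}$ and $\zeta'(0,S_n)=-a_{0,0,n}+\kappa\,a_{0,1,n}\log u_n$, so that formally $\zeta(0,S)=-\sum_n a_{0,1,n}$ and $\zeta'(0,S)=-\sum_n a_{0,0,n}+\kappa\sum_n a_{0,1,n}\log u_n$.

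These formal sums diverge, and the regularization is forced by the uniform-in-$\lambda$ expansion of Definition 4.1. By the expansion of $\phi_{\sigma_h}(\lambda)$, the coefficients $a_{j,\iota,n}$ themselves admit asymptotic expansions with leading pieces $b_{\sigma_h,j,\iota}u_n^{-\sigma_h}$, and only those indices $h$ for which $\zeta(s,U)$ has a genuine pole at $s=\sigma_h$ obstruct the convergence of the Dirichlet series $\sum_n a_{0,\iota,n}u_n^{-\kappa s}$ near $s=0$. This is exactly why the primed sums in the definitions of $A_{0,0}(s)$ and $A_{0,1}(s)$ subtract precisely those terms: the remaining series then converges in a half-plane containing $s=0$, with value $A_{0,\iota}(0)$. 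The pieces $b_{\sigma_h,0,\iota}\sum_n u_n^{-\sigma_h-\kappa s}=b_{\sigma_h,0,\iota}\,\zeta(\kappa s+\sigma_h,U)$ that were added back are meromorphic and analyzed through their Laurent expansions around the pole at $s=0$ coming from the pole of $\zeta(\cdot,U)$ at $\sigma_h$. On the $\Phi_{\sigma_h}$ side, the Mellin-type formula combined with the large-$\lambda$ expansion of $\phi_{\sigma_h}$ identifies the residues $\Ru_{s=0}\Phi_{\sigma_h}(s)=b_{\sigma_h,0,1}$ and the finite part $\Rz_{s=0}\Phi_{\sigma_h}(s)=b_{\sigma_h,0,0}$ under the hypothesis that $\Phi_{\sigma_h}$ has at most a simple pole at $s=0$.

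Putting these ingredients together, the potential poles at $s=0$ produced by the two Laurent expansions cancel in precisely the combinations $\Ru_{s=0}\Phi_{\sigma_h}(s)\cdot\Ru_{s=\sigma_h}\zeta(s,U)$, which establishes regularity and yields the stated formula for $\zeta(0,S)$. The finite parts then assemble into $\zeta'(0,S)$, with the Euler constant $\gamma$ appearing through the Laurent expansion of $\Gamma(s)$ near $s=0$ in the Mellin representation used to extract $\zeta'$. The logarithmic pieces $P_{\rho_l}(\lambda)u_n^{-\rho_l}\log u_n$ contribute nothing to $\zeta(0,S)$ or $\zeta'(0,S)$, since the normalization $P_{\rho_l}(0)=0$ kills their effect on the coefficients $a_{0,\iota,n}$.

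The main technical obstacle is justifying the interchange of the sum over $n$ with the Mellin/contour integrals and the asymptotic manipulations in $\lambda$: the uniformity of the expansion in Definition 4.1 on the domain $D_{\theta,c}$ together with the remainder control $o(u_n^{-r_0})$ make this rigorous, but keeping the remainder genuinely smaller than the leading $\ell$ terms simultaneously in the Mellin variable $s$ and the spectral variable $\lambda$ requires careful uniform estimates. Once this interchange is justified, the remainder of the argument is pure residue bookkeeping between $\Phi_{\sigma_h}(s)$ at $s=0$ and $\zeta(s,U)$ at $s=\sigma_h$, which under the simple-pole hypothesis on $\Phi_{\sigma_h}$ assembles cleanly into the two displayed formulas.
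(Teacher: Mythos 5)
The paper does not prove this theorem: as announced at the start of Section~\ref{s3}, the material there (including Theorem~\ref{t4}) is recalled from \cite{HS1} and \cite{Spr9}, so there is no internal proof to compare against. Your sketch captures the right global shape---the scaling $\zeta(s,S)=\sum_n u_n^{-\kappa s}\zeta(s,u_n^{-\kappa}S_n)$, reading $\zeta(0,\cdot)$ and $\zeta'(0,\cdot)$ off the log-Gamma coefficients, subtracting the divergent $n$-asymptotics in the definition of the $A$'s, and residue bookkeeping---but it breaks at the step where you assert $\Ru_{s=0}\Phi_{\sigma_h}(s)=b_{\sigma_h,0,1}$ and $\Rz_{s=0}\Phi_{\sigma_h}(s)=b_{\sigma_h,0,0}$. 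Those identities are false. The numbers $b_{\sigma_h,0,0}$ and $b_{\sigma_h,0,1}$ are the constant and $\log(-\lambda)$ coefficients in the \emph{large-$\lambda$} expansion of $\phi_{\sigma_h}$, whereas the Laurent data of $\Phi_{\sigma_h}(s)$ at $s=0$ depends on $\phi_{\sigma_h}$ more globally: because of the factor $\frac{1}{-\lambda}$ in the contour integral~(\ref{fi1}), the inner integral tends to a constant controlled by $\phi_{\sigma_h}(0)$ as $t\to 0^+$, and it is the Mellin transform of that constant that produces the simple pole of $\Phi_{\sigma_h}(s)$ at $s=0$. The discrepancy is already visible in the paper's own proof of Lemma~\ref{tec2b}: for $\phi(\lambda)=(1-\lambda)^{-k/2}$ with $k>0$ one has $b_{0,0}=b_{0,1}=0$, yet the quoted formula gives $\Phi(s)=\frac{\Gamma(s+k)}{\Gamma(k)\,s}$, whose residue at $s=0$ is $1=\phi(0)$, not $0=b_{0,1}$; your identification would predict no pole at all.

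Once the identification fails, the mechanism by which the subtracted $b_{\sigma_h,0,\iota}u_n^{-\sigma_h}$ pieces are supposed to reappear as the $\Phi$-contributions cannot close, and the regularized formal sums $-\sum_n a_{0,1,n}$ and $-\sum_n a_{0,0,n}+\kappa\sum_n a_{0,1,n}\log u_n$ do not reassemble into the stated formula---in particular the terms involving $\Rz_{s=0}\Phi_{\sigma_h}$ and the Euler constant $\gamma$ cannot be recovered from the data you have retained. The argument in \cite{Spr9} instead substitutes the uniform expansion~(\ref{exp}) of $\log\Gamma(-\lambda,u_n^{-\kappa}S_n)$ into the Mellin/contour representation of the \emph{whole} function $\zeta(s,S)$ (not merely its value and first derivative at $s=0$ fibrewise), so that the $\Phi_{\sigma_h}(s)$ appear as genuine meromorphic factors multiplying $\zeta(\kappa s+\sigma_h,U)$; regularity at $s=0$ and the displayed formulas then follow from the Laurent expansions of those products, using the actual residues and finite parts of $\Phi_{\sigma_h}$, quantities that are not expressible through $b_{\sigma_h,0,0}$ and $b_{\sigma_h,0,1}$ alone.
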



\begin{rem}\label{rqr} We call regular part of $\zeta(0,S)$ the first term appearing in the formula given in the theorem, and regular part of $\zeta'(0,S)$ the first two terms. The other terms  we call singular part.
\end{rem}

\begin{corol} \label{c} Let $S_{(j)}=\{\lambda_{(j),n,k}\}_{n,k=1}^\infty$, $j=1,...,J$, be a finite set of  double sequences that satisfy all the requirements of Definition \ref{spdec} of spectral decomposability over a common sequence $U$, with the same parameters $\kappa$, $\ell$, etc., except that the polynomials $P_{(j),\rho}(\lambda)$ appearing in condition (2) do not vanish for $\lambda=0$. Assume that some linear combination $\sum_{j=1}^J c_j P_{(j),\rho}(\lambda)$, with complex coefficients, of such polynomials does satisfy this condition, namely that $\sum_{j=1}^J c_j P_{(j),\rho}(\lambda)=0$. Then, the linear combination of the zeta function $\sum_{j=1}^J c_j \zeta(s,S_{(j)})$ is regular at $s=0$ and satisfies the linear combination of the formulas given in Theorem \ref{t4}.
\end{corol}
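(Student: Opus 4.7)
The approach is to revisit the derivation of Theorem \ref{t4} and to observe that it is obtained by applying a sequence of $\C$-linear operations to the asymptotic expansion (\ref{exp}) of $\log\Gamma(-\lambda, u_n^{-\kappa} S_n)$. The only place where the hypothesis $P_{\rho_l}(0)=0$ is used essentially is in guaranteeing that the polynomial remainder $\sum_l P_{\rho_l}(\lambda) u_n^{-\rho_l}\log u_n$, once Mellin-transformed and summed over $n$, does not contribute an obstruction to the regularity of $\zeta(s,S)$ at $s=0$. The plan is therefore to run the same derivation after first forming the linear combination $\sum_j c_j$, exploiting the fact that the offending polynomial terms cancel only at that level.

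Concretely, for each $j$ write the expansion
\[
\log \Gamma(-\lambda, u_n^{-\kappa} S_{(j),n}) = \sum_{h=0}^{\ell} \phi_{(j),\sigma_h}(\lambda) u_n^{-\sigma_h} + \sum_{l=0}^{L} P_{(j),\rho_l}(\lambda) u_n^{-\rho_l} \log u_n + o(u_n^{-r_0}),
\]
multiply by $c_j$, and sum over $j$. Since $\sum_j c_j P_{(j),\rho_l}(\lambda)\equiv 0$ for every $l$ by assumption, the combined expansion has no surviving logarithmic-polynomial term, so it formally satisfies condition (2) of Definition \ref{spdec}. Feeding this combined expansion into the derivation of Theorem \ref{t4} — the Mellin transform, the $\lambda$-contour manipulation producing the functions $\Phi_{\sigma_h}(s)$ of (\ref{fi1}), the subtraction in (\ref{fi2}), and the residue calculus at $s=0$ and $s=\sigma_h$ — and using that every step is $\C$-linear in the asymptotic data, one concludes that $\sum_j c_j \zeta(s, S_{(j)})$ is regular at $s=0$ and that its value and derivative there coincide with the $c_j$-weighted sum of the right-hand sides of Theorem \ref{t4}.

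The chief subtlety is that each individual expression on the right-hand side of Theorem \ref{t4} need not be well defined for $S_{(j)}$ alone: the functions $\Phi_{(j),\sigma_h}(s)$ can acquire extra singularities at $s=0$, and the sums defining $A_{(j),0,0}(s)$ and $A_{(j),0,1}(s)$ may fail to converge at $s=0$, precisely because of the non-vanishing $P_{(j),\rho_l}(0)$. What the hypothesis delivers is that these failures contribute to the formula through expressions that depend linearly on the polynomials $P_{(j),\rho_l}$, so that the linear combination $\sum_j c_j$ annihilates them and the right-hand side as a whole becomes meaningful. The principal obstacle, and the bulk of the work, is the bookkeeping: one must verify, step by step through the proof of Theorem \ref{t4} as carried out in \cite{Spr9}, that every singular contribution stemming from $P_{(j),\rho_l}(0)\neq 0$ enters the final formula through a functional that is linear in $P_{(j),\rho_l}(\lambda)$, so that the vanishing of the combination $\sum_j c_j P_{(j),\rho_l}(\lambda)$ is enough both to make the combined formula well defined and to match it with $\sum_j c_j \zeta(s,S_{(j)})$ and its derivative at $s=0$.
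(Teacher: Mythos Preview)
Your proposal is correct and matches the paper's intended argument: the corollary is stated in the paper without proof precisely because it follows from the linearity of every step in the derivation of Theorem \ref{t4} (carried out in \cite{Spr9}), together with the observation that forming the combination $\sum_j c_j$ first restores the condition $\sum_j c_j P_{(j),\rho_l}(0)=0$ needed for regularity at $s=0$. Your discussion of the subtlety---that individual right-hand sides may be ill-defined while the combination is not---is exactly the point, and is the reason the statement is phrased as ``satisfies the linear combination of the formulas'' rather than as a sum of separately valid identities.
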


We conclude recalling some formulas for the zeta determinants of some simple sequences. The results are known to specialists, and can be found in different places. We will use the formulation of \cite{Spr1}. For positive real  $l$ and $q$, define the {\it non homogeneous quadratic Bessel zeta function} by
\[
z(s,\nu,q,l)=\sum_{k=1}^\infty \left(\frac{j_{\nu,k}^2}{l^2}+q^2\right)^{-s},
\]
for $\Re(s)>\frac{1}{2}$. Then, $z(s,\nu,q,l)$ extends analytically to a meromorphic function in the complex plane with simple poles at $s=\frac{1}{2}, -\frac{1}{2}, -\frac{3}{2}, \dots$. The point $s=0$ is a regular point and
\beq\label{p000}
\begin{aligned}
z(0,\nu,q,l)&=-\frac{1}{2}\left(\nu+\frac{1}{2}\right),&
z'(0,\nu,q,l)&=-\log\sqrt{2\pi l}\frac{I_\nu(lq)}{q^\nu}.
\end{aligned}
\eeq

In particular, taking the limit for $q\to 0$,
\beq\label{p00}
z'(0,\nu,0,l)=-\log\frac{\sqrt{\pi}l^{\nu+\frac{1}{2}}}{2^{\nu-\frac{1}{2}}\Gamma(\nu+1)}.
\eeq

\section{Decomposition of the torsion zeta function}
\label{s4}

Inspection of the formulas in the lemmas of Section \ref{s2} shows that the torsion zeta function is the finite sum of some simple and some double series:
\begin{align*}
t_{\rm Cone, abs}(s)&=t_{0}(s)+t_1(s) +t_2(s)+t_3(s),\\
t_{\rm Frustum, mixed}(s)&=w_0(s)+w_1(s)+w_2(s)+w_3(s),\\
t_{\rm Frustum, abs}(s)=&y_0(s)+y_1(s)+y_2(s)+y_3(s),
\end{align*}
where $t_0,t_1$,  $w_0,w_1$, and $y_0,y_1$ are double series, and the others simple series. Simple series can be treated by using the formulas appearing at the end of Section \ref{s3}. In order to deal with the double series,  applying the spectral decomposition Theorem  \ref{t4}, the derivative at zero of $t_0,t_1$,  $w_0,w_1$, and $y_0,y_1$ decomposes into two parts, called regular and singular contribution (see Remark \ref{rqr}), and this gives the following  decomposition of the analytic torsion (in the following we assume $l_1>0$), 
\begin{align*}
\log T_{\rm abs, ideal}(C_l (W))&=t_{0, {\rm reg}}'(0)+t_{1, {\rm reg}}'(0)+t_{0, {\rm sing}}'(0)+t_{1, {\rm sing}}'(0)+t_2'(0)+t_3'(0),\\
\log T_{\rm mixed}(C_{[l_1,l_2]} (W))&=w_{0, {\rm reg}}'(0)+w_{1, {\rm reg}}'(0)+w_{0, {\rm sing}}'(0)+w_{1, {\rm sing}}'(0)+w_2'(0)+w_3'(0),\\
\log T_{\rm abs}(C_{[l_1,l_2]} (W))&=y_{0,\rm reg}(s)+y_{1,\rm reg}(s)+y_{0,\rm sing}(s)+y_{1,\rm sing}(s)+y_2(s)+y_3(s).
\end{align*}

On the other side, for the frustum, that is a manifold, we also have the decomposition
\begin{align*}
\log T_{\rm mixed}(C_{[l_1,l_2]} (W))=&\log \tau(C_{[l_1,l_2]} (W), \{l_1\}\times W)\\
&+\frac{1}{4}\chi(\b C_{[l_1,l_2]} (W))\log 2+A_{\rm BM, mixed}(\b  C_{[l_1,l_2]} (W)),\\
\log T_{\rm abs}(C_{[l_1,l_2]} (W))=&\log \tau(C_{[l_1,l_2]} (W))\\
&+\frac{1}{4}\chi(\b C_{[l_1,l_2]} (W))\log 2+A_{\rm BM, abs}(\b  C_{[l_1,l_2]} (W)). 
\end{align*}

We will prove in Section \ref{regularpart} that
\beq\label{www}
\begin{aligned}
\log \tau(C_{[l_1,l_2]} (W), \{l_1\}\times W)+\frac{1}{2}\chi(W)\log 2&=w_{0, {\rm reg}}'(0)+w_{1, {\rm reg}}'(0)+w_2'(0)+w_3'(0),\\
A_{\rm BM, mixed}(\b  C_{[l_1,l_2]} (W))&=w_{0, {\rm sing}}'(0)+w_{1, {\rm sing}}'(0),
\end{aligned}
\eeq
\beq\label{www1}
\begin{aligned}
\log \tau(C_{[l_1,l_2]} (W))+\frac{1}{2}\chi(W)\log 2&=y_{0, {\rm reg}}'(0)+y_{1, {\rm reg}}'(0)+y_2'(0)+y_3'(0),\\
A_{\rm BM, abs}(\b  C_{[l_1,l_2]} (W))&=y_{0, {\rm sing}}'(0)+y_{1, {\rm sing}}'(0).
\end{aligned}
\eeq

This suggests to introduce a similar decomposition for the cone
\begin{align*}
\log T_{\rm abs, ideal}(C_l (W))&=\log T_{\rm global}(C_l (W))+\frac{1}{4}\chi(\b C_l (W))\log 2+A_{\rm BM, abs}(\b  C_l (W)),
\end{align*}
with
\beq \label{ww}
\begin{aligned}
\log T_{\rm global}(C_l (W))+\frac{1}{4}\chi(\b C_l (W))\log 2&=t_{0, {\rm reg}}'(0)+t_{1, {\rm reg}}'(0)+t_2'(0)+t_3'(0),\\
A_{\rm BM, abs}(\b  C_l (W))&=t_{0, {\rm sing}}'(0)+t_{1, {\rm sing}}'(0).
\end{aligned}
\eeq

We will call the first term, namely global  plus Euler, the {\it regular part} of the torsion, and the second term the {\it anomaly boundary term}. We will prove in Section \ref{nove} that the notation is justified by the fact that $t_{0, {\rm sing}}'(0)+t_{1, {\rm sing}}'(0)$ coincides with the term that we would obtain if we would apply the formula of Br\"{u}ning and Ma for the cone as if it were a regular manifold.

\section{Calculations I: application of Definition \ref{spdec}}
\label{calc1}

In both the cases of the cone and of the frustum, we prove that the double series are spectrally decomposable according to Definition \ref{spdec} (see detail in \cite{HS2}) on the same simple sequence, that we discuss now. The simple sequence is $U_q=\{m_{q,n}:\mu_{q,n}\}_{n=1}^\infty$. This is  a totally regular sequence of spectral type with infinite order, $\es(U_{q})=\gs(U_{q})=m=\dim W$, and the associated zeta functions is
\[
\zeta(s,U_{q})=\zeta_{\rm cex}\left(\frac{s}{2},\tilde\Delta^{(q)}+\alpha^2_q\right).
\]

The  possible poles of $\zeta(s,U_{q})$ are at $s=m-h$, $h=0,2,4,\dots$,  and the residues are completely determined by the residues of the function $\zeta_{\rm cex}(s,\tilde\Delta^{(q)})$ \cite[Lemma 5.2]{HS2}.

We give in this section complete calculations for the cone and for the frustum with mixed BC, we omit the calculations for the frustum with absolute BC that are very similar to ones for the frustum with mixed BC.

\subsection{The cone} We consider the double series
\[
\begin{aligned}
Z_q(s)&=\sum^{\infty}_{n,k=1} m_{{\rm cex},q,n}j^{-2s}_{\mu_{q,n},k},&
\hat Z_{q,\pm}(s)&=\sum^{\infty}_{n,k=1} m_{{\rm cex},q,n}\hat j^{-2s}_{\mu_{q,n},\pm\alpha_q,k},
\end{aligned}
\]
appearing in Lemma \ref{l000}. These are the zeta functions associated to the double sequences $S_q=\{m_{{\rm cex},q,n}:j^{2}_{\mu_{q,n},k}\}$ and $\hat S_{q,\pm}=\{m_{{\rm cex},q,n}:\hat j^{2}_{\mu_{q,n},\pm\alpha_q,k}\}$. It is easy to see that these sequences have power $\kappa=2$.  
By Definition \ref{spdec}, the relevant spectral function associated to $S_q$ is the function (compare with equation (\ref{gamma})):
\begin{align*}
\log \Gamma(-\lambda,S_{q})=&-\log\prod_{k=1}^\infty
\left(1+\frac{(-\lambda)}{j_{\mu_{q,n},k}^2}\right)\\
=&-\log I_{\mu_{q,n}}(\sqrt{-\lambda})+\mu_{q,n}\log\sqrt{-\lambda} -\mu_{q,n}\log 2-\log\Gamma(\mu_{q,n}+1),
\end{align*}
and the relevant spectral function associated to $\hat S_{q,\pm}$ is the function:
\begin{align*}
\log \Gamma(-\lambda,\hat S_{q,\pm})=&-\log\prod_{k=1}^\infty \left(1+\frac{(-\lambda)}{\hat j_{\mu_{q,n},\pm\alpha_q,k}^2}\right)\\
=&-\log \hat I_{\mu_{q,n},\pm\alpha_q}(\sqrt{-\lambda})+\mu_{q,n}\log\sqrt{-\lambda}-\log2^{\mu_{q,n}}\Gamma(\mu_{q,n})+\log\left(1\pm\frac{\alpha_q}{\mu_{q,n}}\right),
\end{align*}
where (for $-\pi < \arg(z)\leq \frac{\pi}{2}$)
\[
\hat I_{\nu,c}(z)=\e^{-\frac{\pi}{2}i\nu} \hat J_{\nu,c}(i z).
\]

First, we need uniform asymptotic expansions of the function $\log \Gamma$ for large $n$. Such expansions can be obtained from those of the Bessel function (see \cite[Section 5.1]{HS2} for details). For large $n$, uniformly in $\lambda$, (compare with equation (\ref{exp}))
\beq\label{p1}
\begin{aligned}
\log \Gamma(-\lambda,S_{q}/\mu_{q,n}^2)=&\mu_{q,n}\log\mu_{q,n}\sqrt{-\lambda} -\log 2^{\mu_{q,n}}\Gamma(\mu_{q,n}+1)-\mu_{q,n}\sqrt{1-\lambda}\\
&-\mu_{q,n}\log\sqrt{-\lambda} +\mu_{q,n}\log(1+\sqrt{1-\lambda})+ \frac{1}{2}\log2\pi\mu_{q,n} + \frac{1}{4}\log(1-\lambda) \\
&+ \sum_{j=1}^{m}\frac{\phi_{q,j}(\lambda)}{\mu_{q,n}^j} + O(\mu_{q,n}^{-2p}),
\end{aligned}
\eeq
where $\phi_{q,j}(\lambda)$ is the coefficient of $\mu_{q,n}^{-j}$ in the asymptotic expansion of (for the definition of the function $U$ and $W$ see Lemma \ref{tec2} below)
\[
\log\left(1+\sum_{k=1}^\infty U_{k}(\lambda) \mu_{q,n}^{-k}\right),
\]
for large $\mu_{q,n}$, i.e.
\beq\label{ff11}
\begin{aligned}
\log\left(1+\sum_{k=1}^\infty U_{k}(\lambda) \mu_{q,n}^{-k}\right)=\sum_{j=1}^\infty \phi_{q,j}(\lambda) \mu_{q,n}^{-j}.
\end{aligned}
\eeq
and
\beq\label{p2}
\begin{aligned}
\log \Gamma(-\lambda,\hat S_{q,\pm}/\mu_{q,n}^2)=&\mu_{q,n}\log\mu_{q,n}\sqrt{-\lambda}-\log2^{\mu_{q,n}}\Gamma(\mu_{q,n}+1)+\log\left(1\pm\frac{\alpha_q}{\mu_{q,n}}\right)\\
&-\mu_{q,n}\sqrt{1-\lambda}-\mu_{q,n}\log\sqrt{-\lambda}+\mu_{q,n}\log(1+\sqrt{1-\lambda}) \\
&+ \frac{1}{2}\log2\pi\mu_{q,n} -\frac{1}{4}\log(1-\lambda)+  \sum_{j=0}^{m}\frac{\hat\psi_{q,j,\pm}(\lambda)}{\mu_{q,n}^j} + O(\mu_{q,n}^{-2p}),
\end{aligned}
\eeq
where $\hat\psi_{q,j,\pm}(\lambda)$ is the coefficient of $\mu_{q,n}^{-j}$ in the asymptotic expansion of 
\[
\log\left(1+\sum_{k=1}^\infty W_{\pm\alpha_q,k}(\lambda) \mu_{q,n}^{-k}\right),
\]
for large $\mu_{q,n}$, i.e.
\[
\log\left(1+\sum_{k=1}^\infty W_{\pm\alpha_q,k}(\lambda) \mu_{q,n}^{-k}\right)=\sum_{j=1}^\infty \hat\psi_{q,j,\pm}(\lambda) \mu_{q,n}^{-j},
\]
therefore the complete coefficient of $\mu_{q,n}^{-j}$ in $\log \Gamma(-\lambda,\hat S_{q,\pm}/\mu_{q,n}^2)$ is
\beq\label{ff22}
\hat\phi_{q,j,\pm}(\lambda)=\hat\psi_{q,j,\pm}(\lambda)+\frac{(-1)^{j+1}}{j} (\pm\alpha_q)^j.
\eeq


Second, we need  asymptotic expansions of the function $\log \Gamma$ and of the functions $\phi_{q,j}$ and $\hat \phi_{q,j,\pm}$ for large $\lambda$. The expansion of $\log \Gamma$ can be obtained from those of the Bessel function  \cite[pg.s 641, 642]{HS2} (compare with equation (\ref{form}))
\beq
\label{loo1}
\begin{aligned}
\log\Gamma(-\lambda, S_q/\mu_{q,n}^2) =& \frac{1}{2}\log
2\pi+\left(\mu_{q,n}+\frac{1}{2}\right)\log\mu_{q,n}-\mu_{q,n}\log2\\
&-\log\Gamma(\mu_{q,n}+1)+\frac{1}{2}\left(\mu_{q,n}+\frac{1}{2}\right)\log(-\lambda) +
O(\e^{-\mu_{q,n}\sqrt{-\lambda}}),
\end{aligned}
\eeq
\beq
\label{loo2}
\begin{aligned}
\log\Gamma(-\lambda, \hat S_{q,\pm}/\mu_{q,n}^2) =& \mu_{q,n}\sqrt{-\lambda}+\frac{1}{2}\log
2\pi+\left(\mu_{q,n}-\frac{1}{2}\right)\log\mu_{q,n}-\log2^{\mu_{q,n}}\Gamma(\mu_{q,n})\\
&+\frac{1}{2}\left(\mu_{q,n}-\frac{1}{2}\right)\log(-\lambda) +
\log\left(1\pm\frac{\alpha_q}{\mu_{q,n}}\right) +O(\e^{-\mu_{q,n}\sqrt{-\lambda}}).
\end{aligned}
\eeq

About the functions $\phi_{q,j}(\lambda)$ and $\hat \phi_{q,j,\pm}(\lambda)$ we have the following facts.

\begin{lem}\label{tec2} For $j=0$, 
\begin{align*}
\phi_{q,0}(\lambda)&=\frac{1}{2}\log 2\pi +\frac{1}{4}\log (1-\lambda),\\
\hat\phi_{q,0,\pm}(\lambda)&=\frac{1}{2}\log 2\pi -\frac{1}{4}\log (1-\lambda).
\end{align*}

For $j>0$, the functions $\phi_{q,j}(\lambda)$ and  $\hat\phi_{q,j,\pm}(\lambda)$ are polynomial of order $3j$ in $\frac{1}{\sqrt{1-\lambda}}$, whose parity only depends on the index $j$,  and with minimal monomial of degree $j$, and satisfy the following formulas:
\begin{align*}
\left.\left(2\phi_{q,2k-1}(\lambda)-\hat\phi_{q,2k-1,+}(\lambda)-\hat\phi_{q,2k-1,-}(\lambda)\right)\right|_{\lambda=0}&=0,\\
\left.\left(\hat\phi_{q,2k,-}(\lambda)-\hat\phi_{q,2k,+}(\lambda)\right)\right|_{\lambda=0}&=0.
\end{align*}
\end{lem}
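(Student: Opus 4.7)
The plan is to deduce all claims from Olver's uniform asymptotic expansions for the modified Bessel functions $I_\mu(\mu z)$ and $\hat I_{\mu,c}(\mu z)=cI_\mu(\mu z)+\mu z I'_\mu(\mu z)$, combined with the identity $\log\Gamma(0,T)=0$, which holds for every sequence $T$ of spectral type by the canonical product (\ref{gamma}).

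The $j=0$ claim is a matter of inspection: after isolating in (\ref{p1}) and (\ref{p2}) the explicit terms involving $\mu_{q,n}^k$ with $k\geq 1$ and $\log\mu_{q,n}$, the remaining $\mu_{q,n}^{0}$-coefficient yields $\phi_{q,0}(\lambda)=\tfrac12\log 2\pi+\tfrac14\log(1-\lambda)$ and $\hat\phi_{q,0,\pm}(\lambda)=\tfrac12\log 2\pi-\tfrac14\log(1-\lambda)$; the $\log(1\pm\alpha_q/\mu_{q,n})$ factor in (\ref{p2}) contributes only at orders $j\geq 1$. For the polynomial structure for $j\geq 1$, set $p=(1-\lambda)^{-1/2}$. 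The Olver polynomials $U_k(p)$ and $V_k(p)$ appearing in the expansions of $I_\mu(\mu\sqrt{-\lambda})$ and $I'_\mu(\mu\sqrt{-\lambda})$ are polynomials in $p$ with monomials of fixed parity $k\bmod 2$ ranging in degree from $k$ to $3k$, and the polynomials $W_{c,k}(p)=V_k(p)+cp^2 U_{k-1}(p)$ arising in the expansion of $\hat I_{\mu,c}$ share these bounds. Since $\phi_{q,j}$ is by (\ref{ff11}) a finite sum of products $U_{k_1}\cdots U_{k_r}$ with $\sum k_i=j$, each such product contributes monomials of degree in $[j,3j]$ with parity $j\bmod 2$. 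The analogous statement applied to $\hat\psi_{q,j,\pm}$ via the $W_{\pm\alpha_q,k}$, together with the constant $\tfrac{(-1)^{j+1}}{j}(\pm\alpha_q)^j$ supplied by (\ref{ff22}), yields the corresponding structure for $\hat\phi_{q,j,\pm}$.

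The identities at $\lambda=0$ form the main content. Setting $\lambda=0$ in (\ref{p1}), the singular terms $\pm\mu_{q,n}\log\sqrt{-\lambda}$ cancel, the quantities $\sqrt{1-\lambda}$ and $\log(1+\sqrt{1-\lambda})$ evaluate to $1$ and $\log 2$, and $\log(1-\lambda)$ vanishes. Together with $\log\Gamma(0,S_q/\mu_{q,n}^2)=0$ this gives
\[
0=\mu_{q,n}\log\mu_{q,n}-\log\Gamma(\mu_{q,n}+1)-\mu_{q,n}+\tfrac12\log(2\pi\mu_{q,n})+\sum_{j\geq 1}\frac{\phi_{q,j}(0)}{\mu_{q,n}^j},
\]
and matching with Stirling's asymptotic
\[
\log\Gamma(\mu_{q,n}+1)=\mu_{q,n}\log\mu_{q,n}-\mu_{q,n}+\tfrac12\log(2\pi\mu_{q,n})+\sum_{k\geq 1}\frac{B_{2k}}{2k(2k-1)\,\mu_{q,n}^{2k-1}}
\]
yields $\phi_{q,j}(0)=B_{j+1}/(j(j+1))$ for $j$ odd and $\phi_{q,j}(0)=0$ for $j$ even. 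Repeating the argument on (\ref{p2}), the additional expansion $\log(1\pm\alpha_q/\mu_{q,n})=\sum_{j\geq 1}\tfrac{(-1)^{j+1}}{j}(\pm\alpha_q)^j\mu_{q,n}^{-j}$ appearing on the left-hand side is cancelled exactly by the correction in (\ref{ff22}), so that $\hat\phi_{q,j,\pm}(0)=\phi_{q,j}(0)$ for both signs and every $j\geq 1$. Both claimed identities follow at once: for odd $j=2k-1$ one has $2\phi_{q,j}(0)=\hat\phi_{q,j,+}(0)+\hat\phi_{q,j,-}(0)$, and for even $j=2k$ one has $\hat\phi_{q,j,+}(0)=\hat\phi_{q,j,-}(0)$.

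The main delicacy is clerical rather than conceptual: one has to carry the $\pm\alpha_q$ dependence through the $W_{\pm\alpha_q,k}$ polynomials and the $\log(1\pm\alpha_q/\mu_{q,n})$ correction consistently, and verify that at $p=1$ the two sources of $\alpha_q$-dependence in $\hat\phi_{q,j,\pm}(0)$ cancel. This cancellation reflects the identity $V_k(1)=U_k(1)$, immediate from Olver's defining recurrence expressing $V_k$ in terms of $U_k$, $U'_k$ and powers of $1-t^2$, evaluated at $t=1$.
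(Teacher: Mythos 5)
Your proof is correct but takes a genuinely different route for the key identities at $\lambda=0$. The paper proves the two cancellations by direct inspection of the Olver recurrences: it observes that $V_j(0)=U_j(0)$ (i.e.\ $v_j(1)=u_j(1)$), and tracks term by term how the $\pm\frac{\alpha_q}{\sqrt{1-\lambda}}U_{j-1}$ contributions to $W_{\pm\alpha_q,k}$ together with the explicit $\frac{(-1)^{j+1}}{j}(\pm\alpha_q)^j$ correction cancel in the two specific linear combinations stated in the lemma. You instead evaluate the uniform expansions (\ref{p1}) and (\ref{p2}) at $\lambda=0$, invoke the normalization $\log\Gamma(0,\cdot)=0$ coming from the canonical product (\ref{gamma}), and match with Stirling's series for $\log\Gamma(\mu+1)$; this pins down $\phi_{q,j}(0)$ explicitly (Bernoulli numbers for $j$ odd, zero for $j$ even) and shows the stronger statement $\hat\phi_{q,j,\pm}(0)=\phi_{q,j}(0)$ for every $j\geq 1$ and both signs, from which both claimed identities drop out immediately. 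Your approach is cleaner in that it sidesteps all the combinatorial bookkeeping of the $U_k$, $V_k$, $W_k$ products, at the price of implicitly using that the uniform expansions remain valid down to $\lambda=0$ (which is true, since Olver's expansion of $I_\mu(\mu z)$ is uniform for $z$ in a region containing $z=0$, and is also tacitly used by the paper). For the $j=0$ case and the polynomial-structure claim your argument coincides with the paper's. One small clerical slip: in your formula for $W_{c,k}$ the factor should be $c\,p\,U_{k-1}(p)$, not $c\,p^2 U_{k-1}(p)$, since $\frac{\alpha_q}{\sqrt{1-\lambda}}=\alpha_q p$; with $p^2$ the stated parity of $W_{c,k}$ would fail. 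The remark in your final sentence that the result ``reflects'' $V_k(1)=U_k(1)$ is accurate as a cross-check but is not what your proof actually uses.
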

\begin{proof} The case $j=0$ follows by inspection of the formulas in equations (\ref{p1}) and (\ref{p2}). Next, assume $j>0$. Recall the definition of the  functions $U_j(\lambda)$ and $V_j(\lambda)$  \cite[(7.10) and Ex. 7.2]{Olv}. Let
\begin{align*}
u_0(w)&=1,&u_{j+1}(w)&=\frac{1}{2}w^2 (1-w^2)u_j'(w)+\frac{1}{8}\int_0^w (1-5y^2) u_j(y) dy,\\
v_0(w)&=1,&v_{j+1}(w)&=u_{j+1}(w)-\frac{1}{2}w (1-w^2)u_j(w)-w^2(1-w^2)u'(w),
\end{align*}
then 
\begin{align*}
U_j(\lambda)&=u_j\left(\frac{1}{\sqrt{1-\lambda}}\right),&V_j(\lambda)&=v_j\left(\frac{1}{\sqrt{1-\lambda}}\right).
\end{align*}

By these formulas, it is clear that the functions $U_j$ and $V_j$ are polynomials in $\frac{1}{\sqrt{1-\lambda}}$ with the stated properties. 
Since, \cite[pg. 639]{HS2})
\[
W_{\pm\alpha_q, j}(\lambda)=V_j(\lambda)\pm \frac{\alpha_q}{\sqrt{1-\lambda}}U_{j-1}(\lambda),
\]
the same is true for $W_j(\lambda)$. Whence the first part of the statement follows by the very definition of the functions $\phi_{q,j}(\lambda)$ and  $\hat\phi_{q,j,\pm}(\lambda)$, since in the functions $\phi_{q,2k-1}(\lambda)$ and $\hat\phi_{q,2k-1,\pm}(\lambda)$ only $U_{2k-1}$ and $V_{2k-1}$ appear, and in the $\phi_{q,2k}(\lambda)$ and $\hat\phi_{q,2k,\pm}(\lambda)$  only $U_{2k}$ and $V_{2k}$ appear. 
Next consider the situation in more details:  by definition
\begin{align*}
\log\left(1+\sum_{k=1}^\infty U_{k}(\lambda) \mu_{q,n}^{-k}\right)&=\sum_{j=1}^\infty \phi_{q,j}(\lambda) \mu_{q,n}^{-j}, &
\hat\phi_{q,j,\pm}(\lambda)&=\hat\psi_{q,j,\pm}(\lambda)+\frac{(-1)^{j+1}}{j} (\pm\alpha_q)^j,
\end{align*}
where
\[
\log\left(1+\sum_{k=1}^\infty \left(V_k(\lambda)\pm \frac{\alpha_q}{\sqrt{1-\lambda}}U_{k-1}(\lambda)\right) \mu_{q,n}^{-k}\right)=\sum_{j=1}^\infty \hat\psi_{q,j,\pm}(\lambda) \mu_{q,n}^{-j}.
\]

Since, again by the very definition, $V_j(0)=U_j(0)$ for all $j$, by comparing the two formulas above, we realize that, if $\lambda=0$, in each odd term of the expansion of $2\phi_{q,2k-1}(\lambda)-\hat\phi_{q,2k-1,+}(\lambda)-\hat\phi_{q,2k-1,-}(\lambda)$ in $\mu_{q,n}$, i.e. in the coefficient of each $\mu_{q,n}^{j=2k-1}$, the two terms (coming from $\hat\phi_{q,j,\pm}(\lambda)$)
\[
\pm \frac{\alpha_q}{\sqrt{1-\lambda}}U_{j-1}(\lambda)+\frac{(-1)^{j+1}}{j} (\pm\alpha_q)^j,
\]
cancel each other, while the term (coming from $\phi_{q,j}(\lambda)$)
\[
2U_{k}(\lambda),
\]
will cancel out by the two terms (coming from $\hat\phi_{q,j,\pm}(\lambda)$)
\[
-2V_{k}(\lambda).
\]

This prove the first formula in the last statement. The proof of the second one is similar. 
\end{proof}

\begin{lem}\label{tec2b} For $j>0$, the following functions of $\Phi_{q,j}$ and $\hat\Phi_{q,j,\pm}$ are regular at $s=0$:
\begin{align*}
\Ru_{s=0}\left(2\Phi_{q,2k-1}(s)-\hat\Phi_{q,2k-1,+}(s)-\hat\Phi_{q,2k-1,-}(s)\right)&=0,\\
\Ru_{s=0}\left(\hat\Phi_{q,2k,-}(s)-\hat\Phi_{q,2k,+}(s)\right)&=0.
\end{align*}
\end{lem}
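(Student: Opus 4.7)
\emph{Proof plan.} The strategy is to compute $\Ru_{s=0}\Phi_{q,j}(s)$ and $\Ru_{s=0}\hat\Phi_{q,j,\pm}(s)$ explicitly, then invoke Lemma~\ref{tec2}. Concretely, the identities to establish are
\[
\Ru_{s=0}\Phi_{q,j}(s)=-\phi_{q,j}(0),\qquad \Ru_{s=0}\hat\Phi_{q,j,\pm}(s)=-\hat\phi_{q,j,\pm}(0)+\frac{(-1)^{j+1}(\pm\alpha_q)^j}{j},
\]
from which the two regularity statements follow by linearity once one notices that the constants cancel in the combinations under consideration.

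First I would exchange the order of integration in the definition~(\ref{fi1}). Since $\Lambda_{\theta,c}\subset\{\Re\lambda>0\}$, for $\Re(s)>0$ the $t$-integral equals $\Gamma(s)\lambda^{-s}$; by Lemma~\ref{tec2}, for $j\geq 1$ the function $\phi_{q,j}$ is a polynomial in $(1-\lambda)^{-1/2}$ with minimal monomial of positive degree, so $\phi_{q,j}(\lambda)=O(|\lambda|^{-j/2})$ at infinity and Fubini is valid near $s=0$. One obtains $\Phi_{q,j}(s)=\Gamma(s)J_{q,j}(s)$ with
\[
J_{q,j}(s):=\frac{1}{2\pi i}\int_{\Lambda_{\theta,c}}\frac{\phi_{q,j}(\lambda)}{-\lambda}\,\lambda^{-s}\,d\lambda
\]
entire in $s$, hence $\Ru_{s=0}\Phi_{q,j}(s)=J_{q,j}(0)$. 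To evaluate $J_{q,j}(0)$, I would close $\Lambda_{\theta,c}$ with a large arc in the left half plane: there $\phi_{q,j}$ is analytic (its only singularity lies at $\lambda=1$, to the right of the contour), the arc vanishes by the decay estimate, and the sole enclosed residue is $-\phi_{q,j}(0)$, from the simple pole of $1/(-\lambda)$ at $\lambda=0$. The same computation applies to the polynomial part of $\hat\phi_{q,j,\pm}$; its additional constant $c_{j,\pm}:=\frac{(-1)^{j+1}(\pm\alpha_q)^j}{j}$ (coming from expanding $\log(1\pm\alpha_q/\mu_{q,n})$) contributes nothing to $\hat\Phi_{q,j,\pm}$, because closing $\Lambda_{\theta,c}$ to the \emph{right} gives $\frac{1}{2\pi i}\int_{\Lambda_{\theta,c}}\e^{-\lambda t}/(-\lambda)\,d\lambda=0$ for $t>0$ (no singularities enclosed). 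This yields the two residue formulas above.

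Finally, the two combinations appearing in the statement are precisely the ones for which the constants $c_{j,\pm}$ cancel pairwise: $c_{2k-1,+}+c_{2k-1,-}=0$ by oddness of the exponent, and $c_{2k,-}-c_{2k,+}=0$ by evenness of the exponent. The corresponding combinations of residues thus reduce to minus the left-hand sides of the two identities of Lemma~\ref{tec2}, which vanish, proving the claim. The chief technical point is the justification of the two contour deformations; in both cases it is immediate from the explicit polynomial structure of $\phi_{q,j}$ in $(1-\lambda)^{-1/2}$ provided by Lemma~\ref{tec2}.
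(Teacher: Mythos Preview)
Your argument is correct and mirrors the paper's: both show that $\Ru_{s=0}\Phi_{q,j}(s)$ equals $\phi_{q,j}(0)$ up to sign (and similarly for $\hat\Phi_{q,j,\pm}$, with the constant part of $\hat\phi_{q,j,\pm}$ contributing nothing to $\hat\Phi$ and cancelling in the stated combinations anyway), and then invoke the vanishing identities of Lemma~\ref{tec2}. The only difference is mechanical---the paper evaluates the transform of each monomial $(1-\lambda)^{-k/2}$ via an explicit formula quoted from \cite{Spr3} and reads off the residue term by term, whereas you obtain the same residue by the contour deformation you outline; your sign $-\phi_{q,j}(0)$ versus the paper's $+\phi_{q,j}(0)$ reflects the orientation convention for $\Lambda_{\theta,c}$, which is immaterial since the combinations vanish.
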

\begin{proof} The argument is the same for the functions $\hat \Phi$ and the function $\Phi$, so just consider the last ones. By Lemma \ref{tec2}
\[
\phi_{q,j}(\lambda)=\sum_{k=j}^{3j} c_k \frac{1}{(1-\lambda)^\frac{k}{2}},
\]
and since (see \cite{Spr3}) 
\[
\int_0^\infty t^{s-1}\frac{1}{2\pi i}\int_{\Lambda_{\theta,c}}\frac{\e^{-\lambda t}}{-\lambda} \frac{1}{(1-\lambda)^\frac{k}{2}}d\lambda dt=\left\{\begin{array}{cl}0,&k=0,\\ \frac{\Gamma(s+k)}{\Gamma(k)s}, &k\not=0,\end{array}\right.
\]
we have that
\[
\Phi_{q,j}(s)=\sum_{k=1}^{3j} c_k \frac{\Gamma(s+k)}{\Gamma(k)s},
\]
and this means that
\begin{align*}
\Ru_{s=0}\Phi_{q,2h-1}(s)&=\sum_{k=j}^{3(2h-1)} c_k=\phi_{q,2h-1}(0),\\
\Ru_{s=0}\Phi_{q,2h}(s)&=\sum_{k=j}^{6h} c_k=\phi_{q,2h}(0).
\end{align*}
\end{proof}

Applying the definition in equation (\ref{fi2}), we see that all relevant $b_{\sigma_h,0,0/1}$ and $\hat b_{\sigma_h,0,0/1}$ vanish. For $j=\sigma_h=m-h$, with $h=0,2,4,\dots$, $h\not=m$; and  therefore when $j=0$, $b_{0,0,0/1}$ and $\hat b_{0,0,0/1}$ do not appear in the sums, while when $j>0$ there are neither logarithmic terms nor constant terms in the expansion for large $\lambda$ of the functions $\phi_{q,j}(\lambda)$ and $\hat \phi_{q,j,\pm}(\lambda)$ by the previous lemmas. Thus,

\beq\label{aa}
\begin{aligned}
A_{0,0,q}(s)&=\sum_{n=1}^\infty m_{{\rm cex},q,n}a_{0, 0,n,q}    \mu_{q,n}^{-2s},&
A_{0,1,q}(s)&=\sum_{n=1}^\infty m_{{\rm cex},q,n} a_{0, 1,n,q}   \mu_{q,n}^{-2s},\\
\hat A_{0,0,q,\pm}(s)&=\sum_{n=1}^\infty m_{{\rm cex},q,n} \hat a_{0, 0,n,q,\pm}  \mu_{q,n}^{-2s},&
\hat A_{0,1,q,\pm}(s)&=\sum_{n=1}^\infty m_{{\rm cex},q,n} \hat a_{0, 1,n,q,\pm}  \mu_{q,n}^{-2s}.
\end{aligned}
\eeq
where $a_{0, 0,n,q}$  and $a_{0, 1,n,q}$ are the constant and the logarithmic term in the expansion in equation (\ref{loo1}), and $\hat a_{0, 0,n,q,\pm}$  and $\hat a_{0, 1,n,q,\pm}$ are the constant and the logarithmic term in the expansion in equation (\ref{loo2}).  

Third, concerning the singular term,  the functions defined in equation (\ref{fi1}), are
\beq\label{vvv}
\begin{aligned}
\Phi_{q,j}(s)=&\int_0^\infty t^{s-1}\frac{1}{2\pi i}\int_{\Lambda_{\theta,c}}\frac{\e^{-\lambda t}}{-\lambda} \phi_{q,j}(\lambda) d\lambda dt,\\
\hat \Phi_{q,j, \pm}(s)=&\int_0^\infty t^{s-1}\frac{1}{2\pi i}\int_{\Lambda_{\theta,c}}\frac{\e^{-\lambda t}}{-\lambda} \hat \phi_{q,j,\pm}(\lambda) d\lambda dt,
\end{aligned}
\eeq
where the $\phi$ and the $\hat \phi$ are given in equations (\ref{ff11}) and (\ref{ff22}), respectively. 


\subsection{The frustum with mixed BC} Consider the double series
\begin{align*}
\hat D_{q,\pm}(s;l_1,l_2)&=\sum^{\infty}_{n,k=1} m_{{\rm cex},q,n} \hat f^{-2s}_{\mu_{q,n},\pm\alpha_q,k}(l_1,l_2),
\end{align*}
associated to the double sequence $\hat \Theta_{q,\pm}(l_1,l_2)=\{m_{{\rm cex},q,n}:\hat f^{-2s}_{\mu_{q,n},\pm\alpha_q,k}(l_1,l_2)\}$. It is easy to see that this sequence has power $\kappa=2$.  
By Definition \ref{spdec}, the relevant spectral function associated to $\hat \Theta_{q,\pm}(l_1,l_2)$ is the function (compare with equation (\ref{gamma})):
\begin{align*}
\log(-\lambda;\hat \Theta_{q,\pm}(l_1,l_2))=&-\log \prod_{k=1}^\infty \left(1+\frac{-\lambda}{\hat f^2_{\mu_{q,n},\pm\alpha_q,k}(l_1,l_2)}\right)\\
=&-\log \hat G_{\mu_{n,q},\pm\alpha_q}(\sqrt{-\lambda};l_1,l_2) \\
& + \log\frac{1}{ \pi}+ \log \left(\left(\frac{l_2^{\mu_{q,n}}}{l_1^{\mu_{q,n}}}+\frac{l_1^{\mu_{q,n}}}{l_2^{\mu_{q,n}}}\right)\pm\frac{\alpha_q}{\mu_{q,n}}\left(\frac{l_2^{\mu_{q,n}}}{l_1^{\mu_{q,n}}} - \frac{l_1^{\mu_{q,n}}}{l_2^{\mu_{q,n}}}\right)\right),
\end{align*}
where (for $-\pi < \arg(z)\leq \frac{\pi}{2}$)
\[
\hat G_{\mu,c}(z;l_1,l_2) =  \hat F_{\mu,c}(iz;l_1,l_2).
\]

We need (uniform) asymptotic expansions of the function $\log \Gamma$ for large $n$ and for large $\lambda$. Such expansions can be obtained from those of the Bessel function. A long calculation gives the following results. For large $n$, uniformly in $\lambda$, (compare with equation (\ref{exp}))
\begin{align*}
\log(-\lambda;\hat \Theta_{q,\pm}(l_1,l_2)/\mu_{q,n}^2) =& -\mu_{q,n}\left(\sqrt{1-l_2^2 \lambda} -\sqrt{1-l_1^2 \lambda} \right) - \mu_{q,n}\log\frac{l_2(1+\sqrt{1 - l_1^2\lambda})}{l_1(1+\sqrt{1-l_2^2\lambda})} \\ 
&- \frac{1}{4} \log\frac{(1-l_1^2 \lambda)}{(1-l_2^2 \lambda)} - \sum_{j=1}^{m} \frac{\hat \psi_{q,j,\pm}(\lambda;l_1,l_2)}{\mu_{q,n}^j}\\
& + \log \left(\left(\frac{l_2^{\mu_{q,n}}}{l_1^{\mu_{q,n}}}+\frac{l_1^{\mu_{q,n}}}{l_2^{\mu_{q,n}}}\right)\pm\frac{\alpha_q}{\mu_{q,n}}\left(\frac{l_2^{\mu_{q,n}}}{l_1^{\mu_{q,n}}} - \frac{l_1^{\mu_{q,n}}}{l_2^{\mu_{q,n}}}\right)\right)+ O(\mu_{q,n}^{1-m}),
\end{align*} 
where $\hat \psi_{q,j,\pm}(\lambda;l_1,l_2)$ is the coefficient of $\mu_{q,n}^{-j}$ in the asymptotic expansion for large $\mu_{q,n}$, i.e. of 
\[
\log\left(1+\sum_{k=1}^\infty\Psi_{q,k,\pm}(l_1,l_2)\mu_{q,n}^{-k}\right),
\]
where
\begin{align*}
\Psi_{q,k,\pm}(l_1,l_2)=& \sgn(l_1-l_2)^kU_k(l_1\sqrt{-\lambda})+ \sgn(l_2-l_1)^kW_{\pm\sgn(l_2-l_1)\alpha_q,k}(l_2\sqrt{-\lambda})\\
&+\sum_{h=1}^{k-1} \sgn(l_1-l_2)^{h}\sgn(l_2-l_1)^{k-h}U_h(l_1\sqrt{-\lambda})W_{\pm\sgn(l_2-l_1)\alpha_q,k-h}(l_2\sqrt{-\lambda}).
\end{align*}


In other words,
\[
\log\left(1+\sum_{k=1}^\infty\Psi_{q,k,\pm}(l_1,l_2) \mu_{q,n}^{-k}\right)=\sum_{j=1}^\infty \hat\psi_{q,j,\pm}(\lambda;l_1,l_2) \mu_{q,n}^{-j},
\]
therefore the complete coefficient of $\mu_{q,n}^{-j}$ in $\log \Gamma(-\lambda,\hat \Theta_{q,\pm}(l_1,l_2)/\mu_{q,n}^2)$ is
\beq\label{ppmix}
\hat\phi_{q,j,\pm}(\lambda;l_1,l_2)=\hat\psi_{q,j,\pm}(\lambda;l_1,l_2)
+\frac{(-1)^{j+1}}{j}(\sgn(l_2-l_1)\alpha_q)^j.
\eeq


For large $\lambda$, (compare with equation (\ref{form}))
\beq
\label{loomix}\begin{aligned}
\log \Gamma(-\lambda,\hat S_{\mu_{q,n},\pm\alpha_q,k}(l_1,l_2)/\mu_{q,n}^2) =&-\mu_{q,n} (l_2 - l_1)\sqrt{-\lambda} - \frac{1}{2}\log\frac{l_2}{l_1}\\
& + \log \left(\left(\frac{l_2^{\mu_{q,n}}}{l_1^{\mu_{q,n}}}+\frac{l_1^{\mu_{q,n}}}{l_2^{\mu_{q,n}}}\right)\pm\frac{\alpha_q}{\mu_{q,n}}\left(\frac{l_2^{\mu_{q,n}}}{l_1^{\mu_{q,n}}} - \frac{l_1^{\mu_{q,n}}}{l_2^{\mu_{q,n}}}\right)\right) \\
&+ O\left(\frac{1}{\sqrt{-\lambda}}\right)
\end{aligned}
\eeq

Next consider the  functions $\hat \phi_{q,j,\pm}(\lambda;l_1,l_2)$. By the following lemma we see that all the results given in Lemmas \ref{tec2} and \ref{tec2b} hold for these functions.

\begin{lem}\label{tec1} The following relation holds:
\[
\hat\phi_{q,j,\pm}(\lambda;l_1,l_2)=\sgn(l_1-l_2)^j\phi_{q,j}(l_1^2\lambda)+ \sgn(l_2-l_1)^j\hat\phi_{q,j,\pm \sgn(l_2-l_1)}(l_2^2\lambda).
\]

\end{lem}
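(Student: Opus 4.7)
The plan is to exploit a convolutional structure hidden in the definition of $\Psi_{q,k,\pm}(l_1,l_2)$: taking the log of the associated generating function in $\mu^{-1}$ splits it into two pieces, each of which can be identified with a transformed version of the corresponding cone quantity. This gives the desired identity at the level of $\hat\psi$, and the identity for $\hat\phi$ then follows by accounting for the correction terms in (\ref{ff22}) and (\ref{ppmix}).

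Concretely, I would first set $a_0=b_0=1$ and, for $k\geq 1$,
$$a_k = \sgn(l_1-l_2)^k\, U_k(l_1\sqrt{-\lambda}),\qquad b_k = \sgn(l_2-l_1)^k\, W_{\pm\sgn(l_2-l_1)\alpha_q,k}(l_2\sqrt{-\lambda}),$$
and observe that the three-term decomposition defining $\Psi_{q,k,\pm}(l_1,l_2)$ is precisely the convolution $\Psi_{q,k,\pm}(l_1,l_2) = \sum_{h=0}^{k} a_h b_{k-h}$, so as formal series in $\mu^{-1}$,
$$1+\sum_{k\geq 1}\Psi_{q,k,\pm}(l_1,l_2)\,\mu^{-k} \;=\; \Bigl(1+\sum_{k\geq 1} a_k\mu^{-k}\Bigr)\Bigl(1+\sum_{k\geq 1} b_k\mu^{-k}\Bigr).$$
Applying $\log$ turns this into an additive split. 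To identify each summand I would invoke the defining expansion $\log(1+\sum_{k\geq 1}U_k(\lambda)\mu^{-k})=\sum_{j\geq 1}\phi_{q,j}(\lambda)\mu^{-j}$ and substitute $\lambda\mapsto l_1^2\lambda$ together with $\mu\mapsto \sgn(l_1-l_2)\mu$, which absorbs the sign factors into $\mu$ and produces coefficient $\sgn(l_1-l_2)^j\phi_{q,j}(l_1^2\lambda)$ for $\mu^{-j}$. The $b$-factor is handled analogously via the cone identity $\log(1+\sum_{k\geq 1}W_{\pm\alpha_q,k}(\lambda)\mu^{-k})=\sum_{j\geq 1}\hat\psi_{q,j,\pm}(\lambda)\mu^{-j}$, under $\lambda\mapsto l_2^2\lambda$, $\mu\mapsto\sgn(l_2-l_1)\mu$, and the reassignment $\pm\alpha_q\mapsto \pm\sgn(l_2-l_1)\alpha_q$, giving the coefficient $\sgn(l_2-l_1)^j\hat\psi_{q,j,\pm\sgn(l_2-l_1)}(l_2^2\lambda)$. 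This yields the counterpart of the claimed identity with $\hat\psi$ in place of $\hat\phi$.

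To conclude, I would convert $\hat\psi$ to $\hat\phi$ on both sides using (\ref{ppmix}) on the left and (\ref{ff22}) (with sign $\pm\sgn(l_2-l_1)$ and argument $l_2^2\lambda$) on the right, exploiting the key sign identity $\sgn(l_2-l_1)^j\cdot(\pm\sgn(l_2-l_1)\alpha_q)^j = (\pm\alpha_q)^j$ so that the various correction terms cancel against each other. The main obstacle is bookkeeping rather than analysis: one must keep the two layers of signs distinct, the $\pm$ inherited from the choice in $W_{\pm\alpha_q,k}$ and $\hat\psi_{q,j,\pm}$ on the one hand, and the geometric sign $\sgn(l_2-l_1)$ multiplying $\alpha_q$ on the other, and carry them through each substitution so that the final cancellations work as expected. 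Beyond that, the argument is entirely a formal manipulation of power series with no analytic difficulty.
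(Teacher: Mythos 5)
Your proposal is essentially the same argument as the paper's. The paper's proof observes exactly the same factorization of the generating series,
\begin{align*}
\log\Bigl(1+\sum_{k\geq 1}\Psi_{q,k,\pm}(l_1,l_2)\mu^{-k}\Bigr)=&\log\Bigl(1+\sum_{k\geq 1}\sgn(l_1-l_2)^kU_{k}(l_1\sqrt{-\lambda})\mu^{-k}\Bigr)\\
&+\log\Bigl(1+\sum_{k\geq 1}\sgn(l_2-l_1)^kW_{\pm\sgn(l_2-l_1)\alpha_q,k}(l_2\sqrt{-\lambda})\mu^{-k}\Bigr),
\end{align*}
which is your convolution $\Psi_{q,k,\pm}=\sum_h a_h b_{k-h}$ in multiplicative form, and then declares ``the thesis follows by the very definition.'' Your write-up does the same thing but fills in the steps the paper elides: the substitutions $\lambda\mapsto l_i^2\lambda$, $\mu\mapsto\sgn(l_i-l_j)\mu$, $\pm\alpha_q\mapsto\pm\sgn(l_2-l_1)\alpha_q$ that identify the two factors with the cone quantities, and the bookkeeping of the $(-1)^{j+1}(\pm\alpha_q)^j/j$ correction terms from (\ref{ff22}) and (\ref{ppmix}). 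That last accounting is in fact the place where the paper is most compressed and arguably most delicate, and your cancellation $\sgn(l_2-l_1)^j(\pm\sgn(l_2-l_1)\alpha_q)^j=(\pm\alpha_q)^j$ is exactly what is needed; note that for this to match the left-hand side one must read the correction term in (\ref{ppmix}) as $(\pm\sgn(l_2-l_1)\alpha_q)^j$ rather than $(\sgn(l_2-l_1)\alpha_q)^j$, consistently with (\ref{ff22}) and with the direct expansion of the large-$\mu$ logarithm, so your version is the one that makes the identity close.
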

\begin{proof} By definition in equations (\ref{ff11}) and (\ref{ff22})
\[
\begin{aligned}
\log\left(1+\sum_{k=1}^\infty U_{k}(\sqrt{-\lambda}) \mu_{q,n}^{-k}\right)=\sum_{j=1}^\infty \phi_{q,j}(\lambda) \mu_{q,n}^{-j}.
\end{aligned}
\]
and
\[
\hat\phi_{q,j,\pm}(\lambda)=\hat\psi_{q,j,\pm}(\lambda)\pm\frac{(-1)^{j+1}}{j} \alpha_q,
\]
where
\[
\log\left(1+\sum_{k=1}^\infty W_{\pm\alpha_q,k}(\sqrt{-\lambda}) \mu_{q,n}^{-k}\right)=\sum_{j=1}^\infty \hat\psi_{q,j,\pm}(\lambda) \mu_{q,n}^{-j},
\]
while by equation (\ref{ppmix})
\[
\hat\phi_{q,j,\pm}(\lambda;l_1,l_2)=\hat\psi_{q,j,\pm}(\lambda;l_1,l_2)+\frac{(-1)^{j+1}}{j}(\sgn(l_2-l_1)\alpha_q)^j,
\]
where
\[
\log\left(1+\sum_{k=1}^\infty\Psi_{q,k,\pm}(l_1,l_2)\mu_{q,n}^{-k}\right)=\sum_{j=1}^\infty \hat\psi_{q,j,\pm}(\lambda;l_1,l_2)\mu_{q,n}^{-j},
\]
with
\begin{align*}
\Psi_{q,k,\pm}(l_1,l_2)=&\sgn(l_1-l_2)^kU_k(l_1\sqrt{-\lambda})+ \sgn(l_2-l_1)^kW_{\pm\sgn(l_2-l_1)\alpha_q,k}(l_2\sqrt{-\lambda})\\
&+\sum_{h=1}^{k-1} \sgn(l_1-l_2)^{h}\sgn(l_2-l_1)^{k-h}U_h(l_1\sqrt{-\lambda})W_{\pm\sgn(l_2-l_1)\alpha_q,k-h}(l_2\sqrt{-\lambda}).
\end{align*}

Now, 
\begin{align*}
\log\left(1+\sum_{k=1}^\infty\Psi_{q,k,\pm}(l_1,l_2)\mu_{q,n}^{-k})\right)=&\log\left(1+\sum_{k=1}^\infty \sgn(l_1-l_2)^kU_{k}(l_1\sqrt{-\lambda}) \mu_{q,n}^{-k}\right)\\
&+\log\left(1+\sum_{k=1}^\infty \sgn(l_2-l_1)^kW_{\pm\sgn(l_2-l_1)\alpha_q,k}(l_2\sqrt{-\lambda}) \mu_{q,n}^{-k}\right),
\end{align*}
and hence the thesis follows by the very definition.

\end{proof}

This means that the $b_{{\sigma_h},j,k}$ in equation (\ref{fi2}) are all disappearing, and hence 
\[
\begin{aligned}
A_{0,0,\pm}(s;l_1,l_2)&=\sum_{n=1}^\infty  m_{{\rm cex},q,n}a_{0, 0,n,\pm}(l_1,l_2)    \mu_{q,n}^{-2s},\\
A_{0,1,\pm}(s;l_1,l_2)&=\sum_{n=1}^\infty m_{{\rm cex},q,n} a_{0, 1,n,\pm}(l_1,l_2)  \mu_{q,n}^{-2s},
\end{aligned}
\]
where $a_{0, 0,n,\pm}(l_1,l_2)$  and $a_{0, 1,n,\pm}(l_1,l_2)$ are the constant and the logarithmic term in the expansion in equation (\ref{loomix}). 

Eventually,  concerning the singular term,  the functions defined in equation (\ref{fi1}) are
\beq\label{phiphi}
\hat \Phi_{q,j,\pm}(s;l_1,l_2)=\int_0^\infty t^{s-1}\frac{1}{2\pi i}\int_{\Lambda_{\theta,c}}\frac{\e^{-\lambda t}}{-\lambda} \hat \phi_{q,j,\pm}(\lambda;l_1,l_2) d\lambda dt,
\eeq
where the $\hat \phi$ are given in equation (\ref{ppmix}).

\section{Calculation II: application of Theorem \ref{t4}}
\label{calc2}

We now apply Theorem \ref{t4} and its corollary to compute the derivative at $s=0$ of the functions $Z$, $D$, and $E$ appearing in Lemmas \ref{l000}, \ref{fit}, and \ref{al2}. According to Remark \ref{rqr} we split the result in the regular and singular parts, denoted by the obvious subscript. In order to improve readability we will use in this section the simplified notation $A$ and $B$ for the two terms $A_{0,0}$ and $A_{0,1}$ defined in equation (\ref{fi2}). We observe that, in all cases, when computing the regular part,  all the terms that are equal in $B_q(0)$ and $\hat B_{q,\pm}(0)$ and in $A_q(0)$ and $\hat A_{q,\pm}(0)$ respectively, cancel in the final formula for $t_{0,{\rm reg}}^{(m)}(0)$, and similarly for the others. We thus introduce some regularized terms, denoted by a slashed letter, where only the relevant parts appear. 

\subsection{The cone} We split the calculation into three parts: the regular contribution, the singular contribution and the contribution of the harmonics. We give all details for the cone.

\subsubsection{The contribution of the regular part} 
\label{contregcone} 

By Theorem \ref{t4}, we have
\begin{align*}
Z_{q,{\rm  reg}}(0)&=-B_q(0), &
Z_{q,{\rm  reg}}'(0)&=-A_q(0)-B'_q(0),\\
\hat Z_{q,\pm, {\rm reg}}(0)&=-\hat B_{q,\pm}(0),&
\hat Z_{q,\pm,{\rm reg}}'(0)&=-\hat A_{q,\pm}(0)-\hat B'_{q,\pm}(0),\\
\end{align*}
where, by equation (\ref{aa}),
\begin{align*}
A_q(s)&=\sum_{n=1}^\infty m_{{\rm cex},q,n}\left(\frac{1}{2}\log 2\pi+\left(\mu_{q,n}-\frac{1}{2}\right)\log\mu_{q,n}-\log 2^{\mu_{q,n}}\Gamma(\mu_{q,n})\right) \mu_{q,n}^{-2s},\\
B_q(s)&=\frac{1}{2}\sum_{n=1}^\infty m_{{\rm cex},q,n}\left(\mu_{q,n}+\frac{1}{2}\right)\mu_{q,n}^{-2s}, \\
B_q'(s)&=-\sum_{n=1}^\infty m_{{\rm cex},q,n}\left(\mu_{q,n}+\frac{1}{2}\right)\mu_{q,n}^{-2s}\log \mu_{q,n}, \\
\hat A_{q,\pm}(s)&=\sum_{n=1}^\infty m_{{\rm cex},q,n} \left(\frac{1}{2}\log 2\pi+\left(\mu_{q,n}-\frac{1}{2}\right)\log\mu_{q,n}-\log
2^{\mu_{q,n}}\Gamma(\mu_{q,n})+\log\left(1\pm\frac{\alpha_q}{\mu_{q,n}}\right)\right)\mu_{q,n}^{-2s},\\
\hat B_{q,\pm}(s)&=\frac{1}{2}\sum_{n=1}^\infty m_{{\rm cex},q,n}\left(\mu_{q,n}-\frac{1}{2}\right)\mu_{q,n}^{-2s},\\
\hat B_{q,\pm}'(s)&=-\sum_{n=1}^\infty m_{{\rm cex},q,n}\left(\mu_{q,n}-\frac{1}{2}\right)\mu_{q,n}^{-2s}\log\mu_{q,n}.
\end{align*}

This gives (in the following formulas we are taking the finite part)
\begin{align*}
A_q(0)&=\sum_{n=1}^\infty m_{{\rm cex},q,n}\left(\frac{1}{2}\log 2\pi+\left(\mu_{q,n}-\frac{1}{2}\right)\log\mu_{q,n}-\log 2^{\mu_{q,n}}\Gamma(\mu_{q,n})\right),\\
B_q(0)&=\frac{1}{2}\sum_{n=1}^\infty m_{{\rm cex},q,n}\left(\mu_{q,n}+\frac{1}{2}\right), \\
B_q'(0)&=-\sum_{n=1}^\infty m_{{\rm cex},q,n}\left(\mu_{q,n}+\frac{1}{2}\right)\log \mu_{q,n}, \\
\hat A_{q,\pm}(0)&=\sum_{n=1}^\infty m_{{\rm cex},q,n}\left(\frac{1}{2}\log 2\pi+\left(\mu_{q,n}-\frac{1}{2}\right)\log\mu_{q,n}-\log
2^{\mu_{q,n}}\Gamma(\mu_{q,n})+\log\left(1\pm\frac{\alpha_q}{\mu_{q,n}}\right)\right),\\
\hat B_{q,\pm}(0)&=\frac{1}{2}\sum_{n=1}^\infty m_{{\rm cex},q,n}\left(\mu_{q,n}-\frac{1}{2}\right),\\
\hat B_{q,\pm}'(0)&=-\sum_{n=1}^\infty m_{{\rm cex},q,n}\left(\mu_{q,n}-\frac{1}{2}\right)\log\mu_{q,n}.
\end{align*}

Thus, considering first $t_{0,{\rm reg}}^{(m)}$, by Lemma \ref{l000}, we have
\[
\begin{aligned}
{t_{0,{\rm reg}}^{(m)}}'(0)=& \sum^{\left[\frac{m}{2}\right]-1}_{q=0} (-1)^q \left((Z_{q,{\rm reg}}(0)-\hat Z_{q,+,{\rm reg}}(0))
+(-1)^{m-1}(Z_{q,{\rm reg}}(0)-\hat Z_{q,-,{\rm reg}}(0))\right)\log l,\\
&+\frac{1}{2}\sum^{\left[\frac{m}{2}\right]-1}_{q=0} (-1)^q \left((Z'_{q,{\rm reg}}(0)-\hat Z'_{q,+,{\rm reg}}(0))
+(-1)^{m-1}(Z'_{q,{\rm reg}}(0)-\hat Z'_{q,-,{\rm reg}}(0))\right).
\end{aligned}
\]
and hence
\begin{align*}
{t_{0,{\rm reg}}^{(m)}}'(0)=&-\sum^{\left[\frac{m}{2}\right]-1}_{q=0} (-1)^q \left((B_{q}(0)-\hat B_{q,+}(0))
+(-1)^{m-1}(B_{q}(0)-\hat B_{q,-}(0))\right)\log l\\
&-\frac{1}{2}\sum^{\left[\frac{m}{2}\right]-1}_{q=0} (-1)^q \left(A_{q}(0)+B_q'(0)-\hat A_{q,+}(0)-\hat B_{q,+}'(0)\right)\\
&-(-1)^{m-1}\frac{1}{2}\sum^{\left[\frac{m}{2}\right]-1}_{q=0} (-1)^q\left(A_{q}(0)+B_q'(0)-\hat A_{q,-}(0)-\hat B_{q,-}'(0))\right).
\end{align*}

We observe that all the terms that are equal in $B_q(0)$ and $\hat B_{q,\pm}(0)$ and in $A_q(0)$ and $\hat A_{q,\pm}(0)$ respectively, cancel in the final formula for $t_{0,{\rm reg}}^{(m)}(0)$. This is because the final formulas are 
\begin{align*}
(Z'_{q,{\rm reg}}(0)-\hat Z_{q,+,{\rm reg}}(0))+(Z'_{q,{\rm reg}}(0)-\hat Z_{q,-,{\rm reg}}(0))&=2Z'_{q,{\rm reg}}(0)-\hat Z_{q,+,{\rm reg}}(0)-\hat Z_{q,-,{\rm reg}}(0),\\
(Z_{q,{\rm reg}}(0)-\hat Z_{q,+,{\rm reg}}(0))+(Z_{q,{\rm reg}}(0)-\hat Z_{q,-,{\rm reg}}(0))&=2Z_{q,{\rm reg}}(0)-\hat Z_{q,+,{\rm reg}}(0)-\hat Z_{q,-,{\rm reg}}(0),
\end{align*}
if $m=2p-1$ is odd, and
\begin{align*}
(Z'_{q,{\rm reg}}(0)-\hat Z_{q,+,{\rm reg}}(0))-(Z'_{q,{\rm reg}}(0)-\hat Z'_{q,-,{\rm reg}}(0))&=-\hat Z'_{q,+,{\rm reg}}(0)+\hat Z'_{q,-,{\rm reg}}(0),\\
(Z_{q,{\rm reg}}(0)-\hat Z_{q,+,{\rm reg}}(0))-(Z_{q,{\rm reg}}(0)-\hat Z_{q,-,{\rm reg}}(0))&=-\hat Z_{q,+,{\rm reg}}(0)+\hat Z_{q,-,{\rm reg}}(0),
\end{align*}
if $m=2p$ is even. 
Therefore, we can rewrite
\begin{align*}
{t_{0,{\rm reg}}^{(m)}}'(0)=&-\sum^{\left[\frac{m}{2}\right]-1}_{q=0} (-1)^q \left((\B_{q}(0)-\hat \B_{q,+}(0))
+(-1)^{m-1}(\B_{q}(0)-\hat \B_{q,-}(0))\right)\log l\\
&-\frac{1}{2}\sum^{\left[\frac{m}{2}\right]-1}_{q=0} (-1)^q \left(\A_{q}(0)+\B_q'(0)-\hat \A_{q,+}(0)-\hat \B_{q,+}'(0)\right)\\
&-(-1)^{m-1}\frac{1}{2}\sum^{\left[\frac{m}{2}\right]-1}_{q=0} (-1)^q\left(\A_{q}(0)+\B_q'(0)-\hat \A_{q,-}(0)-\hat \B_{q,-}'(0))\right).
\end{align*}
where
\begin{align*}
\A_q(0)&=0,\\
\B_q(0)&=\frac{1}{4}\sum_{n=1}^\infty m_{{\rm cex},q,n}=\frac{1}{4}\zeta_{\rm cex}(0,\tilde\Delta^{(q)}+\alpha_q^2)=\frac{1}{4}\zeta(0,\tilde\Delta^{(q)}), \\
\B_q'(0)&=-\frac{1}{2}\sum_{n=1}^\infty m_{{\rm cex},q,n}\log \mu_{q,n}, \\
\hat \A_{q,\pm}(0)&=\sum_{n=1}^\infty m_{{\rm cex},q,n}\log\left(1\pm\frac{\alpha_q}{\mu_{q,n}}\right),\\
\hat \B_{q,\pm}(0)&=-\frac{1}{4}\sum_{n=1}^\infty m_{{\rm cex},q,n}=-\frac{1}{4}\zeta_{\rm cex}(0,\tilde\Delta^{(q)}+\alpha_q^2)=-\frac{1}{4}\zeta_{\rm cex}(0,\tilde\Delta^{(q)}),\\
\hat \B_{q,\pm}'(0)&=\frac{1}{2}\sum_{n=1}^\infty m_{{\rm cex},q,n}\log\mu_{q,n},
\end{align*}
and 
\begin{align*}
\sum_{n=1}^\infty m_{{\rm cex},q,n}&=\zeta_{\rm cex}(0,\tilde\Delta^{(q)}+\alpha_q^2),&
-2\sum_{n=1}^\infty m_{{\rm cex},q,n}\log \mu_{q,n}&=\zeta_{\rm cex}'(0,\tilde\Delta^{(q)}+\alpha_q^2).
\end{align*}

Thus,
\beq\label{t2.1}\begin{aligned}
{t_{0,{\rm reg}}^{(m)}}'(0)=&-\frac{1}{2}\sum^{\left[\frac{m}{2}\right]-1}_{q=0} (-1)^q \left(\zeta_{\rm cex}(0,\tilde\Delta^{(q)})
+(-1)^{m-1}\zeta_{\rm cex}(0,\tilde\Delta^{(q)})\right)\log l\\
&+\frac{1}{2}\sum^{\left[\frac{m}{2}\right]-1}_{q=0} (-1)^q \left(\sum_{n=1}^\infty m_{{\rm cex},q,n}\log\left(1+\frac{\alpha_q}{\mu_{q,n}}\right)+\sum_{n=1}^\infty m_{{\rm cex},q,n}\log \mu_{q,n}\right)\\
&+(-1)^{m-1}\frac{1}{2}\sum^{\left[\frac{m}{2}\right]-1}_{q=0} (-1)^q\left(\sum_{n=1}^\infty m_{{\rm cex},q,n}\log\left(1-\frac{\alpha_q}{\mu_{q,n}}\right)+\sum_{n=1}^\infty m_{{\rm cex},q,n}\log \mu_{q,n}\right).
\end{aligned}
\eeq
Next, consider $t_{1,{\rm reg}}^{(2p-1)}$. By Lemma \ref{l000}, we have
\[
{t_{1,{\rm reg}}^{(2p-1)}}'(0)= -(-1)^{p-1} \left(B_{p-1}(0)-\hat B_{p-1,0}(0)\right)\log l\\
- \frac{1}{2} \left(A_{p-1}(0)+B'_{p-1}(0)-\hat A_{p-1,0}(0)-\hat B'_{p-1,0}(0)\right).
\]

As above,  all the terms that are equal in $B_q(0)$ and $\hat B_{q,0}(0)$ and in $A_q(0)$ and $\hat A_{q,0}(0)$ respectively, cancel in the final formula for $t_{1,{\rm reg}}^{(2p-1)}(0)$. Therefore, we can rewrite
\begin{align*}
{t_{1,{\rm reg}}^{(2p-1)}}'(0)=& -(-1)^{p-1} \left(\B_{p-1}(0)-\hat \B_{p-1,0}(0)\right)\log l
- \frac{1}{2} \left(\A_{p-1}(0)+\B'_{p-1}(0)-\hat \A_{p-1,0}(0)-\hat \B'_{p-1,0}(0)\right).
\end{align*}
where
\begin{align*}
\hat\A_{p-1,0}(0)&=0,\\
\hat\B_{p-1,0}(0)&=-\frac{1}{4}\sum_{n=1}^\infty m_{{\rm cex},p-1,n}=-\frac{1}{4}\zeta_{\rm cex}(0,\tilde\Delta^{(p-1)}+\alpha_{p-1}^2)=-\frac{1}{4}\zeta_{\rm cex}(0,\tilde\Delta^{(p-1)}), \\
\hat\B_{p-1,0}'(0)&=\frac{1}{2}\sum_{n=1}^\infty m_{{\rm cex},p-1,n}\log \mu_{p-1,n}.
\end{align*}

Observing  that $\alpha_{p-1}=\frac{1}{2}(1+2p-2-2p+1)=0$,  $\mu_{p-1,n}=\lambda_{p-1,n}$, and hence
\[
\zeta_{\rm cex}(s,\tilde\Delta^{(p-1)}+\alpha_{p-1}^2)=\zeta_{\rm cex}(s,\tilde\Delta^{(p-1)})=\sum_{n=1}^\infty m_{{\rm cex},p-1,n}\lambda_{p-1,n}^{-2s},
\]
and
\[
\hat\B_{p-1,0}'(0)=\frac{1}{2}\sum_{n=1}^\infty m_{{\rm cex},p-1,n}\log \lambda_{p-1,n}=-\frac{1}{4}\zeta_{\rm cex}'(0,\tilde\Delta^{(p-1)}). \\
\]

Thus,
\beq\label{t2.2}\begin{aligned}
{t_{1,{\rm reg}}^{(2p-1)}}'(0)=& -(-1)^{p-1} \frac{1}{2}\zeta_{\rm cex}(0,\tilde\Delta^{(p-1)})\log l
-\frac{1}{4}(-1)^{p-1}\zeta_{\rm cex}'(0,\tilde\Delta^{(p-1)}).
\end{aligned}
\eeq

\subsubsection{The contribution of the singular part}
\label{singconesing}

By  Lemma \ref{tec2b}, the functions $\Phi_{q,2j-1}$ and $\Phi_{q,2j-1, \pm}$ are regular at $s=0$ when $m$ is odd, and the  functions $\Phi_{q,2j}$ and $\Phi_{q,2j, \pm}$ are regular at $s=0$ when $m$ is even. By the location of the poles of $\zeta(s,U)$ given at the beginning of Section \ref{calc1}, it follows that the  formula for the singular part in Theorem \ref{t4} reduces to the single sums (recall $\kappa=2$ and the zeta of $U$ is regular at $s=0$)
\begin{align*}
Z_{q,{\rm sing}}(0)&=0,& \hat Z_{q,\pm,{\rm sing}}(0)&=0,\\
Z_{q,{\rm sing}}'(0)&=\frac{1}{2}\sum_{j=1}^m \Phi_{q,j}(0)\Ru_{s=j}\zeta(s,U),& \hat Z'_{q,\pm,{\rm sing}}(0)&=\frac{1}{2}\sum_{j=1}^m \hat\Phi_{q,j,\pm}(0)\Ru_{s=j}\zeta(s,U).
\end{align*}
where the $\Phi$ are defined in equation (\ref{vvv}). 
Thus, considering first $t_{0,{\rm sing}}^{(m)}$, by Lemma \ref{l000}, we have
\[
\begin{aligned}
{t_{0,{\rm sing}}^{(m)}}'(0)&= \frac{1}{2}\sum^{\left[\frac{m}{2}\right]-1}_{q=0} (-1)^q \left((Z'_{q,{\rm sing}}(0)-\hat Z'_{q,+,{\rm sing}}(0))
+(-1)^{m-1}(Z'_{q,{\rm sing}}(0)-\hat Z'_{q,-,{\rm sing}}(0))\right),
\end{aligned}
\]
and hence
\[
\begin{aligned}
{t_{0,{\rm sing}}^{(m)}}'(0)&= \frac{1}{4}\sum^{\left[\frac{m}{2}\right]-1}_{q=0} (-1)^q \sum_{j=1}^m \left((\Phi_{q,j}(0)-\hat \Phi_{q,j,+}(0))
+(-1)^{m-1}(\Phi_{q,j}(0)-\hat\Phi_{q,j,-}(0))\right)\Ru_{s=j}\zeta(s,U),
\end{aligned}
\]

Second, consider ${t_{1,{\rm sing}}^{(2p-1)}}'(0)$ as given in Lemma \ref{l000}. We have
\[
{t_{1,{\rm sing}}^{(2p-1)}}'(0)= (-1)^{p-1}\frac{1}{2}  \left(  Z'_{p-1,{\rm sing}}(0)-\hat Z'_{p-1,0,{\rm sing}}(0) \right),
\]
and hence
\[
{t_{1,{\rm sing}}^{(2p-1)}}'(0)= (-1)^{p-1}\frac{1}{4}  \sum_{j=1}^m \left(  \Phi_{p-1,j}(0)-\hat \Phi_{p-1,j,0}(0) \right)\Ru_{s=j}\zeta(s,U).
\]

\subsubsection{The contribution of the harmonics}\label{har11}  The contribution of the harmonics can be computed using the formula in equation (\ref{p00}) at the end of Section \ref{s3}. By the definition in Lemma \ref{l000},
since
\[
z_{q,\pm}(s)=z(s,\pm\alpha_q,0,1),
\]
we have
\begin{align*}
t_2^{(m)}(s)&= \frac{l^{2s}}{2} \sum_{q=0}^{\left[\frac{m-1}{2}\right]} (-1)^{q+1}  m_{{\rm har},q}\left( z(s,-\alpha_{q-1},0,1)+(-1)^m z(s,-\alpha_{q},0,1)\right);
\end{align*}
by equations (\ref{p00}) and (\ref{p000}), 
\begin{align}
\nonumber{t_2^{(m)}}'(0)=& \sum_{q=0}^{\left[\frac{m-1}{2}\right]} (-1)^{q+1}  m_{{\rm har},q}\left( z(0,-\alpha_{q-1},0,1)+(-1)^m z(0,-\alpha_{q},0,1)\right)\log l\\
\nonumber&+\frac{1}{2} \sum_{q=0}^{\left[\frac{m-1}{2}\right]} (-1)^{q+1}  m_{{\rm har},q}\left( z'(0,-\alpha_{q-1},0,1)+(-1)^m z'(0,-\alpha_{q},0,1)\right)\\
\label{t1.1}=& -\frac{1}{2}\sum_{q=0}^{\left[\frac{m-1}{2}\right]} (-1)^{q+1}  m_{{\rm har},q}\left(\left(-\alpha_{q-1}+\frac{1}{2}\right) +(-1)^m \left(-\alpha_{q}+\frac{1}{2}\right)\right)\log l\\
\nonumber&+\frac{1}{2} \sum_{q=0}^{\left[\frac{m-1}{2}\right]} (-1)^{q+1}  m_{{\rm har},q}\left(\log \frac{2^{-\alpha_{q-1}-\frac{1}{2}}\Gamma(-\alpha_{q-1}+1)}{\sqrt{\pi}}+(-1)^m \log \frac{2^{-\alpha_{q}-\frac{1}{2}}\Gamma(-\alpha_{q}+1)}{\sqrt{\pi}}\right).
\end{align}

On the other side, 
\begin{align*}
t_3^{(2p)}(s)&= (-1)^{p+1} \frac{l^{2s}}{4} m_{{\rm har},p}\left(z(s,\alpha_p,0,1)+z(s,-\alpha_p,0,1)\right).
\end{align*}

Whence
\begin{align*}
{t_3^{(2p)}}'(0)=& (-1)^{p+1}  m_{{\rm har},p}\frac{1}{2}\left(z(0,\alpha_p,0,1)+z(0,-\alpha_p,0,1)\right)\log l\\
&+(-1)^{p+1} m_{{\rm har},p} \frac{1}{4}\left(z'(s,\alpha_p,0,1)+z'(s,-\alpha_p,0,1)\right)\\
=& -(-1)^{p+1} \frac{1}{4} m_{{\rm har},p}\left(\left(\alpha_{p}+\frac{1}{2}\right)
+ \left(-\alpha_{p}+\frac{1}{2}\right)\right)\log l\\
&+(-1)^{p+1} \frac{1}{4} m_{{\rm har},p}\left(\log \frac{2^{\alpha_{p}-\frac{1}{2}}\Gamma(\alpha_{p}+1)}{\sqrt{\pi}}+\log \frac{2^{-\alpha_{p}-\frac{1}{2}}\Gamma(-\alpha_{p}+1)}{\sqrt{\pi}}\right)\\
=&(-1)^{p} \frac{1}{4}m_{{\rm har},p}\log l+(-1)^{p+1} \frac{1}{4} m_{{\rm har},p}\log\frac{\alpha_p}{2\sin (\pi\alpha_p)}.
\end{align*}

Since $\alpha_p=\frac{1}{2}(1+2p-2p)=\frac{1}{2}$, this gives
\begin{align}
\label{t1.2}{t_3^{(2p)}}'(0)=&(-1)^{p} \frac{1}{4}m_{{\rm har},p}\log l+(-1)^{p} \frac{1}{2} m_{{\rm har},p}\log 2.
\end{align}

\subsection{The frustum with mixed BC} We omit the details, since the calculations are similar to the one performed for the cone. As above, we split into three parts.

\subsubsection{ The contribution of the regular part.} According to Theorem \ref{t4}. we have
\[
\hat D'_{q,\pm,{\rm reg}}(0;l_1,l_2)=-\hat A_{q,\pm}(0;l_1,l_2)-\hat B'_{q,\pm}(0;l_1,l_2),
\]
where, according to equation (\ref{loomix}) 
\begin{align*}
\hat A_{\pm}(s;l_1,l_2))&=\sum_{n=1}^\infty m_{q,{\rm cex}}\left(- \frac{1}{2}\log\frac{l_2}{l_1}+ \log \left(\left(\frac{l_2^{\mu_{q,n}}}{l_1^{\mu_{q,n}}}+\frac{l_1^{\mu_{q,n}}}{l_2^{\mu_{q,n}}}\right)\pm\frac{\alpha_q}{\mu_{q,n}}\left(\frac{l_2^{\mu_{q,n}}}{l_1^{\mu_{q,n}}} - \frac{l_1^{\mu_{q,n}}}{l_2^{\mu_{q,n}}}\right)\right)\right)\mu_{q,n}^{-2s}, \\
\hat B_{\pm}(s;l_1,l_2))&=0,
\end{align*}
and hence
\begin{align*}
\hat A_{\pm}(0;l_1,l_2))&=\sum_{n=1}^\infty m_{q,{\rm cex}}\left(- \frac{1}{2}\log\frac{l_2}{l_1}+ \log \left(\left(\frac{l_2^{\mu_{q,n}}}{l_1^{\mu_{q,n}}}+\frac{l_1^{\mu_{q,n}}}{l_2^{\mu_{q,n}}}\right)\pm\frac{\alpha_q}{\mu_{q,n}}\left(\frac{l_2^{\mu_{q,n}}}{l_1^{\mu_{q,n}}} - \frac{l_1^{\mu_{q,n}}}{l_2^{\mu_{q,n}}}\right)\right)\right), \\
\hat B_{\pm}(0;l_1,l_2))&=0,
\end{align*}

Now, note that
\[
\hat A_{-}(0;l_2,l_1))-\hat A_{+}(0;l_1,l_2))=\sum_{n=1}^\infty m_{q,{\rm cex}}\log\frac{l_2}{l_1}=\zeta_{\rm cex}(0,\tilde \Delta^{(q)})\log\frac{l_2}{l_1},
\]
and hence, by Lemma \ref{fit},
\beq
\label{w0}
\begin{aligned}
{w_{0,{\rm reg}}^{(m)}}'(0)&=\frac{1}{2}\sum^{\left[\frac{m}{2}\right]-1}_{q=0} (-1)^q \left((\hat D'_{q,-,{\rm reg}}(0;l_2,l_1)-\hat D'_{q,+,{\rm reg}}(0;l_1,l_2))\right.\\
&\left.+(-1)^{m-1}(\hat D'_{q,+,{\rm reg}}(0;l_2,l_1)-\hat D'_{q,-,{\rm reg}}(0;l_1,l_2))\right)\\
&=-\frac{1}{2}\sum^{\left[\frac{m}{2}\right]-1}_{q=0} (-1)^q \left((\hat A_{q,-}(0;l_2,l_1)-\hat A_{q,+}(0;l_1,l_2))\right.\\
&\left.+(-1)^{m-1}(\hat A_{q,+}(0;l_2,l_1)-\hat A_{q,-}(0;l_1,l_2))\right)\\
&=- \frac{1}{2}\left(\log\frac{l_2}{l_1}\right)\sum^{\left[\frac{m}{2}\right]-1}_{q=0} (-1)^q (\zeta_{\rm cex}(0,\tilde \Delta^{(q)})+(-1)^{m+1}\zeta_{\rm cex}(0,\tilde \Delta^{(q)})),\\
\end{aligned}
\eeq
and
\beq\label{w1}
\begin{aligned}
{w_{1,{\rm reg}}^{(2p-1)}}'(0)&= (-1)^{p-1}\frac{1}{2}  \left(  \hat D'_{p-1,0,{\rm reg}}(0;l_2,l_1)-\hat D'_{p-1,0,{\rm reg}}(0;l_1,l_2) \right)\\
&= (-1)^{p}\frac{1}{2}  \log\frac{l_2}{l_1} \zeta_{\rm cex}(0,\tilde \Delta^{(p-1)}).
\end{aligned}
\eeq

\subsubsection{The contribution of the singular part.} Since the functions  $\Phi_{q,2j-1}$ and $\Phi_{q,2j-1, \pm}$ are regular at $s=0$, and by the location of the poles of $\zeta(s,U)$ given at the beginning of Section \ref{calc1}, it follows that the  formula for the singular part in Theorem \ref{t4} reduces to the single sums (recall $\kappa=2$ and the zeta of $U$ is regular at $s=0$)
\begin{align*}
\hat D_{q,\pm,{\rm sing}}(0;l_1,l_2)&=0,& \hat D'_{q,\pm,{\rm sing}}(0;l_1,l_2)&=\frac{1}{2}\sum_{j=1}^m \hat\Phi_{q,j,\pm}(0;l_1,l_2)\Ru_{s=j}\zeta(s,U).
\end{align*}
where the $\Phi$ are defined in equation (\ref{phiphi}). 
Thus, considering first $w_{0,{\rm sing}}^{(m)}$, by  Lemma \ref{fit}, we have
\[
\begin{aligned}
{w_{0,{\rm sing}}^{(m)}}'(0)=& \frac{1}{2}\sum^{\left[\frac{m}{2}\right]-1}_{q=0} (-1)^q \left((\hat D'_{q,-,{\rm sing}}(0;l_2,l_1)-\hat D'_{q,+,{\rm sing}}(0;l_1,l_2))\right.\\
&\left.+(-1)^{m-1}(\hat D'_{q,+,{\rm sing}}(0;l_2,l_1)-\hat D'_{q,-,{\rm sing}}(0;l_1,l_2))\right),
\end{aligned}
\]
and hence
\beq\label{wsin0}
\begin{aligned}
{w_{0,{\rm sing}}^{(m)}}'(0)=& \frac{1}{4}\sum^{\left[\frac{m}{2}\right]-1}_{q=0} (-1)^q \sum_{j=1}^m \left((\hat\Phi_{q,j,-}(0;l_2,l_1)-\hat \Phi_{q,j,+}(0;l_1,l_2))\right.\\
&\left.+(-1)^{m-1}(\hat \Phi_{q,j,+}(0;l_2,l_1)-\hat\Phi_{q,j,-}(0;l_1,l_2))\right)\Ru_{s=j}\zeta(s,U),
\end{aligned}
\eeq

Second, for ${w_{1,{\rm sing}}^{(2p-1)}}'(0)$, we have
\[
{w_{1,{\rm sing}}^{(2p-1)}}'(0)= (-1)^{p-1}\frac{1}{2}  \left(  \hat D'_{p-1,0,{\rm sing}}(0;l_2,l_1)-\hat D'_{p-1,0,{\rm sing}}(0;l_1,l_2) \right),
\]
and hence
\beq\label{wsin1}
{w_{1,{\rm sing}}^{(2p-1)}}'(0)= (-1)^{p-1}\frac{1}{4}  \sum_{j=1}^{2p-1} \left( \hat \Phi_{p-1,j,0}(0;l_2,l_1)-\hat \Phi_{p-1,j,0}(0;l_1,l_2) \right)\Ru_{s=j}\zeta(s,U).
\eeq


\subsubsection{ The contribution of the harmonics.} 
In order to determinate the contribution of the harmonics, we use the technique described in Section 2 of \cite{Spr9} to deal with (simple) sequences of spectral type (in fact this was already in \cite{Spr4}). Recall that, by Lemma \ref{fit}, we need $d'_{q}(0;l_1,l_2)$, where by definition 
\[
d_q(s;l_1,l_2)=m_{{\rm har},q}\sum_{k=1}^{\infty} f^{-2s}_{-\alpha_{q-1},k}(l_1,l_2),
\]
and the  $f^{-2s}_{\nu,k}(l_1,l_2)$ are the positive zeros of the function
\[
F_{\nu} (z;l_1,l_2)=  J_{\nu}(l_1 z) Y_{\nu-1}(l_2 z) - Y_{\nu}(l_1 z) J_{\nu-1}(l_2 z).
\]

Recalling the series definition of the Bessel functions, we obtain that near $z=0$,
\[
F_\nu(z) = \frac{2}{\pi z} \frac{l_2^{\nu-1}}{l_1^\nu}.
\]

This means that the function $F_\nu(z)$ is an even function of $z$.  By the Hadamard factorization Theorem, we have the product expansion
\[
F_\nu(z) = \frac{2}{\pi z} \frac{l_2^{\nu-1}}{l_1^\nu}{\prod_{k=-\infty}^{+\infty}}'\left(1-\frac{z}{f_{\nu,k}(l_1,l_2)}\right),
\]
and therefore
\[
F_\nu(z) = \frac{2}{\pi z} \frac{l_2^{\nu-1}}{l_1^\nu}{\prod_{k=1}^{\infty}}\left(1-\frac{z^2}{f^2_{\nu,k}(l_1,l_2)}\right),
\]

Defining, for $-\pi<\arg(z)<\frac{\pi}{2}$,  
\[  
G_\nu(z)=- F_\nu(i z), 
\]
we have
\begin{align*}
 G_\nu(z)&=\frac{2}{\pi z} \frac{l_2^{\nu-1}}{l_1^\nu}{\prod_{k=1}^{\infty}}\left(1+\frac{z^2}{f^2_{\nu,k}(l_1,l_2)}\right).
\end{align*}

By equation (\ref{gamma}), the logarithmic Gamma function associated to the simple sequence $\Theta_\nu(l_1,l_2)=\{f^2_{\nu,k}(l_1,l_2)\}$ is
\begin{align*}
\log \Gamma(-\lambda,\Theta_{\nu}(l_1,l_2))&=-\log\prod_{k=1}^\infty \left(1+\frac{(-\lambda)}{ f_{\nu,k}^2}\right)\\
&=-\log  G_{\nu}(\sqrt{-\lambda})  -\log \frac{\pi}{2}-  \log l_2\sqrt{\lambda}+\nu \log \frac{l_2}{l_1}.
\end{align*}

Combining the asymptotic expansions of the Bessel functions, we obtain for large $\lambda$:
\[
\log \Gamma(-\lambda,\Theta_{\nu}(l_1,l_2))=\left(\frac{1}{2}-\nu\right)\log \frac{l_1}{l_2}+\log 2-(l_2-l_1)\sqrt{\lambda}+O\left(\frac{1}{\sqrt{\lambda}}\right).
\]

Therefore, by Theorem 2.11 of \cite{Spr9}, 
\[
d'_q(0;l_1,l_2)=-m_{{\rm har},q}\left(\left(\frac{1}{2}+\alpha_{q-1}\right)\log \frac{l_1}{l_2}+\log 2\right),
\]
and hence, by  Lemma \ref{fit},
\beq\label{w2}
\begin{aligned}
{w_2^{(m)}}'(0)=& \frac{1}{2} \sum_{q=0}^{\left[\frac{m}{2}\right]} (-1)^{q+1}  \left(d'_{q}(0;l_1,l_2)+(-1)^m d_{q}(0;l_2,l_1)\right)\\
=& \frac{1}{2} \sum_{q=0}^{\left[\frac{m}{2}\right]} (-1)^{q}m_{{\rm  har}, q}  \left(\left(\frac{1}{2}+\alpha_{q-1}\right)\log \frac{l_1}{l_2}+\log 2\right.\\
&\left.+(-1)^m \left(\left(\frac{1}{2}+\alpha_{q-1}\right)\log \frac{l_2}{l_1}+\log 2\right)\right).
\end{aligned}
\eeq

\beq\label{w3}
\begin{aligned}
{w_3^{(2p)}}'(0)&=(-1)^{p+1} \frac{1}{2}d'_p(0;l_1,l_2)=(-1)^p\frac{1}{2} m_{{\rm  har}, p}\left(\left(\frac{1}{2}+\alpha_{p-1}\right)\log \frac{l_1}{l_2}+\log 2\right)\\
&=(-1)^p\frac{1}{2} m_{{\rm  har}, p}\log 2.
\end{aligned}
\eeq

\subsection{The furstum with absolute BC}

\subsubsection{The contribution of the regular part}

Proceeding as in Section \ref{calc2}, by Theorem \ref{t4}, we have
\begin{align*}
E_{q,{\rm  reg}}'(0)&=-A_q(0)-B'_q(0), &
\hat E_{q,\pm,{\rm reg}}'(0)&=-\hat A_{q,\pm}(0)-\hat B'_{q,\pm}(0),
\end{align*}
where, 
\begin{align*}
A_q(s)&=\sum_{n=1}^\infty m_{{\rm cex},q,n}\left(\log \left( \frac{l_2^{\mu_{q,n}}}{l_1^{\mu_{q,n}}}-\frac{l_1^{\mu_{q,n}}}{l_2^{\mu_{q,n}}}\right)+\frac{1}{2}\log l_1 l_2\right) \mu_{q,n}^{-2s},\\
B_q(s)&=\frac{1}{2}\sum_{n=1}^\infty m_{{\rm cex},q,n}\mu_{q,n}^{-2s}, \\
B_q'(s)&=-\sum_{n=1}^\infty m_{{\rm cex},q,n}\mu_{q,n}^{-2s}\log \mu_{q,n}, \\
\hat A_{q,\pm}(s)&=\sum_{n=1}^\infty m_{{\rm cex},q,n} \left(-\frac{1}{2}\log l_1l_2+\log\left(\frac{l_2^{\mu_{q,n}}}{l_1^{\mu_{q,n}}}-\frac{l_1^{\mu_{q,n}}}{l_2^{\mu_{q,n}}}\right)+\log \left(1-\frac{\alpha_q^2}{\mu^2_{q,n}}\right)\right)\mu_{q,n}^{-2s},\\
\hat B_{q,\pm}(s)&=-\frac{1}{2}\sum_{n=1}^\infty m_{{\rm cex},q,n}\mu_{q,n}^{-2s},\\
\hat B_{q,\pm}'(s)&=\sum_{n=1}^\infty m_{{\rm cex},q,n}\mu_{q,n}^{-2s}\log\mu_{q,n}.
\end{align*}

This gives (in the following formulas we are taking the finite part)
\begin{align*}
A_q(0)&=\sum_{n=1}^\infty m_{{\rm cex},q,n}\left(\log \left( \frac{l_2^{\mu_{q,n}}}{l_1^{\mu_{q,n}}}-\frac{l_1^{\mu_{q,n}}}{l_2^{\mu_{q,n}}}\right)+\frac{1}{2}\log l_1 l_2\right) ,\\
B_q(0)&=\frac{1}{2}\sum_{n=1}^\infty m_{{\rm cex},q,n}, \\
B_q'(0)&=-\sum_{n=1}^\infty m_{{\rm cex},q,n}\log \mu_{q,n}, \\
\hat A_{q,\pm}(0)&=\sum_{n=1}^\infty m_{{\rm cex},q,n} \left(-\frac{1}{2}\log l_1l_2+\log\left(\frac{l_2^{\mu_{q,n}}}{l_1^{\mu_{q,n}}}-\frac{l_1^{\mu_{q,n}}}{l_2^{\mu_{q,n}}}\right)+\log \left(1-\frac{\alpha_q^2}{\mu^2_{q,n}}\right)\right),\\
\hat B_{q,\pm}(0)&=-\frac{1}{2}\sum_{n=1}^\infty m_{{\rm cex},q,n},\\
\hat B_{q,\pm}'(0)&=\sum_{n=1}^\infty m_{{\rm cex},q,n}\log\mu_{q,n}.
\end{align*}

Thus, considering first $y_{0,{\rm reg}}^{(m)}$, we have
\[
\begin{aligned}
{y_{0,{\rm reg}}^{(m)}}'(0)=&\frac{1}{2}\sum^{\left[\frac{m}{2}\right]-1}_{q=0} (-1)^q \left((E'_{q,{\rm reg}}(0)-\hat E'_{q,+,{\rm reg}}(0))
+(-1)^{m-1}(E'_{q,{\rm reg}}(0)-\hat E'_{q,-,{\rm reg}}(0))\right).
\end{aligned}
\]
and hence
\begin{align*}
{y_{0,{\rm reg}}^{(m)}}'(0)=&-\frac{1}{2}\sum^{\left[\frac{m}{2}\right]-1}_{q=0} (-1)^q \left(A_{q}(0)+B_q'(0)-\hat A_{q,+}(0)-\hat B_{q,+}'(0)\right)\\
&-(-1)^{m-1}\frac{1}{2}\sum^{\left[\frac{m}{2}\right]-1}_{q=0} (-1)^q\left(A_{q}(0)+B_q'(0)-\hat A_{q,-}(0)-\hat B_{q,-}'(0))\right).
\end{align*}

We observe that all the terms that are equal in $B_q(0)$ and $\hat B_{q,\pm}(0)$ and in $A_q(0)$ and $\hat A_{q,\pm}(0)$ respectively cancel in the final formula for $t_{0,{\rm reg}}^{(m)}(0)$. This is because the final formulas are 
\begin{align*}
(E'_{q,{\rm reg}}(0)-\hat E_{q,+,{\rm reg}}(0))+(E'_{q,{\rm reg}}(0)-\hat E_{q,-,{\rm reg}}(0))&=2E'_{q,{\rm reg}}(0)-\hat E_{q,+,{\rm reg}}(0)-\hat E_{q,-,{\rm reg}}(0),
\end{align*}
if $m=2p-1$ is odd, and
\begin{align*}
(E'_{q,{\rm reg}}(0)-\hat E_{q,+,{\rm reg}}(0))-(E'_{q,{\rm reg}}(0)-\hat E'_{q,-,{\rm reg}}(0))&=-\hat E'_{q,+,{\rm reg}}(0)+\hat E'_{q,-,{\rm reg}}(0),
\end{align*}
if $m=2p$ is even. 
Therefore, we can rewrite
\begin{align*}
{y_{0,{\rm reg}}^{(m)}}'(0)=&-\frac{1}{2}\sum^{\left[\frac{m}{2}\right]-1}_{q=0} (-1)^q \left(\A_{q}(0)+\B_q'(0)-\hat \A_{q,+}(0)-\hat \B_{q,+}'(0)\right)\\
&-(-1)^{m-1}\frac{1}{2}\sum^{\left[\frac{m}{2}\right]-1}_{q=0} (-1)^q\left(\A_{q}(0)+\B_q'(0)-\hat \A_{q,-}(0)-\hat \B_{q,-}'(0))\right).
\end{align*}
where
\begin{align*}
\A_q(0)&=\frac{1}{2}\log l_1 l_2 \sum_{n=1}^\infty m_{{\rm cex},q,n} = \frac{1}{2}\zeta(0,\tilde\Delta^{(q)})\log l_1 l_2 ,\\
\B_q'(0)&=-\sum_{n=1}^\infty m_{{\rm cex},q,n}\log \mu_{q,n}, \\
\hat \A_{q,\pm}(0)&=\sum_{n=1}^\infty m_{{\rm cex},q,n}\left(-\frac{1}{2}\log l_1l_2 + \log\left(1-\frac{\alpha^2_q}{\mu^2_{q,n}}\right)\right)\\
&=-\frac{1}{2}\zeta(0,\tilde\Delta^{(q)})\log l_1 l_2  + \sum_{n=1}^\infty m_{{\rm cex},q,n}\log\left(1-\frac{\alpha^2_q}{\mu^2_{q,n}}\right),\\
\hat \B_{q,\pm}'(0)&=\sum_{n=1}^\infty m_{{\rm cex},q,n}\log\mu_{q,n},
\end{align*}
and 
\begin{align*}
\sum_{n=1}^\infty m_{{\rm cex},q,n}&=\zeta_{\rm cex}(0,\tilde\Delta^{(q)}+\alpha_q^2),&
-2\sum_{n=1}^\infty m_{{\rm cex},q,n}\log \mu_{q,n}&=\zeta_{\rm cex}'(0,\tilde\Delta^{(q)}+\alpha_q^2).
\end{align*}

Thus,
\beq\label{ly2}\begin{aligned}
{y_{0,{\rm reg}}^{(m)}}'(0)=&-\frac{1}{2} \sum^{\left[\frac{m}{2}\right]-1}_{q=0} (-1)^q\left(\zeta(0,\tilde\Delta^{(q)})+
(-1)^{m-1}\zeta(0,\tilde\Delta^{(q)})\right) \log l_1 l_2\\
&+\frac{1}{2}\sum^{\left[\frac{m}{2}\right]-1}_{q=0} (-1)^q \left(\sum_{n=1}^\infty m_{{\rm cex},q,n}\log\left(-\frac{\alpha_q^2}{\mu_{q,n}^2}\right)  + 2\sum_{n=1}^\infty m_{{\rm cex},q,n}\log\mu_{q,n}\right)\\
&+(-1)^{m-1}\frac{1}{2}\sum^{\left[\frac{m}{2}\right]-1}_{q=0} (-1)^q\left(\sum_{n=1}^\infty m_{{\rm cex},q,n}\log\left(1-\frac{\alpha_q^2}{\mu_{q,n}^2}\right)  + 2\sum_{n=1}^\infty m_{{\rm cex},q,n}\log\mu_{q,n}\right).
\end{aligned}
\eeq

Next, consider $y_{1,{\rm reg}}^{(2p-1)}$. By Lemma \ref{l000}, we have
\[
{y_{1,{\rm reg}}^{(2p-1)}}'(0)= - \frac{1}{2} \left(A_{p-1}(0)+B'_{p-1}(0)-\hat A_{p-1,0}(0)-\hat B'_{p-1,0}(0)\right).
\]

As above,  all the terms that are equal in $B_q(0)$ and $\hat B_{q,0}(0)$ and in $A_q(0)$ and $\hat A_{q,0}(0)$ respectively cancel in the final formula for $t_{1,{\rm reg}}^{(2p-1)}(0)$. Therefore, we can write
\begin{align*}
{y_{1,{\rm reg}}^{(2p-1)}}'(0)=& - \frac{1}{2} \left(\A_{p-1}(0)+\B'_{p-1}(0)-\hat \A_{p-1,0}(0)-\hat \B'_{p-1,0}(0)\right),
\end{align*}
where
\begin{align*}
\A_{p-1}(0)&= \frac{1}{2}\zeta(0,\tilde\Delta^{(p-1)})\log l_1 l_2 , &
\B_{p-1}'(0)&=-\sum_{n=1}^\infty m_{{\rm cex},q,n}\log \mu_{q,n}, \\
\hat \A_{q,\pm}(0)&=-\frac{1}{2}\log l_1 l_2 \zeta(0,\tilde\Delta^{(q)}) ,&
\hat \B_{q,\pm}'(0)&=\sum_{n=1}^\infty m_{{\rm cex},q,n}\log\mu_{q,n}.
\end{align*}

Observing  that $\alpha_{p-1}=\frac{1}{2}(1+2p-2-2p+1)=0$,  and $\mu_{p-1,n}=\lambda_{p-1,n}$, we have
\[
\zeta_{\rm cex}(s,\tilde\Delta^{(p-1)}+\alpha_{p-1}^2)=\zeta_{\rm cex}(s,\tilde\Delta^{(p-1)})=\sum_{n=1}^\infty m_{{\rm cex},p-1,n}\lambda_{p-1,n}^{-2s},
\]
and
\[
\hat\B_{p-1,0}'(0)=\sum_{n=1}^\infty m_{{\rm cex},p-1,n}\log \lambda_{p-1,n}=-\frac{1}{2}\zeta_{\rm cex}'(0,\tilde\Delta^{(p-1)}). \\
\]

Thus,
\beq\label{ly3}\begin{aligned}
{y_{1,{\rm reg}}^{(2p-1)}}'(0)=& -(-1)^{p-1} \frac{1}{2}\zeta_{\rm cex}(0,\tilde\Delta^{(p-1)})\log l_1 l_2
-\frac{1}{2}(-1)^{p-1}\zeta_{\rm cex}'(0,\tilde\Delta^{(p-1)}).
\end{aligned}
\eeq

\subsubsection{The contribution of the harmonics}  The contribution of the harmonics can be computed directly, proceeding as in Section \ref{har11}. We obtain (this formula holds for all cases in which $\alpha_q\not=0$, the particular case only appears when $m$ is odd and is described below)
\[
e'_{q,-}(0)= \log(-\alpha_q) - \log \left(\frac{l_2^{-\alpha_q}}{l_1^{-\alpha_q}}-\frac{l_1^{-\alpha_q}}{l_2^{-\alpha_q}}\right) - \frac{1}{2}\log l_1 l_2,
\]
and hence
\begin{align*}
\nonumber{y_2^{(m)}}'(0)=& \frac{1}{2} \sum_{q=0}^{\left[\frac{m-1}{2}\right]} (-1)^{q+1}  m_{{\rm har},q}\left( e'_{q-1,-}(0)+(-1)^m e'_{q,-}(0)\right)\\
\nonumber=&\frac{1}{2} \sum_{q=0}^{\left[\frac{m-1}{2}\right]} (-1)^{q+1}  m_{{\rm har},q}\log\left(\frac{m-2q+1}{2} \right)\left(\frac{m-2q-1}{2}\right)^{(-1)^m} \\
&+\frac{1}{2} \sum_{q=0}^{\left[\frac{m-1}{2}\right]} (-1)^{q}  m_{{\rm har},q}\log\left(\frac{l_2^{m-2q+1}-l_1^{m-2q+1}}{(l_1 l_2)^{\frac{m-2q+1}{2}}}  \right)\left(\frac{l_2^{m-2q-1}-l_1^{m-2q-1}}{(l_1 l_2)^{\frac{m-2q-1}{2}}}\right)^{(-1)^m} \\
\nonumber&+\frac{1}{2} \sum_{q=0}^{\left[\frac{m-1}{2}\right]} (-1)^{q}m_{{\rm har},q} \frac{1+(-1)^m}{2} \log l_1 l_2.
\end{align*}

It is convenient to distinguish odd and even cases, as in Section \ref{s8.1}. When, $m=2p-1$, we need to isolate the case $q=p-1$, when the correct formula is
\[
e'_{p-1,-}(0) = -\log\log\frac{l_2}{l_1} - \log 2 - \log \sqrt{l_1 l_2}.
\]

After some calculations, we obtain
\begin{align*}
\nonumber{y_2^{(2p-1)}}'(0)=& \frac{1}{2} \sum_{q=0}^{p-1} (-1)^{q}  m_{{\rm
har},q}  \log \frac{l_2^{2p-2q}-l_1^{2p-2q}}{2p-2q}-\frac{1}{2} \sum_{q=0}^{p-2} (-1)^{q}m_{{\rm
har},q}\log \frac{l_2^{2p-2q-2}-l_1^{2p-2q-2}}{2p-2q-2} \\
&+\frac{1}{2} \sum_{q=0}^{p-1} (-1)^{q+1}m_{{\rm har},q}\log l_1l_2 +\frac{1}{2}(-1)^p m_{{\rm
har},p}\log\log \frac{l_2}{l_1}.
\end{align*}

By duality on the section: 
\[
\sum_{q=0}^{p-2} (-1)^{q+1} r_{q} \log \frac{l_2^{2p-2-2q}- l_1^{2p-2-2q}}{2p-2-2q } = \sum_{q=p+1}^{2p-1} (-1)^{q}  m_{{\rm har},q}\log\frac{l_2^{2p-2q}-l_1^{2p-2q}}{2p-2q} - \sum_{q=0}^{p-2} (-1)^q r_q \log (l_1 l_2)^{2p-2-2q},
\]
and hence
\beq\label{h1}
\begin{aligned}
{y_2^{(2p-1)}}'(0)=& \frac{1}{2} \sum_{q=0,q\not= p}^{2p-1} (-1)^{q+1} 
m_{{\rm har},q} \log\frac{l_2^{2p-2q}-l_1^{2p-2q}}{2p-2q}\\
+&\frac{1}{2}\sum_{q=0}^{p-1} (-1)^{q+1}(2p-1-2q)m_{{\rm har},q} \log l_1 l_2 + \frac{1}{2}(-1)^p m_{{\rm har},p}\log\log \frac{l_2}{l_1}.
\end{aligned}
\eeq

When $m=2p$, we obtain
\begin{align*}
\nonumber{y_2^{(2p)}}'(0)
=&\frac{1}{2} \sum_{q=0}^{p-1} (-1)^{q+1}  m_{{\rm har},q}\left(\log\left(2p-2q+1\right) +\log\left(2p-2q-1\right) -2\log2\right)
\\
&+\frac{1}{2} \sum_{q=0}^{p-1} (-1)^{q}  \left(m_{{\rm har},q}\log\left(l_2^{2p-2q+1}-l_1^{2p-2q+1}\right)+ \log\left(l_2^{2p-2q-1}-l_1^{2p-2q-1}\right) - (2p-2q)\log l_1l_2\right)\\
&+\frac{1}{2} \sum_{q=0}^{p-1} (-1)^{q}m_{{\rm har},q} \log l_1 l_2.
\end{align*}

The last contribution is
\begin{align*}
{y_3^{(2p)}}'(0)&= (-1)^{p+1} \frac{1}{2} m_{{\rm har},p} e'_{p,-}(0)\\
&= (-1)^{p+1} \frac{1}{2} m_{{\rm har},p}\left(- \log 2 - \log \left(\frac{l_2^{\frac{1}{2}}}{l_1^{\frac{1}{2}}}-\frac{l_1^{\frac{1}{2}}}{l_2^{\frac{1}{2}}}\right) - \frac{1}{2}\log l_1 l_2
 \right)\\
 &=(-1)^{p}\frac{1}{2}m_{{\rm har},p}\left(\log (l_2 - l_1) + \log 2\right).
\end{align*}


Collecting
\begin{align*}
{y_2^{(2p)}}'(0)+{y_3^{(2p)}}'(0)=& \sum_{q=0}^{p-1} (-1)^{q}  \log2 + \frac{1}{2}(-1)^{p}m_{{\rm har},p}\log 2
\\
&+\frac{1}{2} \sum_{q=0}^{p} (-1)^{q}  m_{{\rm har},q}\log\frac{l_2^{2p-2q+1}-l_1^{2p-2q+1}}{2p-2q+1}+\frac{1}{2} \sum_{q=0}^{p-1} (-1)^q  m_{{\rm har},q}\log\frac{l_2^{2p-2q-1}-l_1^{2p-2q-1}}{2p-2q-1} \\
&+\frac{1}{2}\log l_1 l_2 \sum_{q=0}^{p-1} (-1)^{q+1}  m_{{\rm har},q}(2p-2q-1).
\end{align*} 

Using duality on the section
\begin{align*}
\sum_{q=0}^{p-1} (-1)^q  m_{{\rm har},q}\log\frac{l_2^{2p-2q-1}-l_1^{2p-2q-1}}{2p-2q-1}
=&\sum_{q=p+1}^{2p} (-1)^q r_{q} \log \frac{-1}{2p-2q+1}\left(\frac{1}{l_2^{2p-2q+1}} -\frac{1}{l_1^{2p-2q+1}}\right)\\
=&\sum_{q=p+1}^{2p} (-1)^q r_{q} \log \frac{l_2^{2p-2q+1} - l_1^{2p-2q+1}}{2p-2q+1}\\
&+ \sum_{q=0}^{p-1} (-1)^{q+1} r_q (2p-2q-1)\log l_1l_2,
\end{align*} 
and hence
\beq\label{h2}
{y_2^{(2p)}}'(0)+{y_3^{(2p)}}'(0)=\frac{1}{2} \chi(W)\log 2+ \frac{1}{2} \sum_{q=0}^{2p} (-1)^{q}  m_{{\rm har},q}\log\frac{l_2^{2p-2q+1}-l_1^{2p-2q+1}}{2p-2q+1}.
\eeq

\subsubsection{The contribution of the singular part} The calculations show that the
functions $\Phi$ appearing in the singular part are the same appearing in the
singular part for the cone and the frustum with mixed BC, as described in Sections
\ref{calc1} and \ref{calc2}. We obtain the following result
\begin{align*}
E_{q,{\rm sing}}(0)&=0,&E_{q,{\rm sing}}'(0)&=\frac{1}{2}\sum_{j=1}^m 
\Phi_{q,j}^{\rm Frustum, abs}(0)\Ru_{s=j}\zeta(s,U),\\
\hat E_{q,\pm,{\rm sing}}(0)&=0,&\hat E_{q,\pm,{\rm
sing}}'(0)&=\frac{1}{2}\sum_{j=1}^m  \hat \Phi_{q,j,\pm}^{\rm Frustum,
abs}(0)\Ru_{s=j}\zeta(s,U).
\end{align*}

Since, 
\[
\begin{aligned}
{y_{0,{\rm sing}}^{(m)}}'(0)=& \frac{1}{2}\sum^{\left[\frac{m}{2}\right]-1}_{q=0}
(-1)^q \left(( E'_{q,{\rm sing}}(0)-\hat E'_{q,+,{\rm sing}}(0))\right.\left.+(-1)^{m-1}( E'_{q,{\rm sing}}(0)-\hat E'_{q,-,{\rm sing}}(0))\right),
\end{aligned}
\]
we obtain
\beq\label{ysin0}
\begin{aligned}
{y_{0,{\rm sing}}^{(m)}}'(0)=& \frac{1}{4}\sum^{\left[\frac{m}{2}\right]-1}_{q=0}
(-1)^q \sum_{j=1}^m \left((\Phi^{\rm Frustum, abs}_{q,j}(0)-\hat \Phi^{\rm Frustum,
abs}_{q,j,+}(0))\right.\\
&\left.+(-1)^{m-1}( \Phi^{\rm Frustum, abs}_{q,j}(0)-\hat\Phi^{\rm Frustum,
abs}_{q,j,-}(0))\right)\Ru_{s=j}\zeta(s,U),
\end{aligned}
\eeq

If $m=2p-1$:
\begin{align*}
{y_{0,{\rm sing}}^{(2p-1)}}'(0)=& \frac{1}{4}\sum^{p-2}_{q=0}
(-1)^q \sum_{j=1}^{2p-1} \left(2\Phi^{\rm Frustum, abs}_{q,j}(0)-\hat \Phi^{\rm Frustum,abs}_{q,j,+}(0))-\hat\Phi^{\rm Frustum,
abs}_{q,j,-}(0)\right)\Ru_{s=j}\zeta(s,U),
\end{align*}
if $m=2p$:
\begin{align*}
{y_{0,{\rm sing}}^{(2p)}}'(0)=&-  \frac{1}{4}\sum^{p-1}_{q=0}
(-1)^q \sum_{j=1}^{2p} \left(\hat \Phi^{\rm Frustum,abs}_{q,j,+}(0))-\hat\Phi^{\rm Frustum,
abs}_{q,j,-}(0))\right)\Ru_{s=j}\zeta(s,U),
\end{align*}

For ${y_{1,{\rm sing}}^{(2p-1)}}'(0)$, we have
\[
{y_{1,{\rm sing}}^{(2p-1)}}'(0)= (-1)^{p-1}\frac{1}{2}  \left(  E'_{p-1,{\rm
sing}}(0)-\hat E'_{p-1,0,{\rm sing}}(0) \right),
\]
and hence
\beq\label{ysin1}
{y_{1,{\rm sing}}^{(2p-1)}}'(0)= (-1)^{p-1}\frac{1}{4}  \sum_{j=1}^m \left( 
\Phi^{\rm Frustum, abs}_{p-1,j}(0)-\hat \Phi^{\rm Frustum, abs}_{p-1,j,0}(0)
\right)\Ru_{s=j}\zeta(s,U).
\eeq

In particular, we have a result analogous to Lemma \ref{tec1}, that leads to the
following formula
\begin{align*}
&\Phi^{\rm Frustum, abs}_{q,j}(s)-\hat \Phi^{\rm Frustum, abs}_{q,j,+}(s)
+(-1)^{m-1}( \Phi^{\rm Frustum, abs}_{q,j}(s)-\hat\Phi^{\rm Frustum,
abs}_{q,j,-}(s)) =\\
&=\left(l_2^{2s}+(-1)^j l_1^{2s}\right)
\left(\Phi_{q,j}(s)-\hat\Phi_{q,j,+}(s)+(-1)^{m-1}\left(\Phi_{q,j}(s)-\hat\Phi_{q,j,-}(s)\right)\right),
\end{align*}
and therefore, if $m=2p-1$,
\begin{align*}
2\Phi^{\rm Frustum, abs}_{q,j}(0)-\hat \Phi^{\rm Frustum, abs}_{q,j,+}(0)
-\hat\Phi^{\rm Frustum,abs}_{q,j,-}(0)=\left(1+(-1)^j \right)
\left(2\Phi_{q,j}(0)-\hat\Phi_{q,j,+}(0)-\hat\Phi_{q,j,-}(0)\right),
\end{align*}
if $m=2p$,
\begin{align*}
\hat \Phi^{\rm Frustum, abs}_{q,j,+}(0)
-\hat\Phi^{\rm Frustum,abs}_{q,j,-}(0)) =\left(1+(-1)^j \right)
\left(\hat\Phi_{q,j,+}(0)-\hat\Phi_{q,j,-}(0)\right).
\end{align*}

Since in the odd case the relevant terms are those with odd index $j$, we have that
\[
{y_{0,{\rm sing}}^{(2p-1)}}'(0)+{y_{1,{\rm sing}}^{(2p-1)}}'(0)=0.
\]

In the even case, we obtain
\[
{y_{0,{\rm sing}}^{(2p)}}'(0)=2{t_{0,{\rm sing}}^{(2p-1)}}'(0).
\]

\section{The regular part of the torsion}
\label{regularpart}

We decompose the torsion in two parts,  the regular part and the anomaly boundary term, according to the formulas (\ref{www}), (\ref{www1}), and (\ref{ww}) of Section \ref{s4}. In this section we give the formulas for the regular part, for the cone and for the frustum. As a consequence, we have the proof of the formulas in equations (\ref{www}), and (\ref{www1}) namely that, in the case of the frustum,  the regular part of the torsion coincides with the Reidemeister torsion plus the Euler part of the boundary contribution, and the singular part is precisely the anomaly boundary contribution.

\subsection{The regular part of the torsion for the cone}\label{s8.1} We distinguish odd and even cases.

\subsubsection{Odd case: $m=2p-1$} $\left[\frac{m}{2}\right]-1=p-2$, and hence, using equation (\ref{t2.1}),
\begin{align*}
{t_{0,{\rm reg}}^{(2p-1)}}'(0)
=&-\sum^{p-2}_{q=0} (-1)^q \zeta_{\rm cex}(0,\tilde\Delta^{(q)})\log l\\
&+\frac{1}{2}\sum^{p-2}_{q=0} (-1)^q \left(\sum_{n=1}^\infty m_{{\rm cex},q,n}\log\left(1-\frac{\alpha_q^2}{\mu_{q,n}^2}\right)+\sum_{n=1}^\infty m_{{\rm cex},q,n}\log \mu_{q,n}^2\right).
\end{align*}

Since $\mu_{q,n}^2-\alpha_q^2=\lambda_{q,n}^2$,  we obtain
\begin{align*}
{t_{0,{\rm reg}}^{(2p-1)}}'(0)
=&-\sum^{p-2}_{q=0} (-1)^q \zeta_{\rm cex}(0,\tilde\Delta^{(q)})\log l+\frac{1}{2}\sum^{p-2}_{q=0} (-1)^q \left(\sum_{n=1}^\infty m_{{\rm cex},q,n}\log \lambda_{q,n}\right)\\
=&-\sum^{p-2}_{q=0} (-1)^q \zeta_{\rm cex}(0,\tilde\Delta^{(q)})\log l-\frac{1}{2}\sum^{p-2}_{q=0} (-1)^q \zeta_{\rm cex}'(0,\tilde\Delta^{(q)}).
\end{align*}

Next, from equation (\ref{t2.2})
\[
{t_{1,{\rm reg}}^{(2p-1)}}'(0)= -(-1)^{p-1} \frac{1}{2}\zeta_{\rm cex}(0,\tilde\Delta^{(p-1)})\log l
-(-1)^{p-1}\frac{1}{4}\zeta_{\rm cex}'(0,\tilde\Delta^{(p-1)}).
\]

Eventually, the contribution of the harmonics is given in equation (\ref{t1.1})
\begin{align*}
{t_2^{(2p-1)}}'(0)=& \frac{1}{2}\sum_{q=0}^{p-1} (-1)^{q+1}  m_{{\rm har},q}\left(\alpha_{q-1}-\alpha_{q}\right)\log l\\
&+\frac{1}{2} \sum_{q=0}^{p-1} (-1)^{q+1}  m_{{\rm har},q}\left(\log \frac{2^{-\alpha_{q-1}}\Gamma(-\alpha_{q-1}+1)}{2^{-\alpha_{q}}\Gamma(-\alpha_{q}+1)}\right).
\end{align*}

Since, by definition $\alpha_q=\frac{1}{2}(1+2q-2p+1)$ and $\alpha_{q-1}=\frac{1}{2}(1+2q-2-2p+1)$, 
we obtain
\begin{align*}
{t_2^{(2p-1)}}'(0)=& \frac{1}{2}\sum_{q=0}^{p-1} (-1)^{q}  m_{{\rm har},q}\log l
+\frac{1}{2} \sum_{q=0}^{p-1} (-1)^{q+1}  m_{{\rm har},q}\log(2(p-q)).
\end{align*}

Summing up, as in equation (\ref{ww}), we obtain
\begin{align*}
\log T_{\rm global}(C_l (W))&={t_{0,{\rm reg}}^{(2p-1)}}'(0)+{t_{1,{\rm reg}}^{(2p-1)}}'(0)+{t_2^{(2p-1)}}'(0)+{t_3^{(2p-1)}}'(0)\\
=&\left(\frac{1}{2}\sum_{q=0}^{p-1} (-1)^{q}  m_{{\rm har},q}-\sum^{p-2}_{q=0} (-1)^q \zeta_{\rm cex}(0,\tilde\Delta^{(q)})-(-1)^{p-1}\frac{1}{2}\zeta_{\rm cex}(0,\tilde\Delta^{(p-1)})\right)\log l\\
&+\frac{1}{2}\sum^{p-2}_{q=0} (-1)^{q+1} \zeta_{\rm cex}'(0,\tilde\Delta^{(q)})+\frac{1}{4}(-1)^{p}\zeta_{\rm cex}'(0,\tilde\Delta^{(p-1)})\\
&-\frac{1}{2} \sum_{q=0}^{p-1} (-1)^{q}  m_{{\rm har},q}\log(2(p-q)).
\end{align*}

Using equations (\ref{forfor}) and (\ref{Todd}), this formula can be rewritten in terms of harmonics:
\begin{align*}
\log T_{\rm global}(C_l (W))
=&\frac{1}{2}\sum_{q=0}^{p-1} (-1)^{q}  m_{{\rm har},q} \log \frac{l^{2p-2q}}{2p-2q} +\frac{1}{2}\log T(W,g).
\end{align*}


\subsubsection{Even case: $m=2p$}  $\left[\frac{m}{2}\right]-1=p-1$, and hence, using equation (\ref{t2.1}),
\begin{align*}
{t_{0,{\rm reg}}^{(2p)}}'(0)=&\frac{1}{2}\sum^{p-1}_{q=0} (-1)^q \sum_{n=1}^\infty m_{{\rm cex},q,n}\left(\log\left(1+\frac{\alpha_q}{\mu_{q,n}}\right)-\log\left(1-\frac{\alpha_q}{\mu_{q,n}}\right)\right).
\end{align*}

Next, the contribution of the harmonics is given in equation (\ref{t1.1})
\begin{align*}
{t_2^{(2p)}}'(0)=& -\frac{1}{2}\sum_{q=0}^{p-1} (-1)^{q+1}  m_{{\rm har},q}\left(-\alpha_{q-1}-\alpha_{q}+1\right)\log l\\
\nonumber&+\frac{1}{2} \sum_{q=0}^{p-1} (-1)^{q+1}  m_{{\rm har},q}\log \frac{2^{-\alpha_{q-1}-\alpha_q-1}\Gamma(-\alpha_{q-1}+1)\Gamma(-\alpha_{q}+1)}{\pi}.
\end{align*}

Since, by definition $\alpha_q=\frac{1}{2}(1+2q-2p)$,  and $\alpha_{q-1}=\frac{1}{2}(1+2q-2-2p)$, we obtain
\begin{align*}
{t_2^{(2p)}}'(0)
=& \frac{1}{2}\sum_{q=0}^{p-1} (-1)^{q}  m_{{\rm har},q}(2p-2q+1)\log l\\
&+\frac{1}{2} \sum_{q=0}^{p-1} (-1)^{q+1}  m_{{\rm har},q}\log (2^{-2}((2p-2q-1)!!)^2(2p-2q+1)).
\end{align*}

Eventually, from equation (\ref{t1.2})
\begin{align*}
{t_3^{(2p)}}'(0)=&(-1)^{p} \frac{1}{4}m_{{\rm har},p}\log l+(-1)^{p} \frac{1}{2} m_{{\rm har},p}\log 2.
\end{align*}

Summing up, as in equation (\ref{ww}), we obtain 
\begin{align*}
\log T_{\rm global}(C_l (W))+\frac{1}{4}\chi( \b C_l(W))\log 2=&{t_{0,{\rm reg}}^{(2p)}}'(0)+{t_{1,{\rm reg}}^{(2p)}}'(0)+{t_2^{(2p)}}'(0)+{t_3^{(2p)}}'(0)\\
=&\left(\frac{1}{2}\sum_{q=0}^{p-1} (-1)^{q}  m_{{\rm har},q}(2p-2q+1)+(-1)^{p} \frac{1}{4}m_{{\rm har},p}\right)\log l\\
&+\frac{1}{2} \sum_{q=0}^{p-1} (-1)^{q+1}  m_{{\rm har},q}\log (2p-2q+1)((2p-2q-1)!!)^2\\
&+\frac{1}{2}\chi(W)\log 2+\frac{1}{2}\sum^{p-1}_{q=0} (-1)^q \sum_{n=1}^\infty m_{{\rm cex},q,n}\log\frac{1+\frac{\alpha_q}{\mu_{q,n}}}{1-\frac{\alpha_q}{\mu_{q,n}}}.
\end{align*}


\subsection{The regular part of the torsion for the frustum with mixed BC}
\label{8.2} By equation (\ref{w0}), 
\[
{w_{0,{\rm reg}}^{(m)}}'(0)=- \frac{1}{2}\left(\log\frac{l_2}{l_1}\right)\sum^{\left[\frac{m}{2}\right]-1}_{q=0} (-1)^q (\zeta_{\rm cex}(0,\tilde \Delta^{(q)})+(-1)^{m+1}\zeta_{\rm cex}(0,\tilde \Delta^{(q)})),
\]
whence
\begin{align*}
{w_{0,{\rm reg}}^{(2p-1)}}'(0)&=- \log\left(\frac{l_2}{l_1}\right)\sum^{p-2}_{q=0} (-1)^q \zeta_{\rm cex}(0,\tilde \Delta^{(q)}),&
{w_{0,{\rm reg}}^{(2p)}}'(0)&=0;
\end{align*}
by equation (\ref{w1}),
\[
{w_{1,{\rm reg}}^{(2p-1)}}'(0)=  (-1)^{p}\frac{1}{2}  \log\left(\frac{l_2}{l_1}\right)\zeta_{\rm cex}(0,\tilde \Delta^{(p-1)});
\]
by equation (\ref{w2}) 
\[
{w_2^{(m)}}'(0)=\frac{1}{2} \sum_{q=0}^{\left[\frac{m}{2}\right]} (-1)^{q}m_{{\rm  har}, q}  \left(\left(\frac{1}{2}+\alpha_{q-1}\right)\log \frac{l_1}{l_2}+\log 2+(-1)^m \left(\left(\frac{1}{2}+\alpha_{q-1}\right)\log \frac{l_2}{l_1}+\log 2\right)\right),
\]
and hence
\begin{align*}
{w_2^{(2p-1)}}'(0)&=  \sum_{q=0}^{p-1} (-1)^{q+1} m_{{\rm har},q}\left(p-q-\frac{1}{2}\right)\log \frac{l_2}{l_1},&
{w_2^{(2p)}}'(0)&=\sum_{q=0}^p (-1)^q m_{{\rm har}, q} \log 2;
\end{align*}

By equation (\ref{w3}),
\[
{w_3^{(2p)}}'(0)=(-1)^p\frac{1}{2} m_{{\rm  har}, p}\log 2.
\]

Thus, when $m=2p$, 
\[
{w_{0,{\rm reg}}^{(2p)}}'(0)+{w_2^{(2p)}}'(0)+{w_3^{(2p)}}'(0)=\sum_{q=0}^p (-1)^q m_{{\rm har}, q} \log 2+(-1)^p \frac{1}{2}m_{{\rm  har}, p}\log 2=\frac{1}{2}\chi(W)\log 2.
\]

When $m=2p-1$, recalling equations (\ref{peppo}) and (\ref{zetazero}),
\[
\zeta_{\rm ccl}(0,\tilde\Delta^{(q)})=(-1)^q\sum_{k=0}^q (-1)^k \zeta(0,\tilde\Delta^{(k)})=-(-1)^q\sum_{k=0}^q (-1)^k m_{{\rm har},k},
\]
since the dimension of $W$ is odd; whence, after some calculation we obtain
\begin{align*}
{w_{0,{\rm reg}}^{(2p-1)}}'(0)+{w_2^{(2p-1)}}'(0)&=\frac{1}{2}\log\left(\frac{l_2}{l_1}\right)\sum_{q=0}^{p-1} (-1)^{q+1}m_{{\rm har},q},\\
{w_{1,{\rm reg}}^{(2p-1)}}'(0)&=  \frac{1}{2}  \log\left(\frac{l_2}{l_1}\right)\sum_{k=0}^{p-1} (-1)^k m_{{\rm har},k},
\end{align*}
and hence
\[
{w_{0,{\rm reg}}^{(2p-1)}}'(0)+{w_{1,{\rm reg}}^{(2p-1)}}'(0)+{w_2^{(m)}}'(0)=0.
\]

Therefore, for any parity of $m$, the regular part of the torsion is
\[
{w_{0,{\rm reg}}^{(m)}}'(0)+{w_{1,{\rm reg}}^{(m)}}'(0)+{w_2^{(m)}}'(0)+{w_3^{(m)}}'(0)=\frac{1}{2}\chi(W)\log 2,
\]
and this is equal to the Reidemeister torsion plus the Euler part of the boundary contribution, i.e.
\[
\log \tau(C_{[l_1,l_2]} (W), \{l_1\}\times W)+\frac{1}{4}\chi(\b C_{[l_1,l_2]} (W))\log 2={w_{0,{\rm reg}}^{(m)}}'(0)+{w_{1,{\rm reg}}^{(m)}}'(0)+{w_2^{(m)}}'(0)+{w_3^{(m)}}'(0),
\]
and hence 
\[
A_{\rm BM, mix}(\b  C_{[l_1,l_2]} (W))={w_{0,{\rm sing}}^{(m)}}'(0)+{w_{1,{\rm sing}}^{(m)}}'(0),
\]
and this concludes the proof of the formulas in equation (\ref{www}).

\subsection{The regular part of the torsion for the frustum with absolute BC}
\label{8.3}
By equations (\ref{ly2}) and (\ref{ly3}),
\begin{align*}
{y_{0,{\rm reg}}^{(2p)}}'(0)=&0,\\
{y_{0,{\rm reg}}^{(2p-1)}}'(0)=&- \sum^{p-2}_{q=0} (-1)^q\zeta(0,\tilde\Delta^{(q)}) \log l_1 l_2
-\sum^{p-2}_{q=0} (-1)^q \zeta_{\rm cex}'(0,\tilde \Delta^{(q)}),\\
{y_{1,{\rm reg}}^{(2p-1)}}'(0)=&-(-1)^{p-1} \frac{1}{2}\zeta_{\rm cex}(0,\tilde\Delta^{(p-1)})\log l_1 l_2
-\frac{1}{2}(-1)^{p-1}\zeta_{\rm cex}'(0,\tilde\Delta^{(p-1)}),
\end{align*}
and, using equation (\ref{forfor}),
\[
{y_{0,{\rm reg}}^{(2p-1)}}'(0)+{y_{1,{\rm reg}}^{(2p-1)}}'(0)= \sum^{p-1}_{q=0} (-1)^q m_{{\rm har}, q} (2p-1-2q)\log l_1 l_2+\log T(W,g).
\]

By equation (\ref{h1})
\begin{align*}
{y_{0,{\rm reg}}^{(2p-1)}}'(0)+{y_{1,{\rm reg}}^{(2p-1)}}'(0)+{y_{2}^{(2p-1)}}'(0)=&\log T(W,g)+\frac{1}{2} \sum_{q=0,q\not= p}^{2p-1} (-1)^{q+1} 
m_{{\rm har},q} \log\frac{l_2^{2p-2q}-l_1^{2p-2q}}{2p-2q}\\
&+ \frac{1}{2}(-1)^p m_{{\rm har},p}\log\log \frac{l_2}{l_1},
\end{align*}
and by equation (\ref{h2})
\begin{align*}
{y_{0,{\rm reg}}^{(2p)}}'(0)+{y_{2}^{(2p)}}'(0)+{y_{3}^{(2p)}}'(0)=&\frac{1}{2} \chi(W)\log 2+ \frac{1}{2} \sum_{q=0}^{2p} (-1)^{q}  m_{{\rm har},q}\log\frac{l_2^{2p-2q+1}-l_1^{2p-2q+1}}{2p-2q+1}.
\end{align*}

Comparison with Proposition 3.3 of \cite{HS3}, proves the following formulas:
\[
{y_{0,{\rm reg}}^{(m)}}'(0)+{y_{1,{\rm reg}}^{(m)}}'(0)+{y_{2}^{(m)}}'(0)+{y_{3}^{(m)}}'(0)=\log\tau_{\rm R}(C_{[l_1,l_2]}(W^{(m)}))+\frac{1}{2}\chi(W^{(m)})\log 2,
\]
and 
\[
{y_{0,{\rm sing}}^{(m)}}'(0)+{y_{1,{\rm sing}}^{(m)}}'(0)=A_{\rm BM,abs}(\b C_{[l_1,l_2]}(W),
\]
and this concludes the proof of the formulas in equation (\ref{www1}).

\section{The anomaly boundary term}
\label{nove}

We show in this section that the anomaly boundary term of the frustum with mixed BC is twice the one of the cone. This concludes the proof of Theorem \ref{t0}. 

By equations (\ref{wsin0}) and (\ref{wsin1}), 
\[
\begin{aligned}
{w_{0,{\rm sing}}^{(m)}}'(0)=& \frac{1}{4}\sum^{\left[\frac{m}{2}\right]-1}_{q=0} (-1)^q \sum_{j=1}^m \left((\hat\Phi_{q,j,-}(0;l_2,l_1)-\hat \Phi_{q,j,+}(0;l_1,l_2))\right.\\
&\left.+(-1)^{m+1}(\hat \Phi_{q,j,+}(0;l_2,l_1)-\hat\Phi_{q,j,-}(0;l_1,l_2))\right)\Ru_{s=j}\zeta(s,U),\\
{w_{1,{\rm sing}}^{(2p-1)}}'(0)=& (-1)^{p-1}\frac{1}{4}  \sum_{j=1}^{2p-1} \left( \hat \Phi_{p-1,j,0}(0;l_2,l_1)-\hat \Phi_{p-1,j,0}(0;l_1,l_2) \right)\Ru_{s=j}\zeta(s,U),
\end{aligned}
\]
where (see equation (\ref{phiphi}))
\[
\hat \Phi_{q,j,\pm}(s;l_1,l_2)=\int_0^\infty t^{s-1}\frac{1}{2\pi i}\int_{\Lambda_{\theta,c}}\frac{\e^{-\lambda t}}{-\lambda} \hat \phi_{q,j,\pm}(\lambda;l_1,l_2) d\lambda dt,
\]
and  the $\hat \phi$ are given in equation (\ref{ppmix}). Let introduce the linear operator
\[
\T(f)=\int_0^\infty t^{s-1}\frac{1}{2\pi i}\int_{\Lambda_{\theta,c}}\frac{\e^{-\lambda t}}{-\lambda} f (\lambda)d\lambda dt.
\]

Note that
\[
\T(f(a\_))(\_)=a^s \T(f(\_))(\_),
\]
and hence by Lemma \ref{tec1}
\[
\hat\Phi_{q,j,\pm}(s;l_1,l_2)=\sgn(l_1-l_2)^j l_1^{2s}\Phi_{q,j}(s)+ \sgn(l_2-l_1)^j l_2^{2s}\hat\Phi_{q,j,\pm \sgn(l_2-l_1)}(s).
\]

Therefore, 
\begin{align*}
(\hat\Phi_{q,j,-}(0;l_2,l_1)-\hat \Phi_{q,j,+}(0;l_1,l_2))&+(-1)^{m+1}(\hat \Phi_{q,j,+}(0;l_2,l_1)-\hat\Phi_{q,j,-}(0;l_1,l_2))\\
=&(1-(-1)^j)\Phi_{q,j}(0)-\hat\Phi_{q,j,+}(0)+(-1)^j\hat\Phi_{q,j,-}(0)\\
&+(-1)^{m+1}\left((1-(-1)^j)\Phi_{q,j}(0)-\hat\Phi_{q,j,-}(0)+(-1)^j\hat\Phi_{q,j,+}(0)\right);
\end{align*}
and 
\begin{align*}
\hat\Phi_{p-1,j,0}(0;l_2,l_1)-\hat \Phi_{p-1,j,0}(0;l_1,l_2))
=&(1-(-1)^j)\Phi_{p-1,j}(0)-(1-(-1)^j) \hat \Phi_{p-1,j,0}(0).
\end{align*}


It is  now convenient to distinguish odd and even cases. If $m=2p-1$ is odd, then the relevant values of the index $j$ are the odd ones, since the zeta function $\zeta(s,U)$ only has poles at odd integers $s$, while if $m=2p$ is even, the relevant values for $j$ are the even ones, for similar reason. Thus, if $m=2p-1$, 
\begin{align*}
(\hat\Phi_{q,2k-1,-}(0;l_2,l_1)&-\hat \Phi_{q,2k-1,+}(0;l_1,l_2))+(\hat \Phi_{q,2k-1,+}(0;l_2,l_1)-\hat\Phi_{q,2k-1,-}(0;l_1,l_2))\\
=&4\Phi_{q,2k-1}(0)-2\hat\Phi_{q,2k-1,+}(0)-2\hat\Phi_{q,2k-1,-}(0),
\end{align*}
and
\begin{align*}
\hat\Phi_{p-1,2k-1,0}(0;l_2,l_1)-\hat \Phi_{p-1,2k-1,0}(0;l_1,l_2))
=&2\Phi_{p-1,j}(0)-2 \hat \Phi_{p-1,j,0}(0),
\end{align*}
if $m=2p$ is even, 
\begin{align*}
(\hat\Phi_{q,2k,-}(0;l_2,l_1)&-\hat \Phi_{q,2k,+}(0;l_1,l_2))+(\hat \Phi_{q,2k,+}(0;l_2,l_1)-\hat\Phi_{q,2k,-}(0;l_1,l_2))\\
=&2\hat\Phi_{q,2k-1,-}(0)-2\hat\Phi_{q,2k-1,+}(0).
\end{align*}

Now, recalling the singular contribution for the cone given in Section \ref{singconesing}
\[
\begin{aligned}
{t_{0,{\rm sing}}^{(2p-1)}}'(0)&= \frac{1}{4}\sum^{p-2}_{q=0} (-1)^q \sum_{j=1}^{p-2} (2\Phi_{q,2j-1}(0)-\hat \Phi_{q,2j-1,+}(0)-\hat\Phi_{q,2j-1,-}(0))\Ru_{s=2j-1}\zeta(s,U),\\
{t_{0,{\rm sing}}^{(2p)}}'(0)&= \frac{1}{4}\sum^{p-1}_{q=0} (-1)^q \sum_{j=1}^{p-1} (\hat \Phi_{q,2j,-}(0))
-\hat\Phi_{q,2j,+}(0))\Ru_{s=2j}\zeta(s,U),
\end{aligned}
\]
and
\[
{t_{1,{\rm sing}}^{(2p-1)}}'(0)= (-1)^{p-1}\frac{1}{4}  \sum_{j=1}^m \left(  \Phi_{p-1,j}(0)-\hat \Phi_{p-1,j,0}(0) \right)\Ru_{s=j}\zeta(s,U),
\]
and hence we have proved that
\begin{align*}
{w_{0,{\rm sing}}^{(m)}}'(0)=2{t_{0,{\rm sing}}^{(m)}}'(0),\\
{w_{1,{\rm sing}}^{(2p-1)}}'(0)=2{t_{1,{\rm sing}}^{(2p-1)}}'(0).
\end{align*}

Now, by the final formula in the previous section, and by Lemma 4.1 of \cite{HS3}
\[
{w_{0,{\rm sing}}^{(m)}}'(0)+{w_{1,{\rm sing}}^{(m)}}'(0)=A_{\rm BM, mix}(\b  C_{[l_1,l_2]} (W))=\int_W B,
\]
where $B$ is defined in Section 4.3 of \cite{HS3} (based on \cite{BM1}). Therefore,
\[
{t_{0,{\rm sing}}^{(m)}}'(0)+{t_{1,{\rm sing}}^{(m)}}'(0)=\frac{1}{2}\int_W B,
\]
and this is exactly the term that we would obtain applying the formula of \cite{BM1} \cite{BM2} on the cone, as if it were a smooth manifold. This concludes the proof of Theorem \ref{t0}.

\section{The limiting case}

We address the following question: is there any relationship between the analytic torsion of the frustum and that of the cone? In \cite{HS3} we proved that, in the odd case $m=2p-1$,  regularising the analytic torsion of the frustum with absolute BC (taking the quotient by the suitable factor) and taking the limit for $l_1\to 0^+$ we obtained the torsion of the cone. The results of the present paper permit a more  explicit unified analysis for all dimensions, and a possible interpretation for the regularising factor.

The idea is to consider the set of the formal eigenfunctions of the cone that are not square integrable. If we proceed formally, applying the boundary conditions, these eigenfunctions gives   a new set of eigenvalues for the formal operator, that for simplicity we call the negative part of the spectrum. We verify that the negative part of the spectrum can be treated by the same method used for the positive part of the spectrum, up to some technical points, that we describe in details. As a result, we obtain a new term, that we call the negative part of the analytic torsion, and that we denote by $\log  T_-(C_{l}(W))$. We give a formula for this term in Proposition \ref{neg1}, that shows clearly the analogies and the differences with the regular torsion. Next, in Proposition \ref{neg2} we give the expansion for the logarithm of the ratio of the analytic torsion of the cone to the negative negative torsion of the cone (with $l=l_1$), and eventually we show in Theorem \ref{t2} that the finite part of this ratio coincides with the analytic torsion of the cone (with $l=l_2$) up to a classical boundary term (see Remark \ref{euler}).  

\subsection{The ``negative" analytic torsion of the cone}

We proceed with the notation of Section \ref{s1}. The set $\SS^{(q)}$ of the eigenvalues of the equation $\Delta^{(q)} u=\lambda^2 u$, with absolute BC and $\lambda\not =0$, is $\SS^{(q)}=\SS^{(q)}_+\cup \SS^{(q)}_-$, where $\SS^{(q)}_+=\Sp_+\Delta^{(q)}_{\rm abs}$, and $\SS^{(q)}_-$ is the same set with $\mu_{x,n}$ replaced by $-\mu_{x,n}$, $x=q,q-1,q-2$, namely:
if $m=\dim W=2p-1$, $p\geq 1$:
\begin{align*}
\SS_-^{(q)} &= \left\{m_{{\rm cex},q,n} : \hat j^{2}_{-\mu_{q,n},\alpha_q,k}/l^{2}\right\}_{n,k=1}^{\infty}
\cup
\left\{m_{{\rm cex},q-1,n} : \hat j^{2}_{-\mu_{q-1,n},\alpha_{q-1},k}/l^{2}\right\}_{n,k=1}^{\infty} \\
&\cup \left\{m_{{\rm cex},q-1,n} : j^{2}_{-\mu_{q-1,n},k}/l^{2}\right\}_{n,k=1}^{\infty} \cup \left\{m _{{\rm cex},q-2,n} :
j^{2}_{-\mu_{q-2,n},k}/l^{2}\right\}_{n,k=1}^{\infty} \\
&\cup \left\{m_{{\rm har},q}:\hat j^2_{-|\alpha_q|,\alpha_q,k}/l^{2}\right\}_{k=1}^{\infty} \cup \left\{ m_{{\rm har},q-1}:\hat
j^2_{-|\alpha_{q-1}|,\alpha_q,k}/l^{2}\right\}_{k=1}^{\infty};
\end{align*}
if $m=\dim W=2p$, $p\geq 1$:
\begin{align*}
\SS_-^{(q\not= p, p+1)} &= \left\{m_{{\rm cex},q,n} : \hat j^{2}_{-\mu_{q,n},\alpha_q,k}/l^{2}\right\}_{n,k=1}^{\infty}
\cup
\left\{m_{{\rm cex},q-1,n} : \hat j^{2}_{-\mu_{q-1,n},\alpha_{q-1},k}/l^{2}\right\}_{n,k=1}^{\infty} \\
&\cup \left\{m_{{\rm cex},q-1,n} : j^{2}_{-\mu_{q-1,n},k}/l^{2}\right\}_{n,k=1}^{\infty} \cup \left\{m _{{\rm cex},q-2,n} :
j^{2}_{-\mu_{q-2,n},k}/l^{2}\right\}_{n,k=1}^{\infty} \\
&\cup \left\{m_{{\rm har},q}:\hat j^2_{-|\alpha_q|,\alpha_q,k}/l^{2}\right\}_{k=1}^{\infty} \cup \left\{ m_{{\rm har},q-1}:\hat j^2_{-|\alpha_{q-1}|,\alpha_{q-1},k}/l^{2}\right\}_{k=1}^{\infty},
\end{align*}
\begin{align*}
\SS_-^{(p)} &= \left\{m_{{\rm cex},p,n} : \hat j^{2}_{-\mu_{p,n},\alpha_p,k}/l^{2}\right\}_{n,k=1}^{\infty}
\cup
\left\{m_{{\rm cex},p-1,n} : \hat j^{2}_{-\mu_{p-1,n},\alpha_{p-1},k}/l^{2}\right\}_{n,k=1}^{\infty} \\
&\cup \left\{m_{{\rm cex},p-1,n} : j^{2}_{-\mu_{p-1,n},k}/l^{2}\right\}_{n,k=1}^{\infty} \cup \left\{m _{{\rm cex},p-2,n} :
j^{2}_{-\mu_{p-2,n},k}/l^{2}\right\}_{n,k=1}^{\infty} \\
&\cup \left\{\frac{1}{2}m_{{\rm har},p}: j^2_{\frac{1}{2}}/l^{2}\right\}_{k=1}^{\infty} \cup \left\{\frac{1}{2}m_{{\rm har},p}: j^2_{-\frac{1}{2}}/l^{2}\right\}_{k=1}^{\infty}\cup \left\{ m_{{\rm har},p-1}:\hat j^2_{-|\alpha_{p-1}|,\alpha_{p-1},k}/l^{2}\right\}_{k=1}^{\infty},
\end{align*}
\begin{align*}
\SS_-^{(p+1)} &= \left\{m_{{\rm cex},p+1,n} : \hat j^{2}_{-\mu_{p+1,n},\alpha_{p+1},k}/l^{2}\right\}_{n,k=1}^{\infty}
\cup
\left\{m_{{\rm cex},p,n} : \hat j^{2}_{-\mu_{p,n},\alpha_{p},k}/l^{2}\right\}_{n,k=1}^{\infty} \\
&\cup \left\{m_{{\rm cex},p,n} : j^{2}_{-\mu_{p,n},k}/l^{2}\right\}_{n,k=1}^{\infty} \cup \left\{m _{{\rm cex},p-1,n} :
j^{2}_{-\mu_{p-1,n},k}/l^{2}\right\}_{n,k=1}^{\infty} \\
&\cup \left\{ m_{{\rm har},p+1}:\hat j^2_{-|\alpha_{p+1}|,\alpha_{p+1},k}/l^{2}\right\}_{k=1}^{\infty} \cup \left\{\frac{1}{2}m_{{\rm har},p}: 
j^2_{-\frac{1}{2}}/l^{2}\right\}_{k=1}^{\infty} \cup \left\{\frac{1}{2}m_{{\rm har},p}: j^2_{\frac{1}{2}}/l^{2}\right\}_{k=1}^{\infty},
\end{align*}
where  the $j_{\mu,k}$ are the positive zeros of the Bessel function $J_{\mu}(x)$,  the $\hat j_{\mu,c,k}$ are the positive zeros of
the function $\hat J_{\mu,c}(x) = c J_\mu (x) + x J'_\mu(x)$,  $c\in \R$, $\alpha_q$ and $\mu_{q,n}$ are defined in Lemma \ref{l2}.

\begin{rem}\label{rip1} The above description of the negative spectrum is always valid except that for the eigenvalues associated to the harmonics of the section  in the odd case $m=2p-1$. In such a case, the eigenfunctions associated to these eigenvalues are not the Bessel function themselves, as observed at the end of Lemma \ref{l2}. We will take care of this difference explicitly when we treat the term of the analytic torsion associated to these eigenvalues, in Subsection \ref{harmanom} below.
\end{rem} 

Note that the set $\SS^{(q)}$ satisfy all the same properties satisfied by the set $\SS^{(q)}_+$ and used in the previous sections in order to define and analyse the associated spectral functions. Following this idea, and proceeding as in Section \ref{s4}, we consider  the functions
\begin{align*}
Z_{q}^-(s)&=\sum^{\infty}_{n,k=1} m_{{\rm cex},q,n}j^{-2s}_{-\mu_{q,n},k},&
\hat Z_{q,\pm}^-(s)&=\sum^{\infty}_{n,k=1} m_{{\rm cex},q,n}\hat j^{-2s}_{-\mu_{q,n},\pm\alpha_q,k},
&z_{q,\pm}(s)&=\sum_{k=1}^{\infty} j^{-2s}_{\pm\alpha_{q},k},
\end{align*}
and 
\[
t^{(m)}_{\rm Cone,-}(s)=t_{0,-}^{(m)}(s)+t_{1,-}^{(m)}(s)+t^{(m)}_{2,-}(s)+t^{(m)}_{3,-}(s),
\]
where 
\begin{align*}
t_{0,-}^{(m)}(s)&= \frac{l^{2s}}{2}\sum^{\left[\frac{m}{2}\right]-1}_{q=0} (-1)^q \left((Z_q^-(s)-\hat Z_{q,+}^-(s))
+(-1)^{m-1}(Z_q^-(s)-\hat Z_{q,-}^-(s))\right),\\
t_{1,-}^{(2p-1)}(s)&= (-1)^{p-1}\frac{l^{2s}}{2}  \left(  Z^-_{p-1}(s)-\hat Z^-_{p-1,0}(s) \right),&t_{1,-}^{(2p)}(s)&=0\\
t_{2,-}^{(m)}(s)&= \frac{l^{2s}}{2} \sum_{q=0}^{\left[\frac{m-1}{2}\right]} (-1)^{q+1}  m_{{\rm har},q}\left(z_{q-1,+}(s)+(-1)^m z_{q,+}(s)\right),\\
t_{3,-}^{(2p)}(s)&= (-1)^{p+1} m_{{\rm har},p}\frac{l^{2s}}{4}\left(z_{p,+}(s)+z_{p,-}(s)\right),&t_{3,-}^{(2p-1)}(s)&=0,
\end{align*}

The aim of this section is to determine the quantity
\[
\log T_{-}(C_l(W^{(m)})=t^{(m)}_{\rm Cone,-}(0).
\]

\subsubsection{The contribution of the regular part}  The functions $t_{0,-}$ and $t_{1,-}$ are double series as  $t_{0}$ and $t_{1}$, and, up to solving some  technical problems,  can be analysed by the same method used in Section \ref{calc1}. The relevant sequences are now 
$S_{q}^-=\{m_{{\rm cex},q,n} : j_{-\mu_{q,n}, k}^2\}$, and 
$\hat S_{q,\pm}^-=\{m_{{\rm cex},q,n} : j_{-\mu_{q,n}, \pm \alpha_q, k}^2\}$.  We obtain the following representation  for associated logarithmic Gamma functions:
\begin{align*}
\log \Gamma(-\lambda,S_{q}^-)=&-\log\prod_{k=1}^\infty \left(1+\frac{(-\lambda)}{ j_{-\mu_{q,n},k}^2}\right)\\
=&-\log  I_{-\mu_{q,n}}(\sqrt{-\lambda})-\mu_{q,n}\log\sqrt{-\lambda}+\mu_{q,n}\log 2-\log\Gamma(1-\mu_{q,n}).
\end{align*}
\begin{align*}
\log \Gamma(-\lambda,\hat S_{q,\pm}^-)=&-\log\prod_{k=1}^\infty \left(1+\frac{(-\lambda)}{\hat j_{-\mu_{q,n},\pm\alpha_q,k}^2}\right)\\
=&-\log \hat I_{-\mu_{q,n},\pm\alpha_q}(\sqrt{-\lambda})-\mu_{q,n}\log\sqrt{-\lambda}+\mu_{q,n}\log 2\\
&-\log\Gamma(-\mu_{q,n})+\log\left(1\mp\frac{\alpha_q}{\mu_{q,n}}\right).
\end{align*}

Recalling
\[
I_{-\nu} (z)=\frac{2}{\pi} \sin\nu\pi K_\nu (z) + I_\nu (z),
\] 
substitution in the representations above of the logarithmic Gamma functions, and using the known asymptotic expansions for the Bessel functions, shows that the asymptotic expansions for the negative case may be deduced from the ones computed for the positive case. We give here the relevant results, using the same notation as in Section \ref{calc2}, with an  added minus index.

\begin{align*}
{t_{0,{\rm reg},-}^{(m)}}'(0)=&-\sum^{\left[\frac{m}{2}\right]-1}_{q=0} (-1)^q \left((\B_{q,-}(0)-\hat \B_{q,+,-}(0))
+(-1)^{m-1}(\B_{q,-}(0)-\hat \B_{q,-,-}(0))\right)\log l\\
&-\frac{1}{2}\sum^{\left[\frac{m}{2}\right]-1}_{q=0} (-1)^q \left(\A_{q,-}(0)+\B_{q,-}'(0)-\hat \A_{q,+,-}(0)-\hat \B_{q,+,-}'(0)\right)\\
&-(-1)^{m-1}\frac{1}{2}\sum^{\left[\frac{m}{2}\right]-1}_{q=0} (-1)^q\left(\A_{q,-}(0)+\B_{q,-}'(0)-\hat \A_{q,-,-}(0)-\hat \B_{q,-,-}'(0))\right),
\end{align*}
where
\begin{align*}
\A_{q,-}(0)&=0,\\
\B_{q,-}(0)&=\frac{1}{4}\sum_{n=1}^\infty m_{{\rm cex},q,n}=\frac{1}{4}\zeta_{\rm cex}(0,\tilde\Delta^{(q)}+\alpha_q^2)=\frac{1}{4}\zeta(0,\tilde\Delta^{(q)}), \\
\B_{q,-}'(0)&=-\frac{1}{2}\sum_{n=1}^\infty m_{{\rm cex},q,n}\log \mu_{q,n}, \\
\hat \A_{q,\pm,-}(0)&=\sum_{n=1}^\infty m_{{\rm cex},q,n}\log\left(1\pm\frac{\alpha_q}{\mu_{q,n}}\right),\\
\hat \B_{q,\pm,-}(0)&=-\frac{1}{4}\sum_{n=1}^\infty m_{{\rm cex},q,n}=-\frac{1}{4}\zeta_{\rm cex}(0,\tilde\Delta^{(q)}+\alpha_q^2)=-\frac{1}{4}\zeta_{\rm cex}(0,\tilde\Delta^{(q)}),\\
\hat \B_{q,\pm,-}'(0)&=\frac{1}{2}\sum_{n=1}^\infty m_{{\rm cex},q,n}\log\mu_{q,n},
\end{align*}
and 
\begin{align*}
\sum_{n=1}^\infty m_{{\rm cex},q,n}&=\zeta_{\rm cex}(0,\tilde\Delta^{(q)}+\alpha_q^2),&
-2\sum_{n=1}^\infty m_{{\rm cex},q,n}\log \mu_{q,n}&=\zeta_{\rm cex}'(0,\tilde\Delta^{(q)}+\alpha_q^2).
\end{align*}

Thus,
\begin{align}
\nonumber{t_{0,{\rm reg},-}^{(m)}}'(0)=&-\frac{1}{2}\sum^{\left[\frac{m}{2}\right]-1}_{q=0} (-1)^q \left(\zeta_{\rm cex}(0,\tilde\Delta^{(q)})
+(-1)^{m-1}\zeta_{\rm cex}(0,\tilde\Delta^{(q)})\right)\log l\\
\label{t2.1-}&+\frac{1}{2}\sum^{\left[\frac{m}{2}\right]-1}_{q=0} (-1)^q \left(\sum_{n=1}^\infty m_{{\rm cex},q,n}\log\left(1-\frac{\alpha_q}{\mu_{q,n}}\right)+\sum_{n=1}^\infty m_{{\rm cex},q,n}\log \mu_{q,n}\right)\\
\nonumber&+(-1)^{m-1}\frac{1}{2}\sum^{\left[\frac{m}{2}\right]-1}_{q=0} (-1)^q\left(\sum_{n=1}^\infty m_{{\rm cex},q,n}\log\left(1+\frac{\alpha_q}{\mu_{q,n}}\right)+\sum_{n=1}^\infty m_{{\rm cex},q,n}\log \mu_{q,n}\right)\\
\nonumber=&(-1)^{m+1}{t_{0,{\rm reg},-}^{(m)}}'(0)
\end{align}

Similar analysis gives
\beq\label{t2.2-}\begin{aligned}
{t_{1,{\rm reg},-}^{(2p-1)}}'(0)=& -(-1)^{p-1} \frac{1}{2}\zeta_{\rm cex}(0,\tilde\Delta^{(p-1)})\log l
-\frac{1}{4}(-1)^{p-1}\zeta_{\rm cex}'(0,\tilde\Delta^{(p-1)})
=&{t_{1,{\rm reg}}^{(2p-1)}}'(0).
\end{aligned}
\eeq

\subsubsection{The contribution of the singular part} It is easy to realise that the singular part coincides exactly with the singular part in the positive case, namely:
\beq\label{tsing-}\begin{aligned}
{t_{0,{\rm sing},-}^{(m)}}'(0)=&{t_{0,{\rm sing},-}^{(m)}}'(0),\\
{t_{1,{\rm sing},-}^{(2p-1)}}'(0)=&{t_{1,{\rm sing},-}^{(2p-1)}}'(0) .
\end{aligned}
\eeq

\subsubsection{The contribution of the harmonics}\label{harmanom} This is the contribution coming from the simple series $z_{q,+}(s)$. Respect with the harmonics for the positive torsion, where we studied the functions $z_{q,-}(s)$,  there is now a technical problem, since the values of $\alpha_q$ appearing in the $z_{q,-}(s)$ are never negative integers, while the values of the $\alpha_q$ appearing in the $z_{q,+}(s)$ maybe negative integers. This problem appears only when $m=2p-1$ is odd, that is treated below in details. In  the even case  $m=2p$, we can use the formulas in equations (\ref{p000}) and (\ref{p00}) as in Section \ref{har11}. For $0\leq q<p$, $\alpha_q = \frac{1}{2}+q-p$, and we obtain:
\begin{align*}
z_{q,+}(0)&=-\frac{1}{2}\left(\alpha_q+\frac{1}{2}\right)=-\frac{1}{2}\left(1+q-p\right)=-z_{q,-}(0)-\frac{1}{2},\\
z_{q,+}'(0)&=\log \frac{\sqrt{\pi}}{2^{\alpha_q-\frac{1}{2}}\Gamma(\alpha_q+1)}=-\log2-\log(2(p-q-1)-1)!!\\
&=-z'_{q,-}(0)+\log (2p-2q-1)-2\log 2.
\end{align*}
 
This gives:
\beq\label{t3-}
\begin{aligned}
{t_{2,-}^{(2p)}}'(0)=&-{t_{2}^{(2p)}}'(0)-\sum_{q=0}^{p-1}(-1)^{q+1}r_q\log l\\
&+\frac{1}{2}\sum_{q=0}^{p-1}(-1)^{q+1}r_q\log (2p-2q-1)(2p-2q+1)-2\sum_{q=0}^{p-1}(-1)^{q+1}r_q\log 2,\\
{t_{3,-}^{(2p)}}'(0)=&{t_{3}^{(2p)}}'(0).
\end{aligned}
\eeq
 
In the odd case, $m=2p-1$, since $\alpha_q =  1+q-p<0$ for $0\leq q\leq p-1$, as observed in Remark \ref{rip1}, the eigenfunctions of type $E$ and $O$ are not the Bessel function $J_{\alpha_q}$ of negative index, but the functions $Y_{\alpha_q}$. Whence,  function under study is the function is
\[
z_{q,+}(s)=\sum_{k=1}^\infty y_{\alpha_q,k}^{-2s},
\]
where $S=\{y_{\alpha_q,k}\}$ is the sequence of the zeros of the Bessel function $Y_{-\alpha_q}$. We proceed as follows. Consider the series representation of the Bessel function $Y_n$ ($n=-\alpha_q$),
\begin{align*}
Y_n(z) = \frac{2}{\pi} &J_n(z) \left(\log\frac{z}{2} + C\right) +\sum_{k=0}^{n-1} \frac{(n-k-1)!}{\pi k!} \left(\frac{z}{2}\right)^{2k-n}-\left(\frac{z}{2}\right)^n \frac{1}{n!}\sum_{k=1}^n \frac{1}{\pi k} - z^n\sum_{k=1}^\infty a_{n,k}.
\end{align*}  

Since
\begin{align*}
\lim_{z\to 0} z^{n}Y_n(z) =  \frac{1}{\pi} \frac{(n-1)! }{2^{-n}} = \frac{2^n (n-1)!}{\pi},
\end{align*} 
we have the  product expansion 
\[
G_n(z)= z^{n}Y_n(z) = \frac{2^n (n-1)!}{\pi}\prod_{k=-\infty, k\not=0}^{+\infty} \left(1-\frac{z}{y_{n,k}}\right).
\] 

Define
\begin{align*}
 \Upsilon_n(z) 
 = (niz)^{n} \left(J_{n}(iz) - H^{(1)}_{n}(iz)\right)
 =z^{n} \left( I_n(z) - \frac{2}{\pi i} K_n(z)\right),
\end{align*} 
then
\[
\Upsilon_n(z) =\frac{2^n (n-1)!}{\pi}\prod_{k=1}^{+\infty} \left( 1 + \frac{z^2}{y^2_{n,k}}\right),
\]
and
\begin{align*}
\log\left(-\lambda, S\right) &= - \log \prod_{k=1}^{\infty}  \left( 1 - \frac{\lambda}{y^2_{n,k}}\right)=-\log \Upsilon_n(\sqrt{-\lambda}) + n\log 2 + \log(n-1)!  - \log \pi.
\end{align*}

Using the classical expansion  for large $z$ of $I_n(z) $ and $K_n(z)$,
\[
\log\left( I_n(z) - \frac{2}{\pi i} K_n(z)\right) \sim \log I_{n}(z) + O(e^{-z}).
\] 

This implies that
\begin{align*}
\log \Upsilon_n (\sqrt{-\lambda}) &=n\log \sqrt{-\lambda} + \sqrt{-\lambda} -\frac{1}{2}\log 2\pi - \frac{1}{2}\log\sqrt{-\lambda}\\
&=\frac{2n-1}{2} \log \sqrt{-\lambda} + \sqrt{-\lambda} -\frac{1}{2}\log 2\pi,
\end{align*} 

and
\begin{align*}
\log\left(-\lambda, S\right) &=-\frac{2n-1}{2} \log \sqrt{-\lambda} - \sqrt{-\lambda} + \frac{2n+1}{2}\log 2 + \log(n-1)!  - \frac{1}{2}\log \pi.
\end{align*}

Whence, for $q\not=p-1$, 
\begin{align*}
z_{q,+}(0)&=\frac{1}{2}\left( p-q-1-\frac{1}{2}\right) = \frac{1}{2}\left(-\alpha_q +\frac{1}{2}\right) -\frac{1}{2}= - z_{q,-}(0)-\frac{1}{2},\\
z'_{q,+}(0)&=- \log \frac{2^{\left(-\frac{1}{2}-q+p\right)}(p-q-2)!}{\pi^{\frac{1}{2}} }= -\log\frac{2^{-\alpha_q +\frac{1}{2}} \Gamma(-\alpha_q)}{\pi^{\frac{1}{2}}} = -z'_{q,-}(0) - \log 2 + \log(- \alpha_q)\\
z_{p-1,+}(0) &= z_{p-1,-}(0)\\
z'_{p-1,+}(0) &= - z_{p-1,-}(0) - \log 2,
\end{align*}
and
\begin{align}\label{t2-}
{t_{2,-}^{(2p-1)}}'(0)=&-{t_2^{(2p-1)}}'(0)+\sum_{q=0}^{p-2}(-1)^{q+1}r_q\log\frac{p-q}{p-q-1}.
\end{align}

\subsection{Formula for the ``negative'' torsion and the limiting case $l_1=0$}

Collecting the results of the previous subsections we have:

\begin{align*}
\log T_-(C_l(W^{(2p-1)}))=&{t_{0,{\rm reg},-}^{(2p-1)}}'(0)+{t_{2,-}^{(2p-1)}}'(0)+{t_{3,-}^{(2p-1)}}'(0)+{t_{0,{\rm sing},-}^{(2p-1)}}'(0)\\
=&-\frac{1}{2} \sum_{q=0}^{p-1}(-1)^q r_q \log \frac{l^{2p-2q}}{2p-2q} +\frac{1}{2} \log T(W,g) + {t_{0,{\rm sing}}^{(2p-1)}}'(0)\\
&+\sum_{q=0}^{p-2}(-1)^{q+1}r_q\log\frac{p-q}{p-q-1}.
\end{align*}

\begin{align*}
\log  T_-(C_l(W^{(2p)}))=&{t_{0,{\rm reg},-}^{(2p)}}'(0)+{t_{2,-}^{(2p)}}'(0)+{t_{3,-}^{(2p)}}'(0)+{t_{0,{\rm sing},-}^{(2p)}}'(0)\\
=&-{t_{0,{\rm reg}}^{(2p)}}'(0)-{t_{2}^{(2p)}}'(0)+{t_{3}^{(2p)}}'(0)+{t_{0,{\rm sing}}^{(2p)}}'(0)\\
&+\sum_{q=0}^{p-1}(-1)^{q}r_q\log l-\frac{1}{2}\sum_{q=0}^{p-1}(-1)^{q}r_q\log (2p-2q-1)(2p-2q+1)\\
&+2\sum_{q=0}^{p-1}(-1)^{q}r_q\log 2\\
%
=&- \frac{1}{2}\sum_{q=0}^{p-1} (-1)^{q}  m_{{\rm har},q}\log \frac{l^{2p-2q+1}}{2p-2q+1}\\
&-\frac{1}{2}\sum_{q=0}^{p-1}(-1)^{q}r_q\log (2p-2q-1)(2p-2q+1)+(-1)^{p} \frac{1}{4}m_{{\rm har},p}\log l\\
&+\sum_{q=0}^{p-1}(-1)^{q}r_q\log l+\frac{1}{2} \sum_{q=0}^{p-1} (-1)^{q}  m_{{\rm har},q}\log((2p-2q-1)!!)^2
\end{align*}
\begin{align*}
&-\frac{1}{2}\sum^{p-1}_{q=0} (-1)^q \sum_{n=1}^\infty m_{{\rm cex},q,n}\left(\log\left(1+\frac{\alpha_q}{\mu_{q,n}}\right)-\log\left(1-\frac{\alpha_q}{\mu_{q,n}}\right)\right)\\
&+\sum_{q=0}^{p-1}(-1)^{q}r_q\log 2+(-1)^{p} \frac{1}{2} m_{{\rm har},p}\log 2+{t_{0,{\rm sing}}^{(2p)}}'(0).
\end{align*}

The previous formulas in the notation of Theorem \ref{t1.2} read:

\begin{prop}\label{neg1} 
\begin{align*}
\log T_-(C_l(W^{(2p-1)}))=&\frac{1}{2}\log T(W,g)-\log \frac{{\rm Det}^{p-1}_{0} \ddot\alphas_{C_l}}{{\rm Det}^{p-1}_{0,} \alphas}+A_{\rm BS,abs}(\b C_{l}(W))\\
&-\sum_{q=0}^{p-2}(-1)^{q}r_q\log\frac{p-q}{p-q-1};\\
\log T_-(C_l(W^{(2p)}))=&-\log \frac{{\rm Det}^{p-1}_0 \ddot\alphas_{C_l}}{{\rm Det}^{p-1}_0 \alphas}+\frac{1}{4}\chi(W)\log 2+A_{\rm BS,abs}(\b C_l(W))\\
&-B^{(2p)}_1(C_l(W))+B^{(2p)}_2(C_l(W))\\
&+\sum_{q=0}^{p-1}(-1)^{q}r_q\log l-\frac{1}{2}\sum_{q=0}^{p-1}(-1)^{q}r_q\log (2p-2q-1)(2p-2q+1).
\end{align*}
\end{prop}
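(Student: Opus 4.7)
\smallskip
\noindent\textbf{Proof proposal.} The statement is a pure reformulation of the explicit expressions for $\log T_-(C_l(W^{(2p-1)}))$ and $\log T_-(C_l(W^{(2p)}))$ displayed immediately before the proposition: the plan is to recognize each piece of those long formulas as one of the geometric or anomaly terms appearing in Theorem \ref{t1}, thereby rewriting the ``negative'' torsion in the same compact language. The two ingredients that do the work are the closed form
\[
\gamma_q \;=\; \frac{l^{m-2q+1}}{m-2q+1}, \qquad 0\le q\le \left[\tfrac{m-1}{2}\right],\qquad \gamma_p \;=\; l \ \text{(even case only)},
\]
together with the identifications
\[
\log\frac{{\rm Det}^{[(m-1)/2]}_0\ddot\alphas_{C_l}}{{\rm Det}^{[(m-1)/2]}_0\alphas} \;=\; \tfrac{1}{2}\sum_{q=0}^{[(m-1)/2]}(-1)^q r_q\log\gamma_q \;+\; \tfrac{(-1)^p r_p}{4}\log\gamma_p\cdot\mathbf 1_{m=2p},
\]
and the equality $A_{\rm BM,abs}(\b C_l(W))={t_{0,{\rm sing}}^{(m)}}'(0)+{t_{1,{\rm sing}}^{(m)}}'(0)$ coming from the definition in equation (\ref{ww}), combined with the fact, proved in Section \ref{singconesing} (see equation (\ref{tsing-})), that the singular parts of the negative torsion agree with those of the positive torsion.

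In the odd case $m=2p-1$ the derived formula already isolates $\tfrac12\log T(W,g)$ as a summand, and the remaining geometric contribution $-\tfrac12\sum_{q=0}^{p-1}(-1)^q r_q\log(l^{2p-2q}/(2p-2q))$ is precisely $-\log({\rm Det}^{p-1}_0\ddot\alphas_{C_l}/{\rm Det}^{p-1}_0\alphas)$ by the formula for $\gamma_q$ displayed above; the singular contribution equals $A_{\rm BM,abs}(\b C_l(W))$ by equation (\ref{ww}) and equation (\ref{tsing-}); and the leftover finite sum is rewritten as $-\sum_{q=0}^{p-2}(-1)^q r_q\log\tfrac{p-q}{p-q-1}$ via $(-1)^{q+1}=-(-1)^q$.

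In the even case $m=2p$, the term $\gamma_p^{(-1)^p r_p/4}=l^{(-1)^p r_p/4}$ in the Det--ratio absorbs the summand $(-1)^p\tfrac{r_p}{4}\log l$ coming from $t_{3,-}^{(2p)}$, while the explicit factor $-\tfrac12\sum_{q=0}^{p-1}(-1)^q r_q\log(l^{2p-2q+1}/(2p-2q+1))$ accounts for the rest of $-\log{\rm Det}^{p-1}_0\ddot\alphas_{C_l}/{\rm Det}^{p-1}_0\alphas$. The coexact double--series contribution carries the opposite sign of that in the positive case (owing to the exchange $\alpha_q\mapsto -\alpha_q$ in $z_{q,\pm}$ and in the regular part computation leading to equation (\ref{t2.1-})), which produces the $-B_1^{(2p)}$ on the right--hand side. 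The remaining collection of constants must be matched to $+B_2^{(2p)}(C_l(W))=\tfrac14\chi(W)\log 2$ together with the classical boundary contribution $\tfrac14\chi(\b C_l(W))\log 2$ implicit in $A_{\rm BS,abs}$; here Poincar\'e duality on $W$ (so that $r_q=r_{2p-q}$) lets one rewrite the alternating sums $\sum_{q=0}^{p-1}(-1)^q r_q$ and $(-1)^p r_p$ together as $\tfrac12\chi(W)$.

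The only real labour is the even case bookkeeping of the $\log 2$, $\log(2p-2q\pm 1)$, and $\log((2p-2q-1)!!)$ terms produced by ${t_{2,-}^{(2p)}}'(0)$ and ${t_{3,-}^{(2p)}}'(0)$ in equations (\ref{t3-}): one must split
\[
\log(2p-2q-1)(2p-2q+1)\;=\;\log(2p-2q-1)+\log(2p-2q+1),
\]
shift the index $q\mapsto q+1$ in the appropriate pieces, and use $r_q=r_{2p-q}$ to fold the sums so that the double factorials $((2p-2q-1)!!)$ collect into the combination defining $B_1^{(2p)}$, while the $\log 2$ residues collect (after using $\sum_{q=0}^{2p}(-1)^q r_q=\chi(W)$) into $\tfrac12\chi(W)\log 2$. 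I expect this final alignment of constants to be the main source of sign errors, but it is entirely mechanical.
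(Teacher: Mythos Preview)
Your approach is correct and is precisely what the paper does: the proposition is stated as ``The previous formulas in the notation of Theorem~\ref{t1} read:'' with no separate proof, so the task is exactly the term-by-term identification you describe, using the $\gamma_q$ formula for the Det-ratio, the identification $A_{\rm BM,abs}(\b C_l(W))={t_{0,{\rm sing}}^{(m)}}'(0)+{t_{1,{\rm sing}}^{(m)}}'(0)$ from equation~(\ref{ww}) combined with~(\ref{tsing-}), and the definitions of $B_1^{(2p)}$, $B_2^{(2p)}$ from Theorem~\ref{t1}.

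One small correction to your even-case bookkeeping: you are overcomplicating the final alignment. The two sums $\sum_{q=0}^{p-1}(-1)^{q}r_q\log l$ and $-\tfrac{1}{2}\sum_{q=0}^{p-1}(-1)^{q}r_q\log (2p-2q-1)(2p-2q+1)$ appear \emph{verbatim} both in the explicit formula preceding the proposition and in the statement of the proposition itself, so no index shift or folding is required for them. The term $\tfrac{1}{2}\sum_{q=0}^{p-1}(-1)^q r_q\log((2p-2q-1)!!)^2=\sum_{q=0}^{p-1}(-1)^q r_q\log(2p-2q-1)!!$ together with the coexact double series already constitute $-B_1^{(2p)}$ exactly, with no further manipulation. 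The only genuine regrouping is the $\log 2$ terms: $\sum_{q=0}^{p-1}(-1)^{q}r_q\log 2+(-1)^{p}\tfrac{1}{2}r_p\log 2=\tfrac{1}{2}\chi(W)\log 2$ by Poincar\'e duality, which then splits as $\tfrac{1}{4}\chi(W)\log 2+B_2^{(2p)}$. So the mechanical part is shorter than you feared.
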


We are now able to study the constant part of the limit for $l_1\to 0^+$ of the difference between the logarithms of the torsion of the frustum and the one of the negative torsion with $l=l_1$. For we need a few lemmas and a remark.

\begin{lem} For $l_1\to 0^+$, we have the expansions: 
\begin{align*}
\log \frac{{\rm Det} \ddot\alphas_F}{{\rm Det} \alphas}=&  \frac{(-1)^p}{2} r_p \log\log\frac{1}{l_1} +\frac{1}{2}\sum_{q=0}^{p-2} (-1)^q r_q \log l_1^{2p-2-2q}\\
&+ \log \frac{{\rm Det}^{p-1}_0 \ddot\alphas_{C_{l_2}}}{{\rm Det}^{p-1}_0 \alphas}
+\frac{1}{2}\sum_{q=0}^{p-2} (-1)^{q} r_{q} \log (2p-2-2q)+O(l_1),
\end{align*}
if $m=2p-1$, and
\begin{align*} 
\log \frac{{\rm Det} \ddot\alphas_F}{{\rm Det} \alphas}
=&-\frac{1}{2}\sum_{q=0}^{p-1} (-1)^q  r_q\log l_1^{2p-2q-1}\\
&+ \log \frac{{\rm Det}^{p-1}_0 \ddot\alphas_{C_{l_2}}}{{\rm Det}^{p-1}_0 \alphas}-\frac{1}{2}\sum_{q=0}^{p-1} (-1)^{q}  r_q\log(2p-2q-1)+\frac{1}{4}(-1)^p r_p \log l_2+O(l_1),
\end{align*}
if $m=2p$.
\end{lem}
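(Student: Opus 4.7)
The plan is a direct, if somewhat bookkeeping-heavy, computation starting from the explicit product formulas recalled in Section \ref{sgeo}. Taking logarithms we have
\[
\log\frac{{\rm Det}\ddot\alphas_F}{{\rm Det}\alphas}=\frac{1}{2}\sum_{q=0}^{m}(-1)^q r_q\log\Gamma_q,
\qquad \Gamma_q=\int_{l_1}^{l_2}x^{m-2q}\,dx,
\]
and analogously for the cone (with the special factor $\tfrac14$ multiplying $\log\gamma_p$ in the even case, due to the ideal boundary condition). The strategy is to split the $q$-sum at $q=\lceil m/2\rceil$, use the Poincar\'e duality symmetry $r_q=r_{m-q}$ to fold the tail $q>m/2$ back onto the head $q<m/2$ (where the integrals have different small-$l_1$ behaviour), expand each contribution as $l_1\to 0^+$, and finally recognise the surviving $l_2$-dependent finite part as precisely $\log({\rm Det}^{[(m-1)/2]}_0\ddot\alphas_{C_{l_2}}/{\rm Det}^{[(m-1)/2]}_0\alphas)$.

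\textbf{Odd case $m=2p-1$.} I would write
\[
\log\Gamma_q=\begin{cases}
(2p-2q)\log l_2-\log(2p-2q)+O(l_1^{2p-2q}), & q\le p-1,\\[2pt]
\log\log\frac{l_2}{l_1}, & q=p,\\[2pt]
-(2q-2p)\log l_1-\log(2q-2p)+O(l_1^{2q-2p}), & q\ge p+1,
\end{cases}
\]
and expand $\log\log(l_2/l_1)=\log\log(1/l_1)+O(1/\log(1/l_1))$. The tail $q\in\{p+1,\dots,2p-1\}$ is reindexed via $q\mapsto 2p-1-q'$, using $(-1)^q r_q=-(-1)^{q'}r_{q'}$ and $2q-2p=2p-2-2q'$; this produces the $\log l_1^{2p-2-2q}$ terms and their matching constants $\log(2p-2-2q)$. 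The head $q\le p-1$ contributes exactly $\log({\rm Det}^{p-1}_0\ddot\alphas_{C_{l_2}}/{\rm Det}^{p-1}_0\alphas)$ by direct comparison with $\gamma_q=l_2^{2p-2q}/(2p-2q)$.

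\textbf{Even case $m=2p$.} There is no logarithmic exponent; I would expand
\[
\log\Gamma_q=\begin{cases}
(2p{+}1{-}2q)\log l_2-\log(2p{+}1{-}2q)+O(l_1), & q\le p,\\[2pt]
-(2q{-}2p{-}1)\log l_1-\log(2q{-}2p{-}1)+O(l_1), & q\ge p+1,
\end{cases}
\]
reindex the tail by $q\mapsto 2p-q'$ (where $(-1)^q r_q=(-1)^{q'}r_{q'}$ and $2q-2p-1=2p-2q'-1$) to produce $-\tfrac12\sum_{q=0}^{p-1}(-1)^qr_q\log l_1^{2p-2q-1}$ and the matching constants $-\log(2p-2q-1)$. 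The head $q\le p$ gives $\tfrac12\sum_{q=0}^{p-1}(-1)^qr_q[(2p{+}1{-}2q)\log l_2-\log(2p{+}1{-}2q)]+\tfrac{(-1)^p}{2}r_p\log l_2$; comparing with the cone expression $\log\gamma_p^{(-1)^pr_p/4}\prod_{q<p}\gamma_q^{(-1)^qr_q/2}$ shows that the head equals $\log({\rm Det}^{p-1}_0\ddot\alphas_{C_{l_2}}/{\rm Det}^{p-1}_0\alphas)+\tfrac{(-1)^p}{4}r_p\log l_2$, the extra quarter coming precisely from the $\tfrac14$ versus $\tfrac12$ weight on the middle form $r_p$.

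The only real obstacle is clean bookkeeping: tracking signs under the Poincar\'e reindexing, being careful that the $q=p$ term behaves differently in the odd case (logarithmic integral, producing the $\log\log(1/l_1)$) versus the even case (regular integral, producing the lone $\tfrac{(-1)^p}{4}r_p\log l_2$ anomaly), and keeping the error terms uniformly of order $O(l_1)$ (which is immediate since $l_1^{2p-2-2q}=O(l_1^0)$ would be wrong---one needs to notice that all the exponents $2p-2q$ appearing on $l_1$ in $O(\cdot)$ terms from the head are $\ge 2$, and the tail contributes $O(l_1^{2q-2p})=O(l_1)$ for the smallest such exponent). With these points handled, both displayed expansions fall out by inspection.
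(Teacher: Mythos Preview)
Your approach is essentially the same as the paper's: both split the $q$-sum at the middle degree, fold the tail onto the head via Poincar\'e duality $r_q=r_{m-q}$ (with the sign flip $(-1)^q\mapsto -(-1)^{q'}$ in the odd case and no flip in the even case), expand, and identify the surviving $l_2$-dependent piece with the cone determinant. The paper carries exact expressions for $\Gamma_q$ a bit longer before expanding, while you expand $\log\Gamma_q$ immediately in each range; this is purely cosmetic.

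One small inconsistency to clean up: in the odd case you correctly note $\log\log(l_2/l_1)=\log\log(1/l_1)+O(1/\log(1/l_1))$, but then assert at the end that all remainders are uniformly $O(l_1)$. The $q=p$ term is the dominant error and is genuinely only $O(1/\log(1/l_1))$, not $O(l_1)$. (This imprecision is inherited from the lemma as stated; since only the finite part at $l_1=0$ is used downstream, it is harmless for the application.)
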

\begin{proof} When $m=2p-1$,
\begin{align*}
\sum_{q=p+1}^{2p-1} (-1)^{q}  r_q\log\frac{l_2^{2p-2q}-l_1^{2p-2q}}{2p-2q}&=\sum_{q=p+1}^{2p-1} (-1)^q r_{q} \log \frac{l_2^{2q-2p}-l_1^{2q-2p}}{(2q-2p)(l_1l_2)^{2q-2p}}\\
&=\sum_{q=0}^{p-2} (-1)^{q+1} r_{q} \log  \frac{l_2^{2p-2-2q}-l_1^{2p-2-2q}}{(2p-2-2q)(l_1l_2)^{2p-2-2q}}\\
&=\sum_{q=0}^{p-2} (-1)^{q+1} r_{q} \log \frac{1-\left(\frac{l_1}{l_2}\right)^{2p-2-2q}}{2p-2-2q } +\sum_{q=0}^{p-2} (-1)^q r_q \log l_1^{2p-2-2q}.
\end{align*} 

Whence,
\begin{align*}
\log \prod_{q=0}^{2p-1} \Gamma_q
=& \sum_{q=0,q\not=p}^{2p-1} (-1)^{q}  m_{{\rm har},q}\log\frac{l_2^{2p-2q}-l_1^{2p-2q}}{2p-2q} + \frac{(-1)^p}{2} r_p \log\log\frac{l_2}{l_1}\\
=& \sum_{q=0}^{p-1} (-1)^{q}  m_{{\rm har},q}\log\frac{l_2^{2p-2q}-l_1^{2p-2q}}{2p-2q} + \frac{(-1)^p}{2} r_p \log\log\frac{l_2}{l_1}\\
+&\sum_{q=0}^{p-2} (-1)^{q+1} r_{q} \log \frac{1-\left(\frac{l_1}{l_2}\right)^{2p-2-2q}}{2p-2-2q } +\sum_{q=0}^{p-2} (-1)^q r_q \log l_1^{2p-2-2q}.
\end{align*}

When $m=2p$,
\begin{align*}
\log \prod_{q=0}^{2p} \Gamma_q
=& \sum_{q=0}^{2p} (-1)^{q}  m_{{\rm har},q}\log\frac{l_2^{2p-2q+1}-l_1^{2p-2q+1}}{2p-2q+1}\\
=& \sum_{q=0}^{p} (-1)^{q}  m_{{\rm har},q}\log\frac{l_2^{2p-2q+1}-l_1^{2p-2q+1}}{2p-2q+1}+ \sum_{q=p+1}^{2p} (-1)^q  m_{{\rm har},q}\log\frac{l_2^{2p-2q+1}-l_1^{2p-2q+1}}{2p-2q+1}\\
=& \sum_{q=0}^{p-1} (-1)^{q}  m_{{\rm har},q}\log\frac{l_2^{2p-2q+1}-l_1^{2p-2q+1}}{2p-2q+1}+ \sum_{q=p+1}^{2p} (-1)^q  m_{{\rm har},q}\log\frac{l_2^{2p-2q+1}-l_1^{2p-2q+1}}{2p-2q+1}\\
&+(-1)^p r_p \log (l_2-l_1).
\end{align*}

\begin{align*}
\sum_{q=p+1}^{2p} (-1)^q  m_{{\rm har},q}\log\frac{l_2^{2p-2q+1}-l_1^{2p-2q+1}}{2p-2q+1}=&\sum_{q=p+1}^{2p} (-1)^q  m_{{\rm har},q}\log\left(l_1^{2p-2q+1}-l_2^{2p-2q+1}\right)\\
&-\sum_{q=p+1}^{2p} (-1)^q  m_{{\rm har},q}\log(2q-1-2p).
\end{align*}

\begin{align*}
\sum_{q=p+1}^{2p} (-1)^q  m_{{\rm har},q}\log(2q-1-2p)=\sum_{q=0}^{p-1} (-1)^q  m_{{\rm har},q}\log(2p-1-2q).
\end{align*}

\begin{align*}
\sum_{q=p+1}^{2p} (-1)^q  m_{{\rm har},q}\log\left(l_1^{2p-2q+1}-l_2^{2p-2q+1}\right)
=&\sum_{q=p+1}^{2p} (-1)^q  m_{{\rm har},q}\log l_1^{2p-2q+1}\\
&+\sum_{q=p+1}^{2p} (-1)^q  m_{{\rm har},q}\log\left(1-\left(\frac{l_1}{l_2}\right)^{2q-2p-1}\right).
\end{align*}

\end{proof}

\begin{rem} Observe that the anomaly boundary term of the frustum is the sum of two equal terms each defined as an integral over on of the two boundaries, so we can write
\[
A_{\rm BS,abs}(\b C_{[l_1,l_2]}(W))=A_{\rm BS,abs}(\b C_{l_2}(W))+A_{\rm BS,abs}(\b C_{l_1}(W)),
\]
and this is true in all dimensions, see Lemma 4.1 of \cite{HS3}. 
\end{rem}

\begin{prop}\label{neg2} For small $l_1$, we have the expansions: 
\begin{align*}
\log T_{\rm abs}(C_{[l_1,l_2]}(W^{(2p-1)}))-\log  T_-(C_{l_1}(W^{(2p-1)}))=& \frac{(-1)^p}{2} r_p \log\log\frac{1}{l_1} +\sum_{q=0}^{p-1} (-1)^q r_q \log l_1^{2p-1-2q}\\
&+\log T_{\rm abs}(C_{l_2}(W))+O(l_1),\\
\log T_{\rm abs}(C_{[l_1,l_2]}(W^{(2p)}))-\log  T_-(C_{l_1}(W^{(2p)}))=&\log T_{\rm abs, ideal}(C_{l_2}(W))-\frac{1}{2}\chi(W) \log 2+O(l_1).
\end{align*}
\end{prop}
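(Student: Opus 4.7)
The plan is to substitute the explicit formulas already established. For the frustum, Theorem \ref{t1} gives $\log T_{\rm abs}(C_{[l_1,l_2]}(W))=\log T(W,g)+\log\frac{{\rm Det}\ddot\alphas_F}{{\rm Det}\alphas}+\frac{1}{2}\chi(W)\log 2+A_{\rm BM,abs}(\b C_{[l_1,l_2]}(W))$, and for the negative torsion Proposition \ref{neg1} supplies the analogous formula at $l=l_1$ in terms of $\log\frac{{\rm Det}^{p-1}_0\ddot\alphas_{C_{l_1}}}{{\rm Det}^{p-1}_0\alphas}$, $A_{\rm BM,abs}(\b C_{l_1}(W))$, and (in the even case) the $B_1^{(2p)}$, $B_2^{(2p)}$ anomalies. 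Forming the difference, the contributions $\tfrac{1}{2}\log T(W,g)$ cancel, and by the remark preceding Proposition \ref{neg2} the identity $A_{\rm BM,abs}(\b C_{[l_1,l_2]}(W))-A_{\rm BM,abs}(\b C_{l_1}(W))=A_{\rm BM,abs}(\b C_{l_2}(W))$ leaves only the anomaly contribution of the outer boundary. In the odd case this completes the reduction of the ``global'' sources since $\chi(W^{(2p-1)})=0$, while in the even case the $l$-independent quantities $B_1^{(2p)}$ and $B_2^{(2p)}=\tfrac{1}{4}\chi(W)\log 2$ may be freely transferred to $l=l_2$, and the residual Euler contribution collects to $\tfrac{1}{2}\chi(W)\log 2-\tfrac{1}{4}\chi(W)\log 2-B_2^{(2p)}=-\tfrac{1}{2}\chi(W)\log 2$, producing the explicit Euler term in the statement.

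Next I would apply the preceding lemma to rewrite $\log\frac{{\rm Det}\ddot\alphas_F}{{\rm Det}\alphas}$ as $\log\frac{{\rm Det}^{[(m-1)/2]}_0\ddot\alphas_{C_{l_2}}}{{\rm Det}^{[(m-1)/2]}_0\alphas}$ plus the three explicit divergent/constant contributions listed there, namely a $\frac{(-1)^p}{2}r_p\log\log\frac{1}{l_1}$ term (present only in the odd middle-degree case), a $\log l_1$-term $\frac{1}{2}\sum_{q=0}^{p-2}(-1)^q r_q\log l_1^{2p-2-2q}$, and the combinatorial constant $\frac{1}{2}\sum_{q=0}^{p-2}(-1)^q r_q\log(2p-2-2q)$. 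Using the explicit product formula $\log\frac{{\rm Det}^{p-1}_0\ddot\alphas_{C_{l_1}}}{{\rm Det}^{p-1}_0\alphas}=\frac{1}{2}\sum_{q=0}^{p-1}(-1)^q r_q\log\frac{l_1^{m+1-2q}}{m+1-2q}$, I would then extract the coefficients of $\log l_1$. Those coefficients reassemble, term by term in $q$, into the claimed $\sum_{q=0}^{p-1}(-1)^q r_q\log l_1^{2p-1-2q}$ in the odd case, and (together with the explicit $+\sum_{q=0}^{p-1}(-1)^q r_q\log l_1$ present in $\log T_-(C_{l_1}(W^{(2p)}))$) cancel completely in the even case. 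The $\log\log(1/l_1)$ coefficient is delivered directly by the lemma.

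What remains is the constant part: combining $\log\frac{{\rm Det}^{[(m-1)/2]}_0\ddot\alphas_{C_{l_2}}}{{\rm Det}^{[(m-1)/2]}_0\alphas}$ (from the lemma) with $\tfrac{1}{2}\log T(W,g)+A_{\rm BM,abs}(\b C_{l_2}(W))$ yields exactly $\log T_{\rm abs}(C_{l_2}(W))$ in the odd case and $\log T_{\rm abs,ideal}(C_{l_2}(W))-\bigl(\tfrac{1}{4}\chi(W)\log 2+B_1+B_2\bigr)$ in the even case, by Theorem \ref{t1} applied at $l=l_2$. The leftover constants produced by the two expansions (the factors $\log(2p-2-2q)$, $\log(m+1-2q)$, and the sums $-\sum(-1)^q r_q\log\tfrac{p-q}{p-q-1}$ in the odd case, respectively $-\tfrac{1}{2}\sum(-1)^q r_q\log(2p-2q-1)(2p-2q+1)$ in the even case appearing in Proposition \ref{neg1}) are designed precisely to cancel among themselves: the algebraic identities $\log\tfrac{p-q}{p-q-1}=\log(2p-2q)-\log(2p-2-2q)$ and $\log(2p-2q-1)(2p-2q+1)-2\log(2p-2q-1)!!$ telescope the combinatorial part to zero modulo the Euler remainder already accounted for above.

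The main obstacle I anticipate is the bookkeeping of these constant contributions, in particular in the even case where the ideal-BC anomaly $B_1^{(2p)}$, the double factorial and harmonic combinatorial factors, and the $\frac{1}{4}\chi(W)\log 2$ terms must all recombine consistently; the most efficient way is to group the terms in each expansion by their scaling class (double log, log, constant) and, within the constant class, by the harmonic degree $q$, so that the Poincar\'e-duality identities $r_q=r_{m-q}$ and $\alpha_{m-q}=-\alpha_{q-1}$ can be applied symmetrically and produce the claimed finite part.
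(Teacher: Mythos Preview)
Your approach is essentially the paper's own: you substitute Theorem \ref{t1} for the frustum, Proposition \ref{neg1} for $\log T_-$, the preceding lemma for the small-$l_1$ expansion of $\log\frac{{\rm Det}\ddot\alphas_F}{{\rm Det}\alphas}$, and the remark splitting $A_{\rm BM,abs}(\b C_{[l_1,l_2]}(W))$ into the two boundary pieces, then sort the result into the $\log\log(1/l_1)$, $\log l_1$, and constant classes. One small slip: your displayed Euler bookkeeping $\tfrac{1}{2}\chi(W)\log 2-\tfrac{1}{4}\chi(W)\log 2-B_2^{(2p)}$ actually equals $0$, not $-\tfrac{1}{2}\chi(W)\log 2$; the $-\tfrac{1}{2}\chi(W)\log 2$ in the statement appears only after you further subtract the Euler content $\tfrac{1}{4}\chi(W)\log 2+B_2^{(2p)}$ contained in $\log T_{\rm abs,ideal}(C_{l_2}(W))$ via Theorem \ref{t1}, exactly as you do in the following paragraph.
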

\begin{proof} Using the expansion in the previous lemma and the formulas in Theorem \ref{t1} we compute, in the odd case $m=2p-1$,
\begin{align*}
\log T_{\rm abs}(C_{[l_1,l_2]}(W))-\log  T_-(C_{l_1}(W))=&\log T(W,g)+\log \frac{{\rm Det} \ddot\alphas_F}{{\rm Det} \alphas}+ A_{\rm BS,abs}(\b C_{[l_1,l_2]}(W))\\
&-\frac{1}{2} \log T(W,g)+\log \frac{{\rm Det}^{p-1}_{0} \ddot\alphas_{C_{l_1}}}{{\rm Det}^{p-1}_{0,l_1} \alphas}
-A_{\rm BS,abs}(\b C_{l_1}(W))\\
&+\sum_{q=0}^{p-2}(-1)^{q}r_q\log\frac{p-q}{p-q-1}\\
=& \frac{(-1)^p}{2} r_p \log\log\frac{1}{l_1} +\sum_{q=0}^{p-1} (-1)^q r_q \log l_1^{2p-1-2q}\\
&+\frac{1}{2} \log T(W,g)+ \log \frac{{\rm Det}^{p-1}_0 \ddot\alphas_{C_{l_2}}}{{\rm Det}^{p-1}_0 \alphas}\\
&+A_{\rm BS,abs}(\b C_{l_2}(W))+O(l_1).
\end{align*}

In the even case $m=2p$,
\begin{align*}
\log T_{\rm abs}(C_{[l_1,l_2]}(W))-\log  T_-(C_{l_1}(W))=&\log \frac{{\rm Det} \ddot\alphas_F}{{\rm Det} \alphas}+\frac{1}{2}\chi(W)\log 2+A_{\rm BS,abs}(\b C_{[l_1,l_2]}(W))\\
&+\log \frac{{\rm Det}^{p-1}_0 \ddot\alphas_{C{l_1}}}{{\rm Det}^{p-1}_0 \alphas}-\frac{1}{4}\chi(W)\log 2-A_{\rm BS,abs}(\b C_{l_1}(W))\\
&+B^{(2p)}_1(C_{l_1}(W))-B^{(2p)}_2(C_{l_1}(W))-\sum_{q=0}^{p-1}(-1)^{q}r_q\log l_1\\
&+\frac{1}{2}\sum_{q=0}^{p-1}(-1)^{q}r_q\log (2p-2q-1)(2p-2q+1)\\
=&\log \frac{{\rm Det}^{p-1}_0 \ddot\alphas_{C_{l_2}}}{{\rm Det}^{p-1}_0 \alphas}+\frac{1}{4}(-1)^p r_p \log l_2\\
&+A_{\rm BS,abs}(\b C_{[l_1,l_2]}(W))-A_{\rm BS,abs}(\b C_{l_1}(W))\\
&+B^{(2p)}_1(C_{l_2}(W))+O(l_1).
\end{align*}
\end{proof}

\begin{theo}\label{t2}
\begin{align*}
\Rz_{l_1=0} \left(\log T_{\rm abs}(C_{[l_1,l_2]}(W))-\log  T_-(C_{l_1}(W))\right)=& \log T_{\rm abs, ideal}(C_{l_2}(W))-\frac{1}{2}\chi(W) \log 2.
\end{align*}
\end{theo}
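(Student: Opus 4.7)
The plan is to deduce Theorem \ref{t2} as an immediate corollary of the small-$l_1$ expansions already established in Proposition \ref{neg2}, by extracting the finite part in each of the two parity cases and checking that the two cases assemble into a single uniform formula.

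First I would dispose of the even case $m=2p$. Proposition \ref{neg2} already gives
\[
\log T_{\rm abs}(C_{[l_1,l_2]}(W))-\log T_-(C_{l_1}(W))=\log T_{\rm abs,ideal}(C_{l_2}(W))-\tfrac{1}{2}\chi(W)\log 2 + O(l_1),
\]
which contains no divergent term as $l_1\to 0^+$, so the finite part $\Rz_{l_1=0}$ equals the stated limit and the theorem is proved in this case.

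Second I would treat the odd case $m=2p-1$. Here Proposition \ref{neg2} gives
\[
\log T_{\rm abs}(C_{[l_1,l_2]}(W))-\log T_-(C_{l_1}(W))=\frac{(-1)^p}{2}r_p\log\log\frac{1}{l_1}+\sum_{q=0}^{p-1}(-1)^q r_q\log l_1^{2p-1-2q}+\log T_{\rm abs}(C_{l_2}(W))+O(l_1).
\]
All terms involving $\log l_1$ or $\log\log(1/l_1)$ diverge as $l_1\to 0^+$ and belong to the singular part; discarding them leaves the finite part $\log T_{\rm abs}(C_{l_2}(W))$. To match the right-hand side of the theorem it suffices to observe that, since $W$ has odd dimension $2p-1$, Poincar\'e duality forces $\chi(W)=0$, and the Cheeger ideal boundary conditions coincide with the usual absolute conditions (ideal BC only arise when the section has even dimension, cf.\ Remark \ref{ideal}), so $\log T_{\rm abs,ideal}(C_{l_2}(W))-\tfrac{1}{2}\chi(W)\log 2=\log T_{\rm abs}(C_{l_2}(W))$.

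Since the two cases yield exactly the uniform formula stated in the theorem, this completes the proof. The only subtlety is really bookkeeping: one must verify that the divergent terms in the odd expansion are genuinely the singular part in the sense of $\Rz_{l_1=0}$ (i.e.\ they form an asymptotic expansion in an appropriate scale including $\log\log(1/l_1)$ and powers of $\log l_1$ from which the finite part can be unambiguously extracted). This is clear from the explicit form of the expansion in Proposition \ref{neg2}, and hence no additional analytic work is required beyond what has already been carried out in Sections \ref{calc2}--\ref{regularpart} and the preceding propositions.
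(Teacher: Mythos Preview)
Your proof is correct and follows exactly the approach intended by the paper: Theorem \ref{t2} is stated immediately after Proposition \ref{neg2} without a separate proof precisely because it is meant to be read off from the expansions there, and your parity-by-parity extraction of the finite part (together with the observations that $\chi(W)=0$ and that ideal BC reduce to absolute BC when $\dim W$ is odd) is exactly how the deduction goes.
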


As announced in the introduction, we reobtained the formula for the torsion written in Theorem \ref{t0} (or better \ref{t1}) as a limit of a regularisation of the torsion of the frustum (extending a result obtained in \cite{HS3} for the odd case $m=2p-1$). Beside the intrinsic interest of this result, this also shows that the anomaly term $B_1$ appearing in the formula for the analytic torsion of the cone (compare Theorem \ref{t1}) is due to the fact that in this approach to the problem of extending the definition of Ray and Singer of analytic torsion to spaces with conical singularities, a set of eigenfunctions of the Laplace Beltrami operator are missed (those that are not square integrable near the tip of the cone). As a consequence, the spectrum  changes, and the lost part of the spectrum is exactly the one that produces a counter term to the anomaly term $B_1$ in the analytic torsion (the term we called the negative part of the torsion). Due to the symmetry of the problem, that depends on the parity of the dimension, this cancelation happens  in odd dimension, but does not happens  in even dimension. This emerges clearly by comparison  of the formulas given in this section for the different terms composing the negative torsion with the formulas for the corresponding  terms composing  the regular (positive) torsion and given in the previous sections.

\appendix

\section{Formulas for the zeta function of the Hodge-Laplace oeprator}

Decomposing the zeta function of the Hodge-Laplace operator $\Delta$ on an $m$-dimensional oriented compact connected Riemannian manifold $(W,g)$, we have that 
\[
\zeta(s,\Delta^{(q)})=\zeta_{\rm ex}(s,\Delta^{(q)})+\zeta_{\rm cex}(s,\Delta^{(q)}).
\]

When $m=2p-1$ is odd, using duality, this gives
\beq\label{Todd}
\log T(W,g)=\sum_{q=0}^{p-2}(-1)^{q+1} \zeta'_{\rm cex}(0,\Delta^{(q)})+\frac{1}{2} (-1)^p \zeta'_{\rm cex}(0,\Delta^{(p-1)}).
\eeq

Moreover, observing that
\beq
\label{peppo}
\zeta_{\rm cex}(s,\Delta^{(q)})=(-1)^q\sum_{k=0}^q (-1)^k \zeta(s,\Delta^{(k)}),
\eeq
and that
\beq\label{zetazero}
\zeta(0,\Delta^{(q)})=-\dim\ker \Delta^{(q)},
\eeq
using duality we have 
\beq\label{forfor}
\begin{aligned}
\sum_{q=0}^{p-2} (-1)^q \zeta_{\rm cex}(0, \Delta^{(q)})+\frac{1}{2}(-1)^{p-1}\zeta_{\rm cex}(0,\Delta^{(p-1)})
&=-\frac{1}{2}\sum_{q=0}^{p-1} (-1)^q (2p-1-2q)\rk H_q(W).
\end{aligned}
\eeq


\begin{thebibliography}{99}


\bibitem{BM1} J. Br\"uning and Xiaonan Ma, {\em An anomaly formula for Ray-Singer metrics on manifolds with boundary}, GAFA 16 (2006) 767-837.

\bibitem{BM2} J. Br\"uning and Xiaonan Ma, {\em On the gluing formula for the analytic torsion}, Math. Z. 273 (2013) 1085-1117.

\bibitem{BS2} J. Br\"uning and R. Seeley, {\em The resolvent expansion for second order regular singular operators}, J. of Funct. An. 73 (1988) 369-415.

\bibitem{Che1} J. Cheeger, {\em Analytic torsion and the heat equation}, Ann. Math. 109 (1979) 259-322.

\bibitem{Che2} J. Cheeger, {\em Spectral geometry of singular Riemannian spaces}, J. Diff. Geom. 18 (1983) 575-657.

\bibitem{Che3} J. Cheeger, {\em On the Hodge theory of Riemannian pseudomanifolds}, Proc. Sympos. Pure Math. 36 (1980) 91-146.

\bibitem{Har} L. Hartmann, {\em The boundary term from the Analytic Torsion of a cone over a $m$-dimensional sphere},  Mat. Contemp. 43 (2014) 133-170.

\bibitem{HMS} L. Hartmann, T. de Melo and M. Spreafico, {\em The Analytic Torsion of a Disc}, Ann. Glob. Anal. Geom. 42 (2012) 29-59.

\bibitem{HS1} L. Hartmann and M. Spreafico, {\em The analytic torsion of a cone over a sphere}, J. Math. Pure Ap. 93 (2010) 408-435.

\bibitem{HS2} L. Hartmann and M. Spreafico, {\it The Analytic Torsion of the Cone over an Odd Dimensional Manifold},  J. Geom. Phys. 61 (2011) 624-657.

\bibitem{HS3}  L. Hartmann and M. Spreafico,  {\it R torsion and analytic torsion of a conical frustum}, J. G\"{o}kova Geom. Topology 6 (2012) 28-57.

\bibitem{HS4} L. Hartmann and M. Spreafico, {\em An extension of the Cheeger M\"{u}ller theorem for a cone} to appear in St. Petersburg Math. Journal. (2016).

\bibitem{Luc} W. L\"uck,  \emph{Analytic and topological torsion for manifolds with boundary and symmetry},  J. Diff. Geom.  37   (1993) 263-322.

\bibitem{Mil} J. Milnor, {\em Whitehead torsion}, Bull. AMS 72 (1966) 358-426.

\bibitem{Mul} W. M\"{u}ller, {\em Analytic torsion and R-torsion of Riemannian manifolds}, Adv. Math. 28 (1978) 233-305.

\bibitem{MV} W. M\"{u}ller and B. Vertman, {\em The Metric Anomaly of Analytic Torsion on Manifolds with Conical Singularities}, Comm. PDE. 39 (2014) 146-191.

\bibitem{Olv} F.W.J. Olver, {\em Asymptotics and special functions}, AKP, 1997.

\bibitem{RS} D.B. Ray and I.M. Singer, {\em R-torsion and the Laplacian on Riemannian manifolds}, Adv. Math. 7 (1971) 145-210.

\bibitem{Spr1} M. Spreafico, {\em On the non homogeneous quadratic Bessel zeta function}, Mathematika 51 (2004) 123-130.

\bibitem{Spr3} M. Spreafico, {\em Zeta function and regularized determinant on a disc and on a cone}, J. Geom. Phys. 54 (2005) 355-371.

\bibitem{Spr4} M. Spreafico, {\em Zeta invariants for sequences of spectral type, special functions and the Lerch formula}, Proc. Roy. Soc. Edinburgh 136A (2006) 863-887.

\bibitem{Spr9} M. Spreafico, {\em Zeta determinant for double sequences of spectral type},  Proc. Amer. Math. Soc. 140 (2012) 1881-1896.

\bibitem{Wat} G.N. Watson, {\em A treatise on the theory of Bessel functions}, Cambridge University Press, 1922.

\end{thebibliography}
\end{document}